\documentclass[a4paper,11pt,reqno]{article}
 \usepackage[english]{babel}
  \usepackage{amsmath,amsfonts,amssymb,amsthm,bbm}
   \usepackage{graphics,epsfig,psfrag} %%add this and next lines if pictures
   \usepackage{paralist,subfigure}
   \usepackage[dvipsnames]{xcolor}
   \usepackage{hyperref} %%Warning: when you first run your tex
    \usepackage[active]{srcltx} %%allows you to jump from your editor
    \usepackage{color,soul} %%allows you to use the \st{...} command, which
    \usepackage[utf8]{inputenc}
\usepackage[T1]{fontenc}
    \usepackage{authblk}
    \usepackage{esint}
    \usepackage{comment}
\usepackage{graphicx}
\usepackage{subcaption}
\usepackage{float}

\newtheorem{theorem}{Theorem}[section]
\newtheorem*{theorem*}{Theorem}
\newtheorem{corollary}[theorem]{Corollary}
\newtheorem{lemma}[theorem]{Lemma}
\newtheorem{prop}[theorem]{Proposition}
\newtheorem{conj}[theorem]{Conjecture}

\theoremstyle{definition}

\newtheorem{remark}{Remark}

\def\N{\mathbb{N}}
\def\Z{\mathbb{Z}}

\def\R{\mathbb{R}}

\let\e=\varepsilon

\let\ol=\overline
\let\ul=\underline
\let\.=\cdot
\let\0=\emptyset

\def\1{\mathbbm{1}}

\newenvironment{formula}[1]{\begin{equation}\label{#1}}
                       {\end{equation}\noindent}

\def\Fi#1{\begin{formula}{#1}}
\def\Ff{\end{formula}\noindent}

\title{\bf Propagation of opinions on a network. \\
Convergence, spreading speed and patterns in the Daley-Kendall model.}

\author{Romain \textsc{Ducasse}}
\affil{Université Paris Cité, CNRS, Sorbonne Université, Laboratoire Jacques-Louis Lions (LJLL), F-
75006 Paris, France}
\date{}
\begin{document}

\maketitle

\noindent {\textbf{Keywords:} Propagation of opinions, Propagation on graphs, Spreading speed, Daley-Kendall model, Patterns.} \\

\noindent {\textbf{MSC:} 34A34, 34C60, 91D25, 92D30, 05C82.}
\begin{abstract}
We study a social contagion model describing the propagation of opinions across a network. Our model is a generalization of a model introduced by Daley and Kendall \cite{DaleyKendall}, and consists of a system of coupled differential equations set on the nodes of an infinite graph.

The modeling is inspired by SIR-type models from epidemiology. The key idea is that individuals can transmit their opinion to their neighbors (as in epidemiological models), until they perceive that the opinion is already known in their neighborhood.

After introducing the model, we analyze the long-time behavior of its solutions. We prove that the solutions converge and we study their asymptotic equilibrium. We also establish estimates on the speed of propagation of the solutions through the network. 

These results give estimates on the rate of adoption of the opinion and on its velocity in the social network.

We also provide simulations that indicate that this model can give rise to patterns: when the individuals take into account individuals that are ``far enough'' in estimating  whether or not the opinion is already known, then we observe the emergence of localized ``bubbles'' where the opinion is adopted.
\end{abstract}

%-------------------------------------------------------------------------------

%\newpage

\section{Introduction}

\subsection{Propagation of opinions.}

The concept of {\em social contagion} states that sociocultural phenomena (such as ideas, opinions, rumors, fake news, etc) can spread through a population {\em similarly to} outbreaks of diseases. This concept was first developed by social psychologists in the late $19$th century, in particular in the works of Tarde \cite{tarde1890lois} or Baldwin \cite{Baldwin}, and found its first empirical confirmations starting in the $1950$s. We refer to \cite{MarsdenSocialContagion} and the references therein for more details on this topic.\\

On the other hand, since the pioneering works of Kermack and McKendrick \cite{KermackContributions91} in $1927$, who introduced the celebrated SIR model to describe the spread of a disease in a population, the mathematical study of epidemiological models has been widely developed.\\

This led several authors to adapt epidemiological models in order to build new models for the spread of sociocultural phenomena. For instance, \cite{BettencourtPower06} studies the spread of the adoption of the technique of Feynman diagrams by physicists, while \cite{Bonnasse-GahotEpidemiological18} studies the spread of riots in 2005 in France, and \cite{BassNew69} studies the adoption of a new product by consumers. All these works use almost directly the original SIR model of Kermack and McKendrick. 

In $1964$, Daley and Kendall introduced in \cite{DaleyKendall} a new model to describe the spread of rumors. Although their model is inspired by the SIR model, it includes a new effect, the {\em stifling}. In the Daley-Kendall model, the individuals can transmit their opinion (following the same procedure as in the SIR model) but cease to transmit when they think that the opinion is already sufficiently known (when it is not ``new enough''). This {\em stifling} process differs strongly from what can be found in standard epidemiological models (where the individuals can not actively choose to stop being infectious).\\

In the present paper, we study how opinions spread through a social network. To do so, we introduce and study a natural generalization of the Daley-Kendall model on a graph. The model we consider, stated as \eqref{syst} below, consists of a system of ODEs defined on the nodes of an infinite graph. We study the long-time behavior of the solutions. Doing so, we will obtain estimates on the fraction of population that eventually adopts the new opinion. We also show that the opinion propagates in a wave-like fashion and we give some estimates on its speed of propagation.

Finally, we provide some simulations that indicate that the stifling mechanism can lead, in some cases, to the emergence of {\em patterns}. From a sociological point of view, this could account for the emergence of ``bubbles'' of opinions.

\subsection{The Daley-Kendall model}

Before introducing our model, let us present here the original Daley-Kendall model, introduced in \cite{DaleyKendall}. The Daley-Kendall model is a compartmental model. The population is divided into three groups (compartments): the {\em ignorants} (who have not adopted the opinion), the {\em spreaders} (who adopted the opinion and are spreading it) and the {\em stiflers} (who adopted the opinion but stopped spreading it).

The opinion is propagated through the population by pair-wise contacts between spreaders and ignorants, following a law of mass-action. Any spreader involved in a pair-wise meeting with an ignorant can ``infect'' the ignorant with the opinion, turning them into a new spreader. This process is similar to the process of {\em contamination} in epidemiological models. 

On the other hand, when a spreader meets someone who is already aware of the opinion (that is, either another spreader or a stifler), then the spreader becomes aware that the opinion is ``known'' and can decide to stop spreading it, turning into a stifler. Such process does not exist in epidemiological models: usually, the infectious individuals (who are the equivalent of the spreaders) automatically recover after some time.\\

Denoting $S,I,R$ the number of ignorants, spreaders and stiflers respectively in the population\footnote{We use the letter $S,I,R$ as a reference to the epidemiological setting, where the compartment $S$ is the compartment of the susceptible, who have not been infected, $I$ is for the infectious and $R$ for the recovered. }, the original Daley-Kendall model takes the form of the following system of ODEs
\begin{equation}\label{DK ODE}
    \left\{
\begin{array}{rll}
    S'(t) &= -\alpha S(t)I(t),\quad &t>0, \\
      I'(t) &=  \alpha S(t)I(t) - \beta I(t)(I(t)+R(t)),\quad &t>0,\\
      R'(t) &=\beta I(t)(I(t)+R(t)),\quad &t>0.
\end{array}
    \right.
\end{equation}
Here, $\alpha>0$ is the rate of adoption of the opinion when an encounter between an ignorant and a spreader occurs, while $\beta>0$ is the rate at which a spreader ceases to spread the opinion when encountering other people already aware of the opinion.

This model has to be complemented with initial data
$$
S(t=0)=S_0,\quad I(t=0)=I_0,\quad R(t=0)=0.
$$
This initial data represent the initial state of the population. It is classical to assume that $R(t=0)=0$, that is, at the initial time, the opinion is ``new'', only some individuals have adopted it and there are no stiflers yet. The question is then to determine the conditions under which the opinion can spread to the rest of the population. Observe that the total population $P:=S(t)+I(t)+R(t)$ is constant over time, there is no death or birth. Therefore, the total population is always equal to $S_0+I_0$.\\

The main result of Daley and Kendall \cite{DaleyKendall} is that, no matter how small $I_0>0$, there is a proportion of the population, strictly positive but strictly less than $1$, that will eventually adopt the opinion. This proportion is ``almost'' independent of the initial population. More precisely, their result can be stated as follows:
 \begin{theorem*}[Daley-Kendall (1965)]
   Let $(S,I,R)$ be the solution of \eqref{DK ODE} arising from the initial datum $(S_0,I_0,0)$, where $S_0,I_0>0$.\\

  We have
   $$
   S(t)\underset{t\to+\infty}{\longrightarrow} S_0e^{-\alpha \theta},\quad I(t)\underset{t\to+\infty}{\longrightarrow}0,\quad R(t)\underset{t\to+\infty}{\longrightarrow} S_0(1-e^{-\alpha \theta}) +I_0,
   $$
   where $\theta>0$ is the unique positive solution of the equation
   $$
   \left(\frac{\alpha+\beta}{\alpha}\right)S_0(1-e^{-\alpha \theta}) + I_0= \beta(S_0+I_0)\theta.
   $$
 \end{theorem*}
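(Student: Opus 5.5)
The plan is to reduce the three-dimensional system \eqref{DK ODE} to a single scalar autonomous ODE for the cumulative number of spreaders
$$
\theta(t):=\int_0^t I(s)\,ds .
$$
First I will settle well-posedness. The right-hand side of \eqref{DK ODE} is polynomial, so a local solution exists and is unique. Each equation has the form $S'=S\cdot(\cdots)$ and $I'=I\cdot(\cdots)$, so $S>0$ and $I>0$ as long as the solution exists. Then $R'\ge 0$ gives $R\ge 0$. Summing the equations shows that $P=S+I+R=S_0+I_0$ is constant. All three components therefore lie in $[0,P]$, which rules out blow-up, so the solution is global.

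Next I integrate the equations explicitly. From $S'=-\alpha S I$ I get $S(t)=S_0e^{-\alpha\theta(t)}$. Using $I+R=P-S$, the second equation becomes
$$
I'=-S'-\beta P I+\beta IS .
$$
Since $IS=-S'/\alpha$, integrating from $0$ to $t$ gives
$$
I(t)=I_0+\Big(1+\frac{\beta}{\alpha}\Big)\big(S_0-S(t)\big)-\beta P\theta(t).
$$
Substituting the formula for $S$ yields $I(t)=F(\theta(t))$, where
$$
F(x):=\Big(\frac{\alpha+\beta}{\alpha}\Big)S_0\big(1-e^{-\alpha x}\big)+I_0-\beta(S_0+I_0)x .
$$
Hence $\theta$ solves the autonomous scalar problem $\theta'=F(\theta)$ with $\theta(0)=0$.

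Then I analyze $F$. It is strictly concave, since its second derivative is $-(\alpha+\beta)\alpha S_0e^{-\alpha x}<0$. It satisfies $F(0)=I_0>0$ and $F(x)\to-\infty$ as $x\to+\infty$. A strictly concave function that is positive at $0$ and tends to $-\infty$ has exactly one zero on $(0,\infty)$, and this zero is the $\theta>0$ of the statement; I call it $\theta^*$. Moreover $F>0$ on $[0,\theta^*)$.

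Now I run the one-dimensional dynamics. Because $\theta^*$ is an equilibrium of the scalar ODE, uniqueness forbids the solution starting at $0<\theta^*$ from reaching it in finite time. So $\theta(t)\in[0,\theta^*)$ for all $t$, and $\theta$ is increasing. It therefore converges to some limit $\ell\le\theta^*$, and consequently $\theta'=F(\theta)\to F(\ell)$. If $F(\ell)>0$, then $\theta$ would grow at least linearly in the long run and would be unbounded, which is a contradiction. Hence $F(\ell)=0$, and since $F>0$ on $[0,\theta^*)$ this forces $\ell=\theta^*$. It follows that $I=F(\theta)\to0$, $S\to S_0e^{-\alpha\theta^*}$, and $R=P-S-I\to S_0(1-e^{-\alpha\theta^*})+I_0$, which is exactly the statement.

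The only delicate points are the algebraic identity expressing $I$ as a function of $\theta$ (in particular, using $IS=-S'/\alpha$ to integrate the stifling term) and the uniqueness of the positive root. The uniqueness is handled by concavity, so I expect no real obstacle.
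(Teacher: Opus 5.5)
Your proof is correct. It is essentially the paper's method: the paper quotes the Daley--Kendall theorem without proof, but its force-of-infection reduction (Lemma~\ref{lemma un}, equation \eqref{eq u loc}), applied to a single node with $u=\theta$, gives exactly your scalar equation $\theta'=F(\theta)$, because there the memory term integrates exactly, $\int_0^t \theta'(\tau)\bigl(1-e^{-\alpha\theta(\tau)}\bigr)d\tau=\theta(t)-\frac{1}{\alpha}\bigl(1-e^{-\alpha\theta(t)}\bigr)$. On the network this integral can only be bounded above and below, which is why the paper obtains the two-sided bounds of Proposition~\ref{prop cv} rather than an exact equation for the limit.
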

Let us comment on this result. We can define the {\em rate of adoption} of the opinion as the proportion of individuals that have adopted the opinion (that is, the ratio of spreaders and stiflers over the total population)
$$
r(t) = \frac{I(t)+R(t)}{S_0+I_0} = 1 - \frac{S(t)}{S_0+I_0}.
$$
Then, we have that $r(t)\underset{t\to+\infty}{\longrightarrow} 1-\frac{S_0}{S_0+I_0}e^{-\alpha \theta} \in ]0,1[$. This means that there is a strictly positive proportion of the population that eventually adopts the opinion, and this proportion is strictly smaller than $1$, so the adoption is never total.

When $I_0$ goes to zero, then $\theta$ converges to $\theta_0$, the only strictly positive solution of the equation $\left(\frac{\alpha+\beta}{\alpha}\right)(1-e^{-\alpha \theta_0}) = \beta\theta_0$, which is independent of $S_0$, and the rate of adoption converges to $1 - e^{-\alpha \theta_0}$, which is then also independent of the size of the initial population.\\

In their paper, Daley and Kendall emphasize the differences between their result and the standard results concerning the usual SIR model from epidemiology. First, in the SIR model (and in most models in epidemiology), there is a {\em threshold} above which the epidemic does not propagate (more precisely, there is a quantity, called the {\em basic reproduction number}, that can be computed from $S_0$ and the parameters of the model, which needs to be strictly greater than $1$ for the disease to propagate, see \cite{murray2011mathematical} for instance). In the Daley-Kendall model, this is not the case, the opinion always propagate. We say that the Daley-Kendall model shows a {\em hair-trigger effect}: even a vanishingly small amount of ``infected'' will lead to a spread of the opinion in the population.

Daley and Kendall also emphasize that the proportion of the population that eventually adopts the opinion is independent of $S_0$ in the limit where $I_0$ goes to zero (it is equal to $e^{-\alpha \theta_0}$). This is also very different from what happens in epidemiology, where the proportion of infected individuals goes to zero when $S_0$ is very large (for instance, high densities area have usually more contamination {\em per capita} than less densely populated areas).

\subsection{The Daley-Kendall model on a network}\label{sec DK net}

We consider in this paper a natural extension of the Daley-Kendall model set on a network. More precisely, we consider a (countably infinite) family of communities. For notational simplicity, we assume  that they are indexed by the integers $n\in\N$. Each community has its own number of ignorant, spreaders and stiflers, denoted by $S_n(t),I_n(t),R_n(t)$ respectively. The key idea of the model is that the individuals in a community can be influenced not only by members of the community but also by members of neighbors communities.\\

We represent the interactions between different communities with a graph. Let $G$ be a countably infinite\footnote{All the analysis provided in this paper also applies to finite graphs, but the case of infinite graph is more interesting when studying the speed of propagation of the opinion.} graph whose nodes are indexed by the integers $\N$. The node $n$ will represent the community $n$. We shall denote $\sim$ the adjacency relation: $i\sim n$ means that the nodes $i$ and $n$ are neighbors. By convention, we also always assume that $n\sim n$ for all $n\in\N$.\\

For each couple $i,n\in\N$ such that $i\sim n$, we associate a weight $\alpha_{i,n}>0$ that will represent the strength of the influence of the community $i$ toward the community $n$. These need not be symmetric: a priori, $\alpha_{i,n}\neq \alpha_{n,i}$. For notational simplicity, we define $\alpha_{i,n}=0$ when $i\not\sim n$.

 In the epidemiological setting, $\alpha_{i,n}$ would be a {\em rate of contagion}.\\

We also consider a family of non-negative numbers $(\beta_{i,n})_{(i,n)\in \N^2}$, that represents the rate of {\em stifling} for individuals located at the position $n$, induced by observing the position $i$. We mention that $\beta_{i,n}$ could be strictly positive even if the nodes $i,n$ are not adjacent. This could reflect the fact that individuals can observe the presence of an opinion in the population without direct interaction, but through various communication channels (such as internet, media, etc).\\

Our model is the following system of ODEs:
\begin{equation}\label{syst or}
\left\{
\begin{array}{rll}
     S'_n(t) &= - S_n(t)\left(\sum_i \alpha_{i,n}I_i(t) \right),\quad &t>0,\ n\in \N,\\
    I'_n(t) &=  S_n(t)\left(\sum_i \alpha_{i,n}I_i(t) \right) - I_n(t)\left(\sum_i \beta_{i,n}(I_i(t)+R_i(t)) \right),\quad &t>0,\ n\in \N,\\
    R'_n(t) &=   I_n(t)\left(\sum_i \beta_{i,n}(I_i(t)+R_i(t)) \right),\quad &t>0,\ n\in \N,
\end{array}
\right.
\end{equation}
completed with initial data
$$
S_n(t=0)=S_n^0,\ I_n(t=0)=I_n^0,\ R_n(t=0)=0.
$$
Observe that the term $\sum_i \alpha_{i,n}I_i$ could be rewritten $\sum_{i\sim n} \alpha_{i,n}I_i$. This is not the case for the sum of the stifling terms.\\

Because we have, for all $n$, that $(S_n(t)+I_n(t)+R_n(t))'=0$, the total population at each node is constant equal to $S_n^0+I_n^0$. From now on, we call $P_n := S_n^0+I_n^0$ the total population at node $n$. We can rewrite the system \eqref{syst or} as
\begin{equation}\label{syst}
\left\{
\begin{array}{rll}
     S'_n(t) &= - S_n(t)\left(\sum_i \alpha_{i,n}I_i(t) \right),\quad &t>0,\ n\in \N,\\
    I'_n(t) &=  S_n(t)\left(\sum_i \alpha_{i,n}I_i(t) \right) - I_n(t)\left(\sum_i \beta_{i,n}(P_i - S_i(t)) \right),\quad &t>0,\ n\in \N.
\end{array}
\right.
\end{equation}
{\bf Local stifling}. We will pay a particular attention to the case where the stifling is {\em purely local}, that is, when the stifling parameters $(\beta_{i,n})_{(i,n)\in\N^2}$ are such that there are $\beta_n>0$ such that
\begin{equation}\label{beta local}
\beta_{i,n} = \begin{cases}
    \beta_n \quad \text{if} \ i=n,\\
    0 \quad \quad \text{else}. 
\end{cases}
\end{equation}
In this case, the model rewrites
\begin{equation}\label{syst loc}
\left\{
\begin{array}{rll}
    S_n'(t) &= - S_n(t)\left(\sum_i \alpha_{i,n}I_i(t) \right),\quad &t>0,\ n\in \N,\\
    I_n'(t) &=  S_n(t)\left(\sum_i \alpha_{i,n}I_i(t) \right) - \beta_n I_n(t)(P_n - S_n(t)) ,\quad &t>0,\ n\in \N.
\end{array}
\right.
\end{equation}
In the local stifling case, the individuals evaluate whether to cease to propagate the opinion depending only on how much it is known in their own community.\\

The goal of the present paper is to understand {\em how much} and {\em how fast} a new opinion is adopted in the network. To do so, it is convenient to consider the rate of adoption of the opinion at node $n$, 
\begin{equation}\label{rate}
r_n(t) := \frac{I_n(t)+R_n(t)}{P_n} = 1 - \frac{S_n(t)}{P_n}.
\end{equation}
Here, $r_n(t)\in [0,1]$ is the proportion of individuals that have adopted the opinion at time $t$ and at node $n$. When $r_n(t)=0$, the opinion is completely ignored by the community (at time $t$), while the larger $r_n(t)$, the more adopted the opinion is.\\

The questions that we discuss in this paper are the following:\\

{\bf Do we have propagation of the opinion ?} At the initial time, the opinion is present only in a finite number of nodes: we have $r_n(0)=0$ except for a finite amount of nodes. Does the opinion eventually (when $t\to+\infty$) reach all the nodes of the network ? In other words, denoting $r_n^\infty := \lim_{t\to +\infty} r_n(t)$, can we find $\ul r>0$, independent of $n$, such that $r_n^\infty \geq \ul r$ ? The answer is yes, we shall prove the convergence of $r_n(t)$ and give estimates on its limit values.\\

{\bf What is the speed of spreading of the opinion on the network ?} Once that we know that the opinion spreads, can we estimate the time it would take for an opinion to reach some node $n$ ? More precisely, if we know that $\lim_{t\to + \infty} r_n(t) \geq \ul r>0$, can we estimate the smallest $T_n>0$ for which we will have $r_n(T_n) \geq \frac{\ul r}{2}$ ? We will show that, for nodes $n$ far away from the initial focus of presence of the opinion, we will have $\frac{T_n}{n} \in [\frac{1}{c^\star},\frac{1}{c_\star}]$, where $c^\star,c_\star>0$ can be computed from the parameters of the model. This means that the opinion spreads at least with speed $c_\star$ and at most with speed $c^\star$.\\

To illustrate this, let us show a simulation of \eqref{syst loc} (our system with local stifling). We take $G$ to be a random graph (we take a Watts-Strogatz random graph, such graphs appear in the modeling of social networks, see \cite{watts1998collective}) with $50$ nodes. At time $t=0$, we have $I_0^0 = 0.1$ and $I_n^0=0$ for all $n\neq 0$ (the opinion is initially present only in the node $n=0$) and $S_n^0 = 1$. We plot the evolution of the system at $6$ different times. The image on the left represents the graph, the color on each node represents the rate of adoption $r_n(t)$. The image on the right shows the number of ignorants and spreaders at each node.

\begin{figure}[H]
\centering

% -------- Row 1 --------
\begin{minipage}{0.48\textwidth}
\centering
\includegraphics[width=\linewidth]{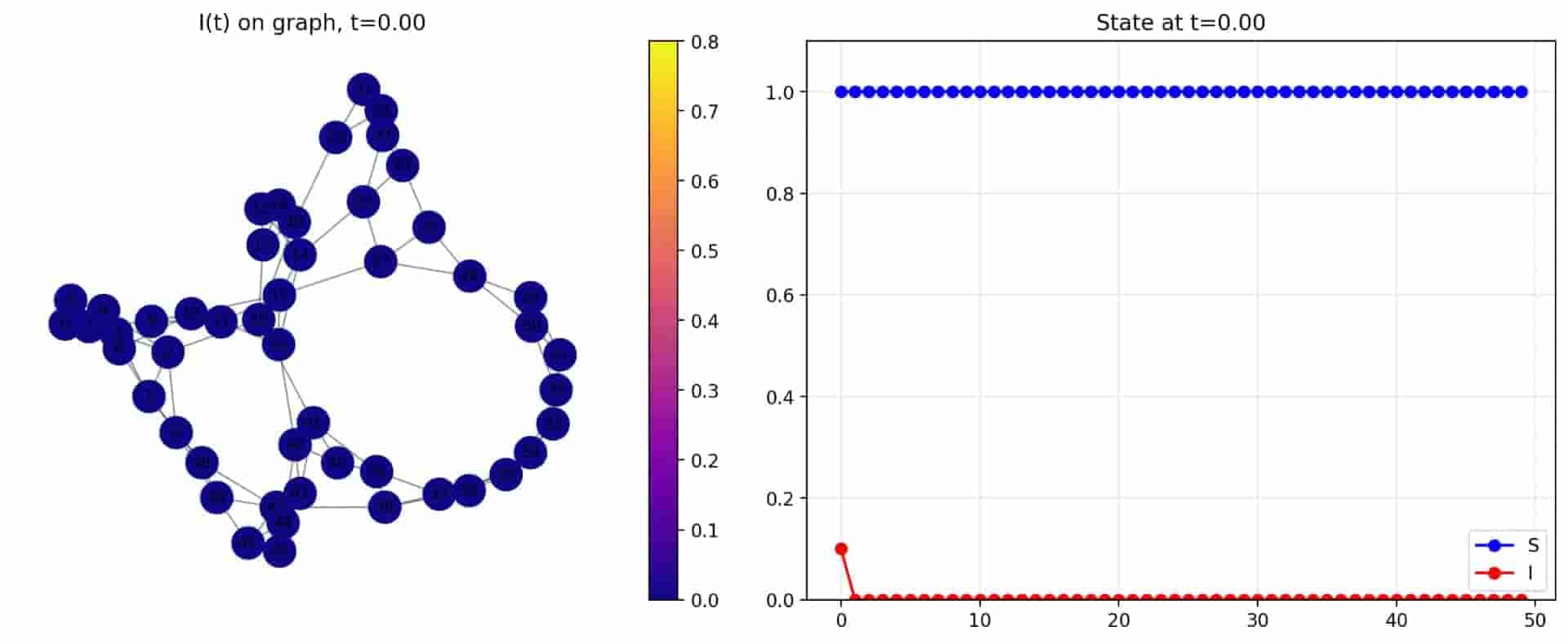}
\end{minipage}
\hfill
\begin{minipage}{0.48\textwidth}
\centering
\includegraphics[width=\linewidth]{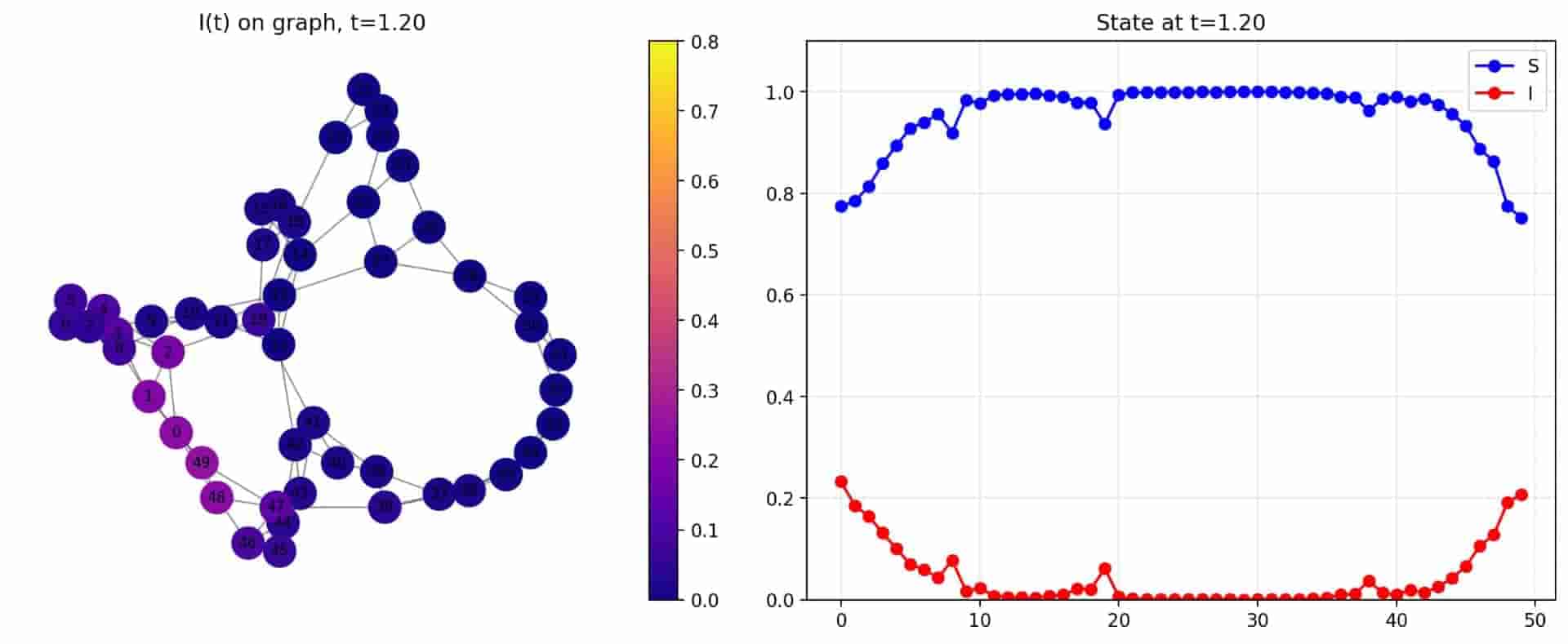}
\end{minipage}

\vspace{0.5cm}

% -------- Row 2 --------
\begin{minipage}{0.48\textwidth}
\centering
\includegraphics[width=\linewidth]{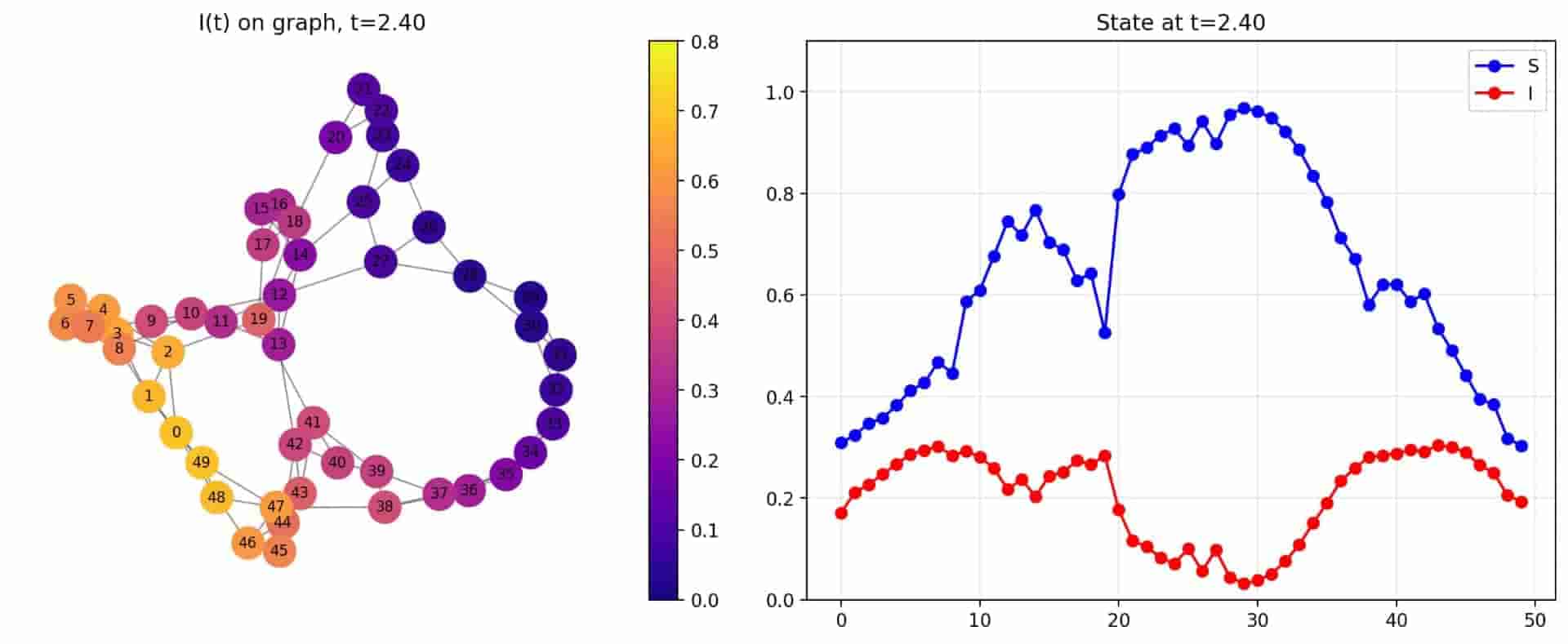}
\end{minipage}
\hfill
\begin{minipage}{0.48\textwidth}
\centering
\includegraphics[width=\linewidth]{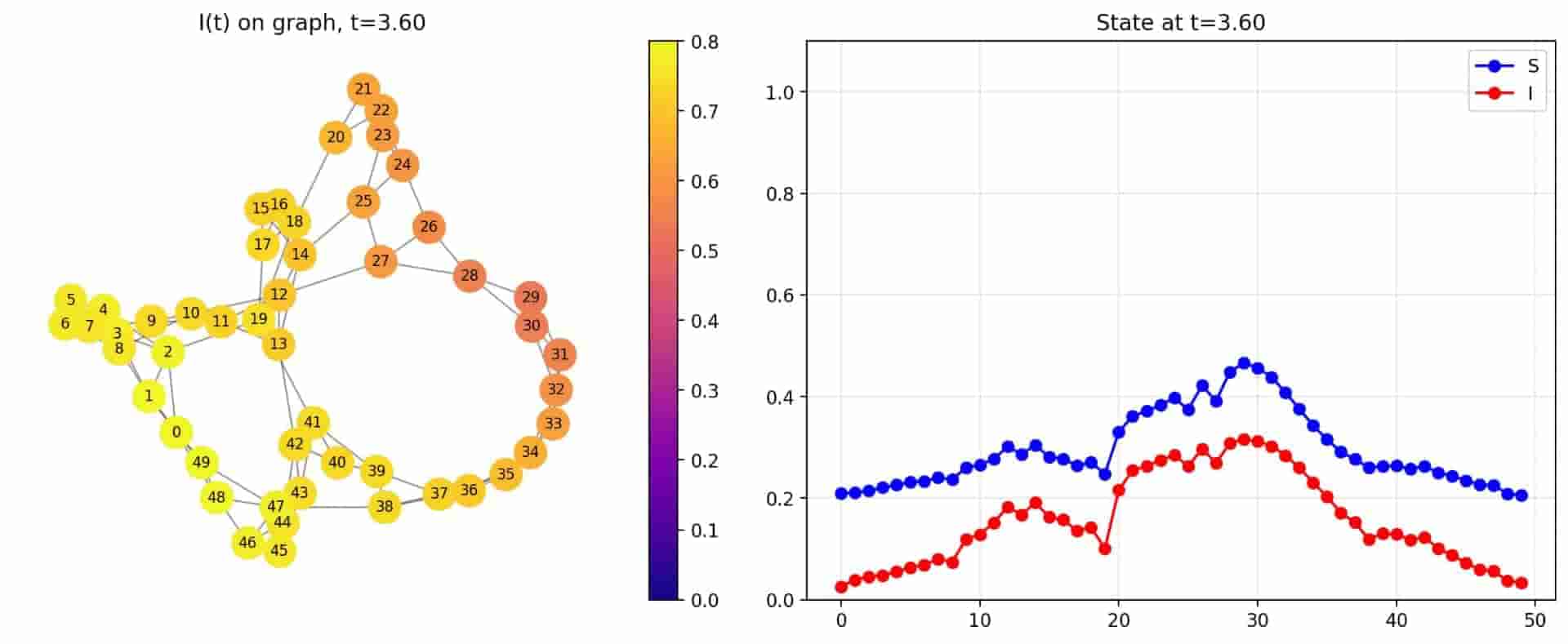}
\end{minipage}

\vspace{0.5cm}

% -------- Row 3 (centered last image) --------
\begin{minipage}{0.48\textwidth}
\centering
\includegraphics[width=\linewidth]{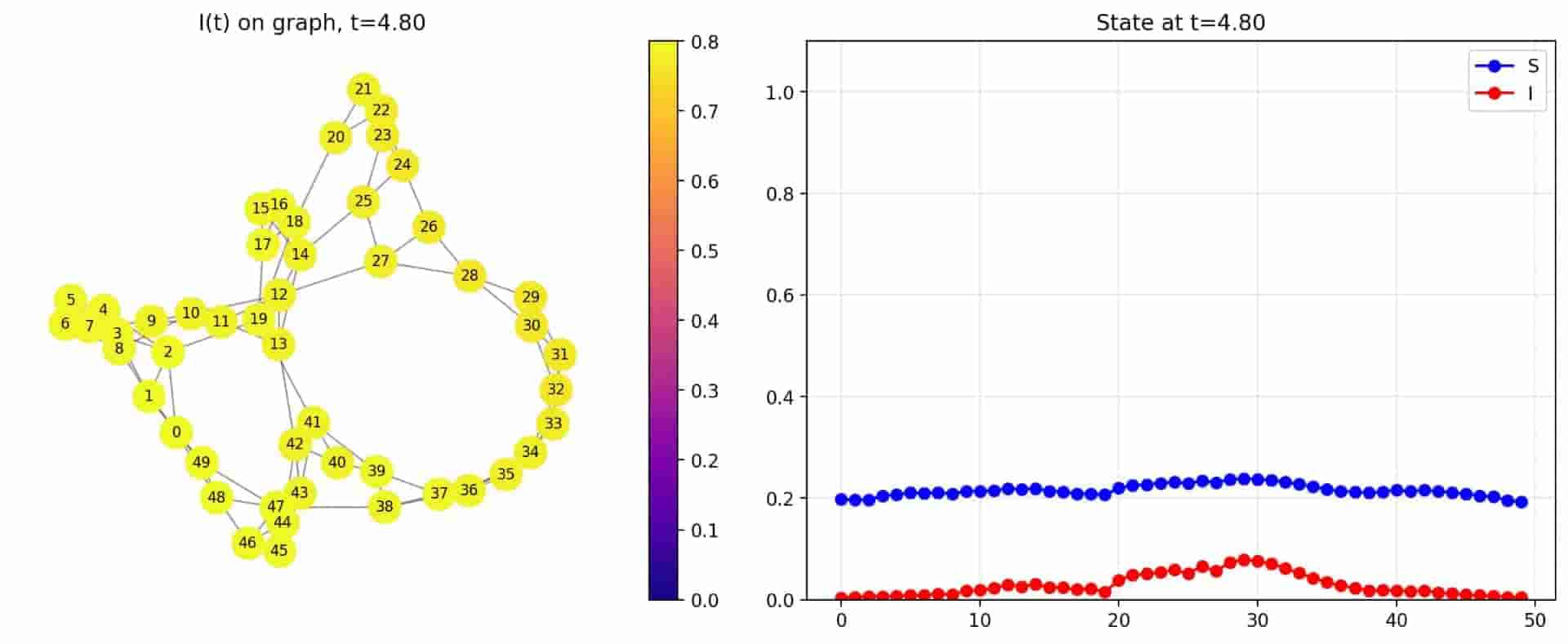}
\end{minipage}
\hfill
\begin{minipage}{0.48\textwidth}
\centering
\includegraphics[width=\linewidth]{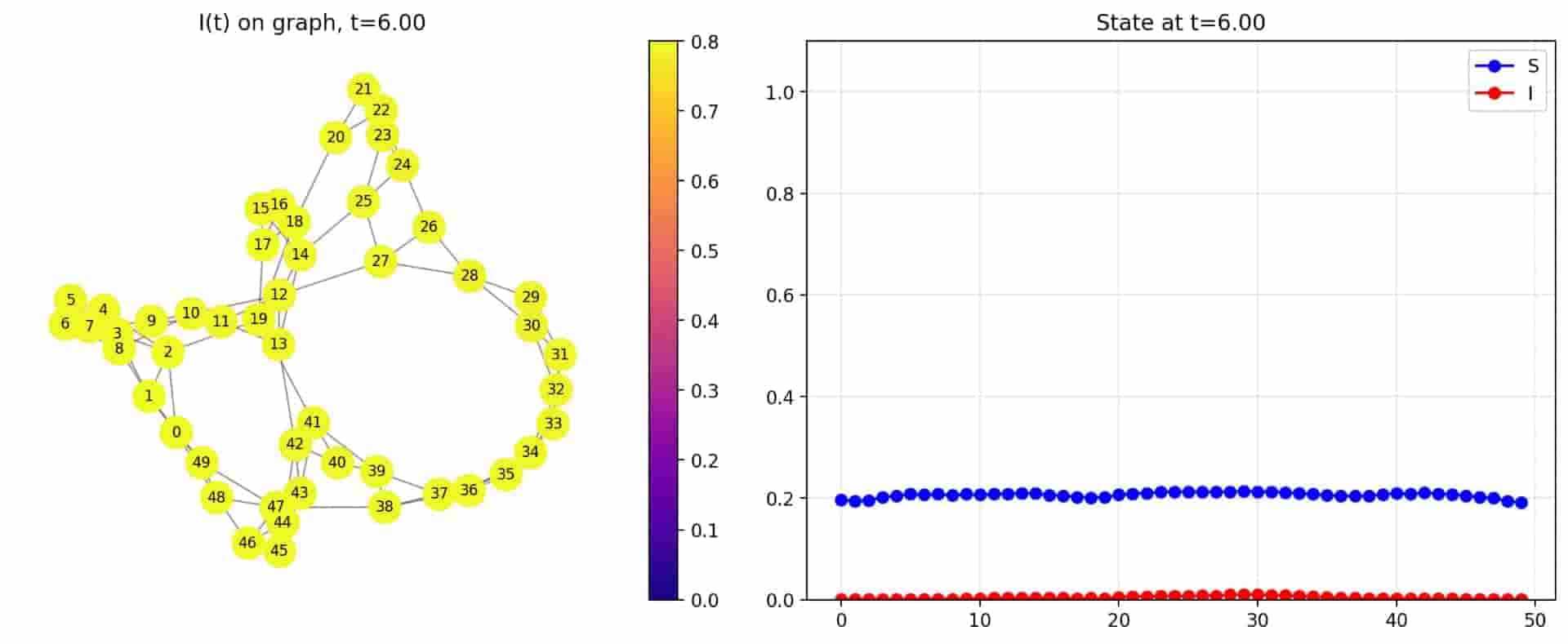}
\end{minipage}

\caption{Propagation of the opinion on a Watts-Strogatz graph. Local stifling.}
\end{figure}

We see that the opinion propagates (starting from the node $n=0$, it eventually reaches all the nodes), and it spreads in a wave-like motion.

Observe also that the final rate of adoption $r_n^\infty$ is independent of $n$ (at least for $n$ far away from $0$), this is in no way generic, see Remark \ref{conj hom} below for a discussion on this topic. 

Here is another simulation, where $G$ is a cyclic graph. The wave-like propagation pattern is clearly visible in this case.

\begin{figure}[H]
\centering

% -------- Row 1 --------
\begin{minipage}{0.48\textwidth}
\centering
\includegraphics[width=\linewidth]{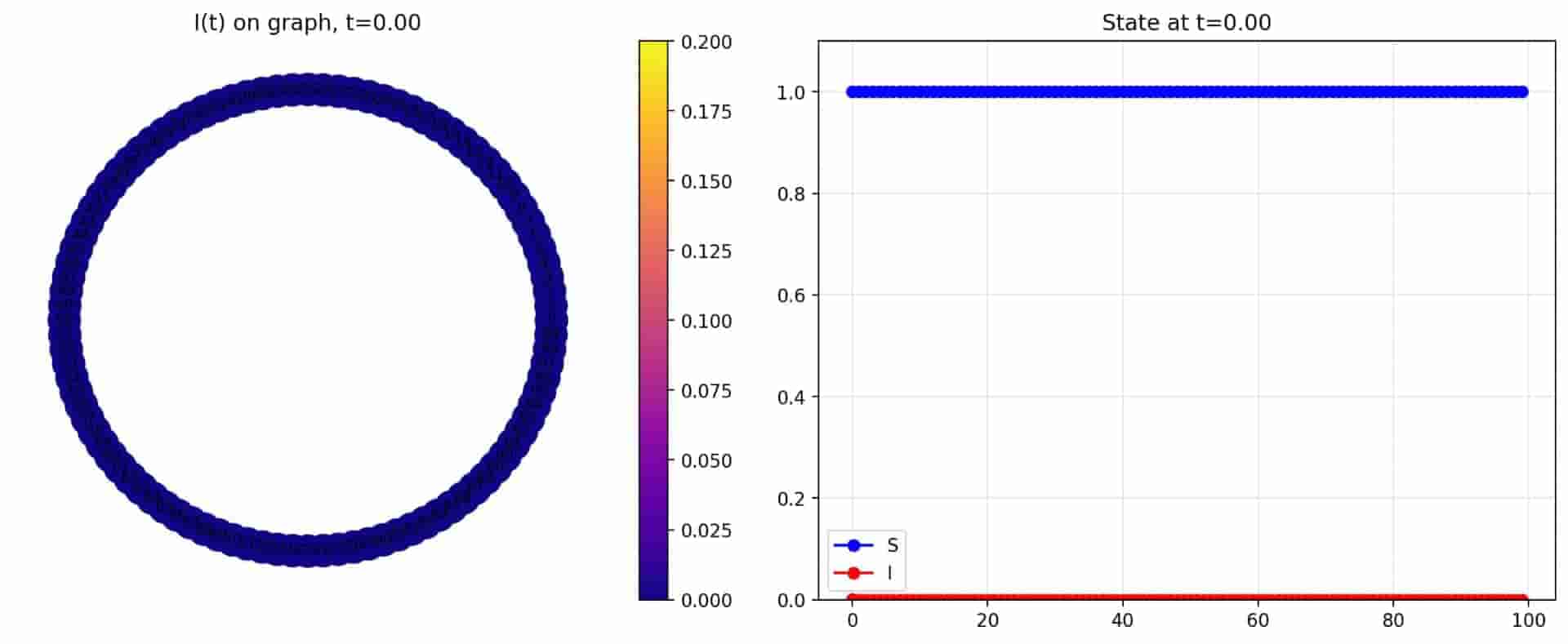}
\end{minipage}
\hfill
\begin{minipage}{0.48\textwidth}
\centering
\includegraphics[width=\linewidth]{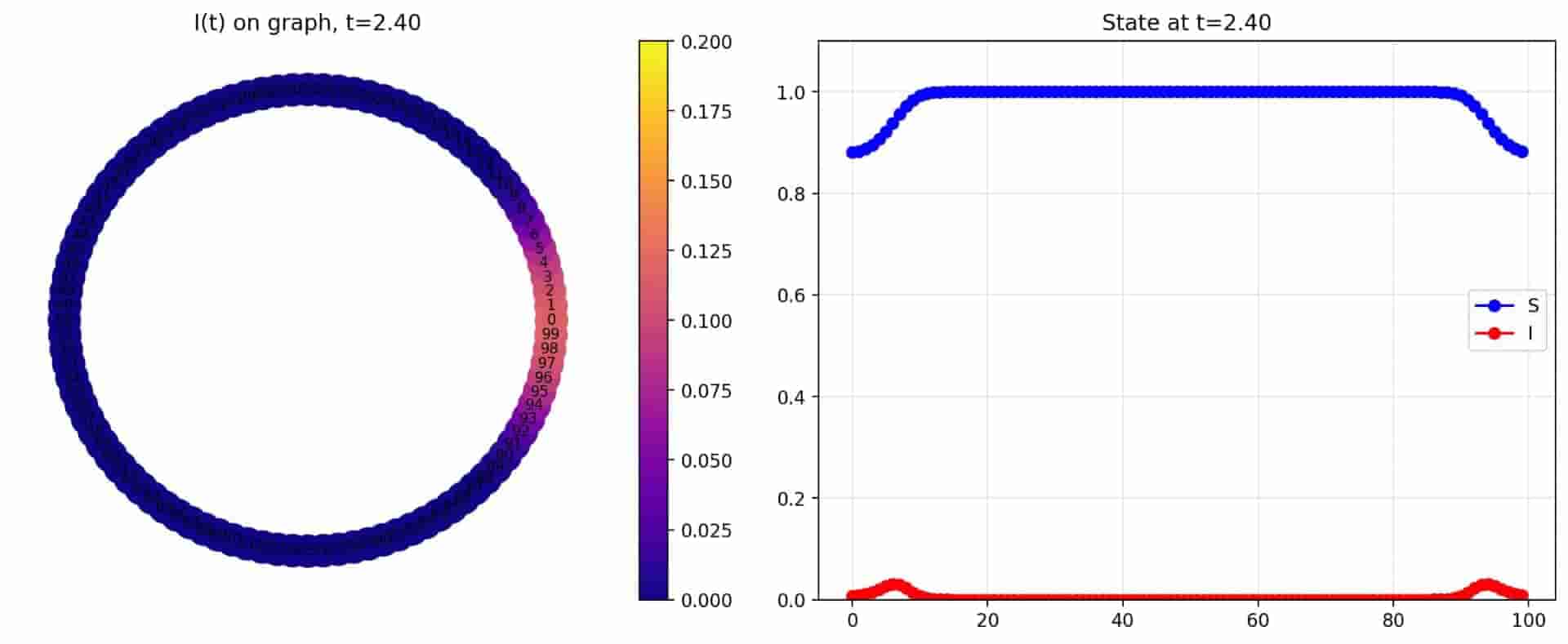}   
\end{minipage}

\vspace{0.5cm}

% -------- Row 2 --------
\begin{minipage}{0.48\textwidth}
\centering
\includegraphics[width=\linewidth]{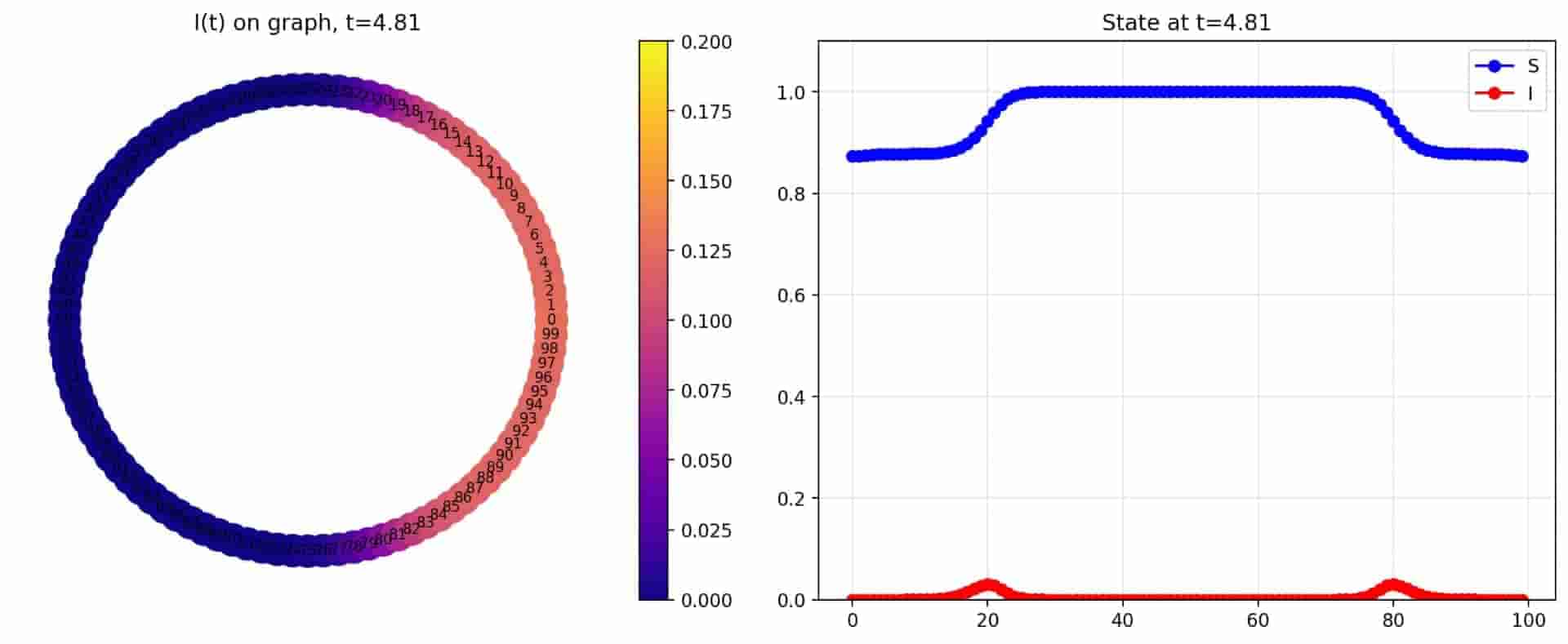}
\end{minipage}
\hfill
\begin{minipage}{0.48\textwidth}
\centering
\includegraphics[width=\linewidth]{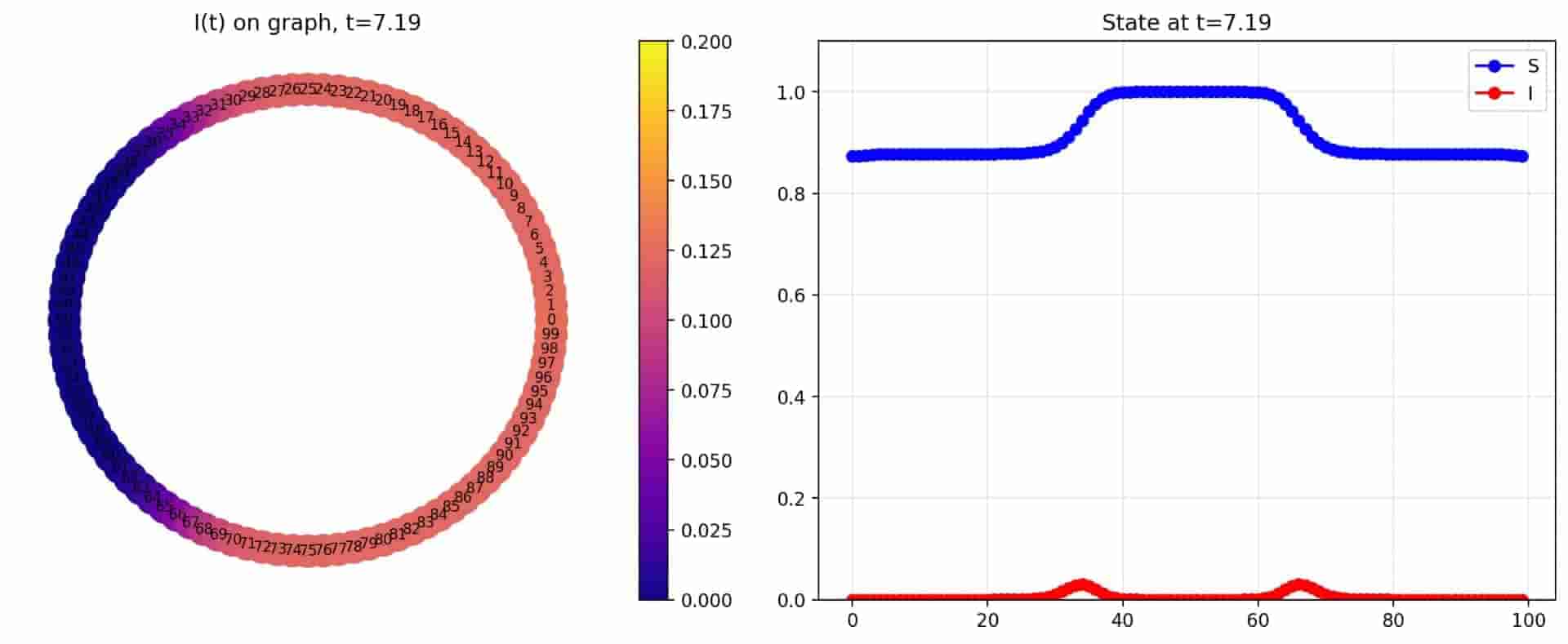}
\end{minipage}

\vspace{0.5cm}

% -------- Row 3 (centered last image) --------
\begin{minipage}{0.48\textwidth}
\centering
\includegraphics[width=\linewidth]{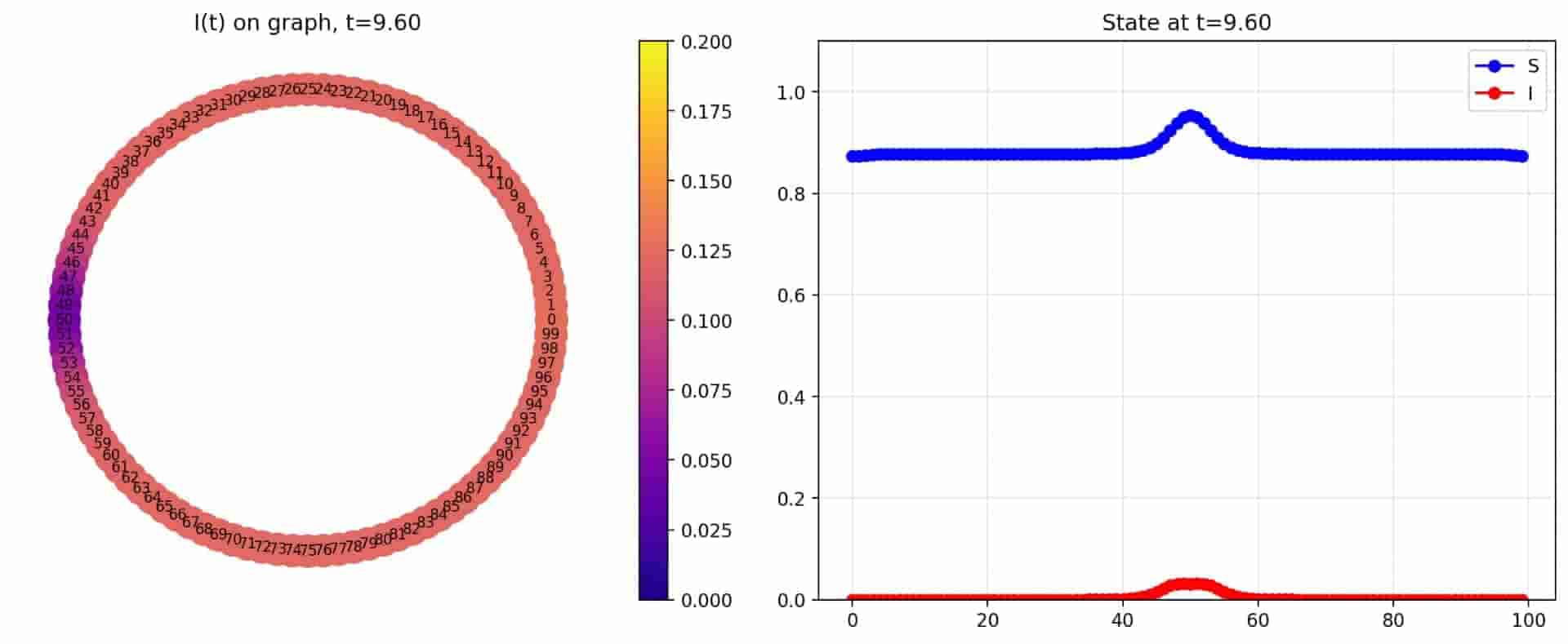}
\end{minipage}
\hfill
\begin{minipage}{0.48\textwidth}
\centering
\includegraphics[width=\linewidth]{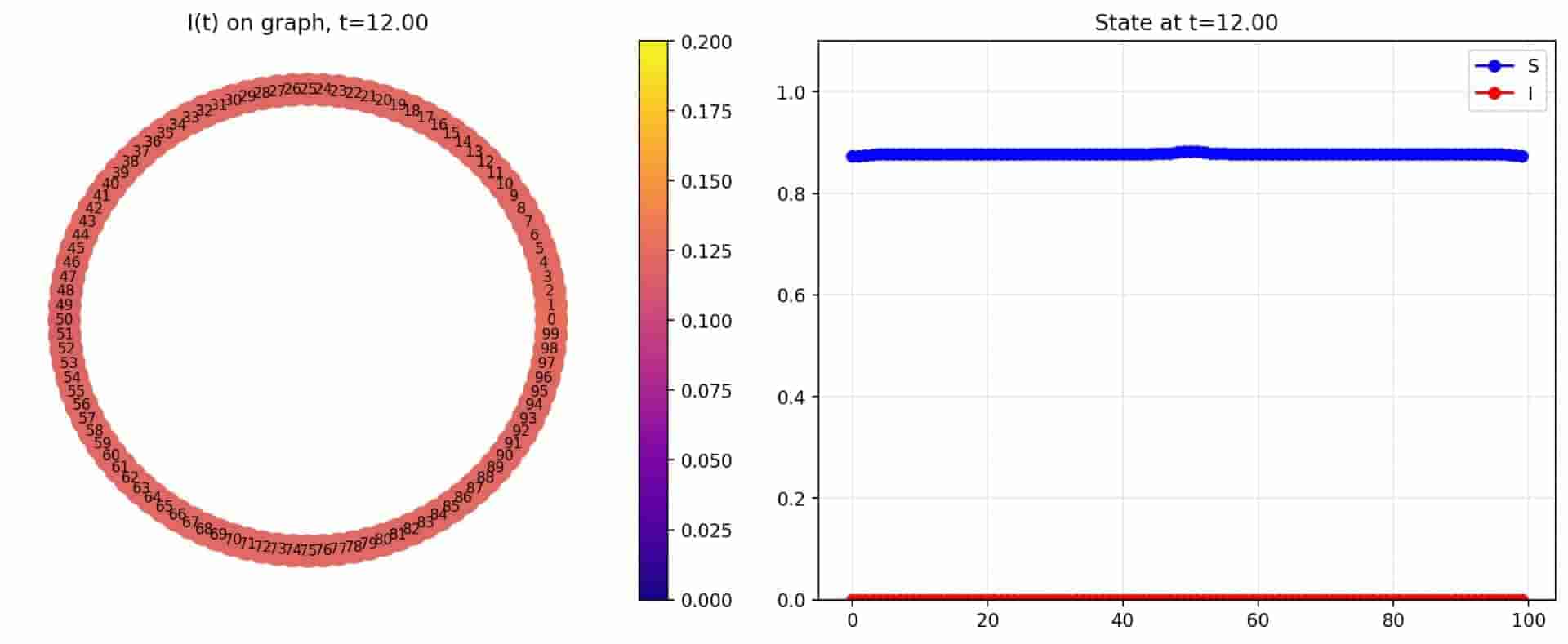}
\end{minipage}

\caption{Propagation of the opinion on a cycle - local stifling}
\end{figure}

Both these simulations are done under the hypothesis of local stifling: we shall primarily study this situation. \\

In the second part of the paper, we consider the case where the stifling can be non-local. We will see that our result concerning the convergence partially persists in this case.

However, we will show some numerical simulations that strongly suggest that the non-local stifling can trigger the emergence of {\em patterns.}

More precisely, we will observe that there are situations where the final distribution of the opinion on the graph is homogeneous when the range of the stifling is ``small'' and becomes heterogeneous when the range of stifling increases. It will appear that long-range stifling may lead to the emergence of segregated ``bubbles'' where the opinion is adopted.

From the sociological point of view, this could be interpreted by saying that strong self-censorship would lead to the generation of isolated ideological ``echo chamber''. 
This can be interesting from the modeling point of view: it suggests a mechanism that could lead to the emergence of ``bubbles'' in the opinion world. This is illustrated in the simulation below, where we use the same parameters as above except with a non-local stifling.

\begin{figure}[H]
\centering

% -------- Row 1 --------
\begin{minipage}{0.48\textwidth}
\centering
\includegraphics[width=\linewidth]{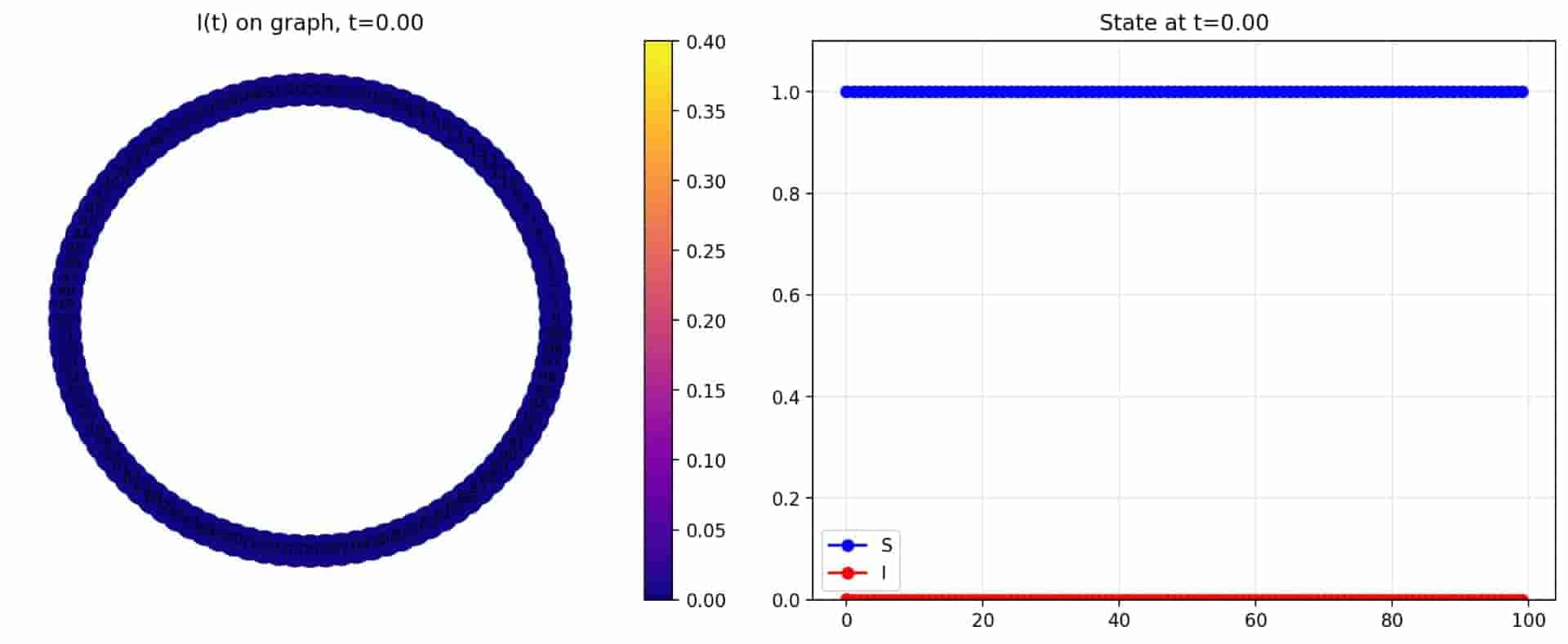}
\end{minipage}
\hfill
\begin{minipage}{0.48\textwidth}
\centering
\includegraphics[width=\linewidth]{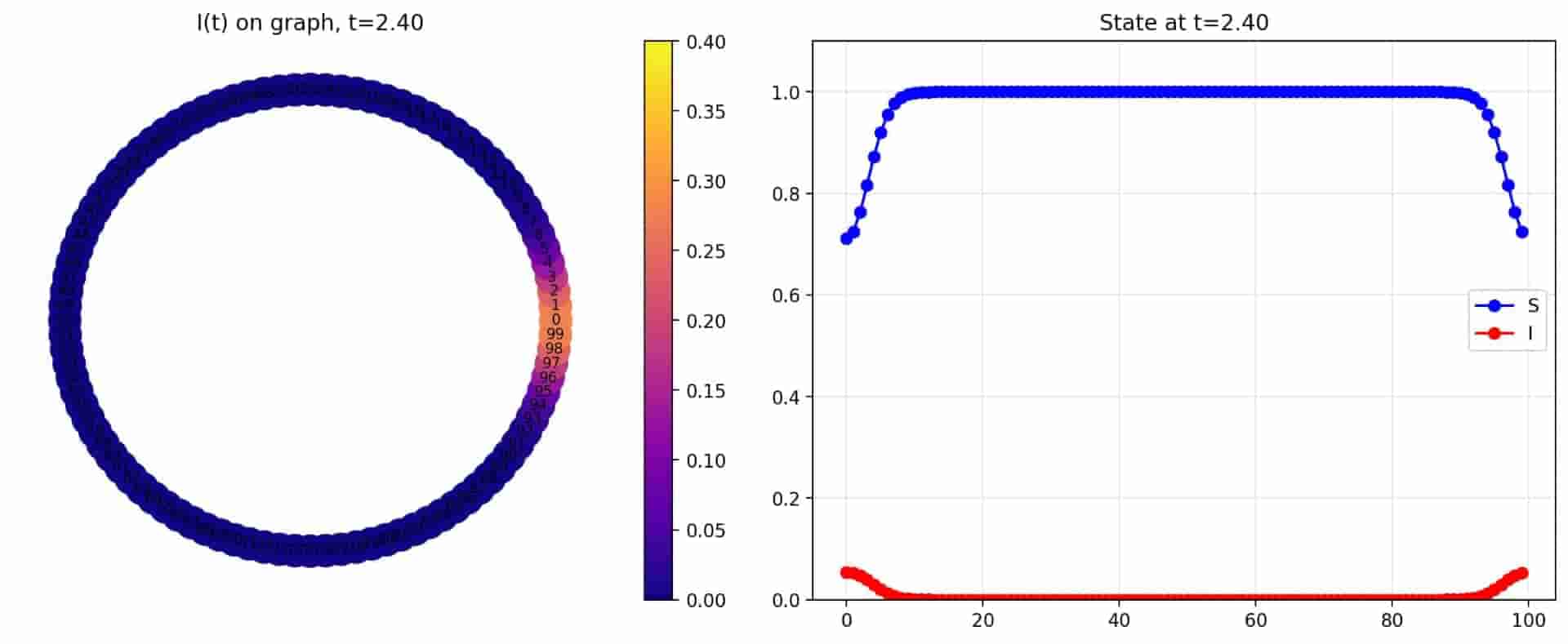}   
\end{minipage}

\vspace{0.5cm}

% -------- Row 2 --------
\begin{minipage}{0.48\textwidth}
\centering
\includegraphics[width=\linewidth]{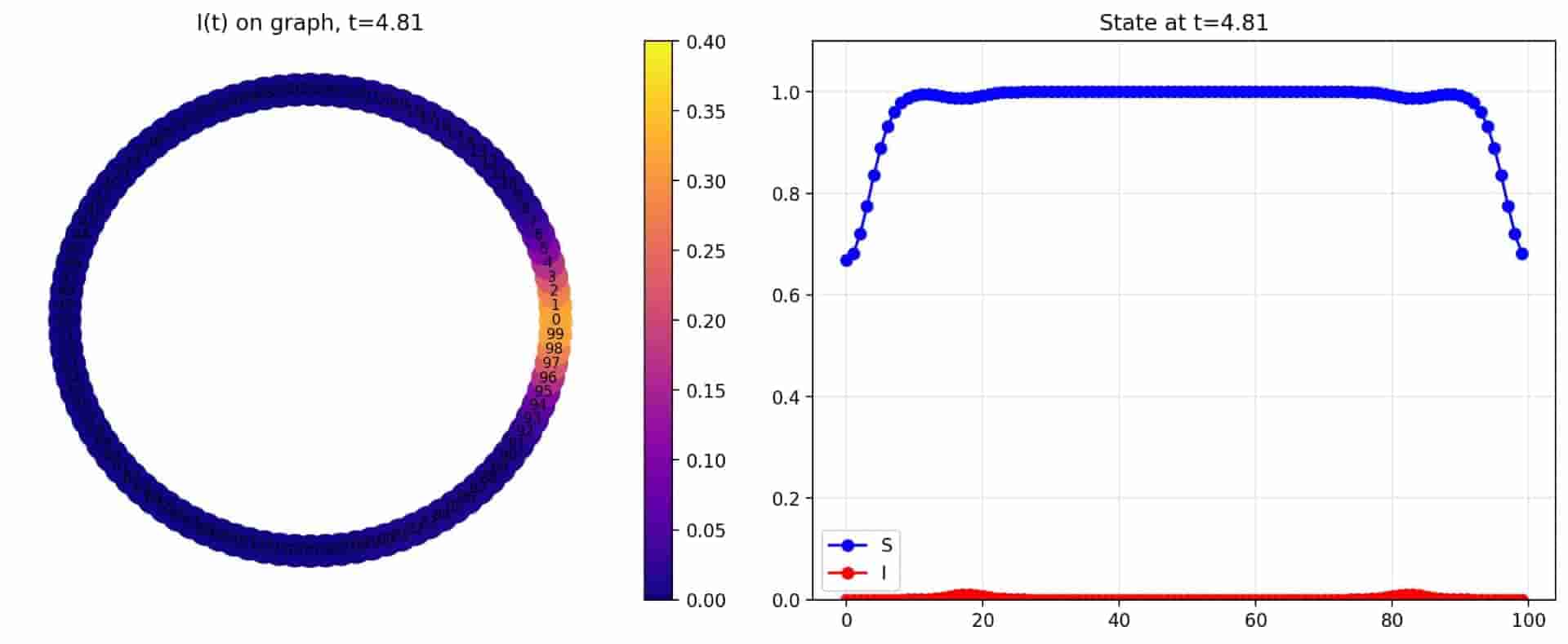}
\end{minipage}
\hfill
\begin{minipage}{0.48\textwidth}
\centering
\includegraphics[width=\linewidth]{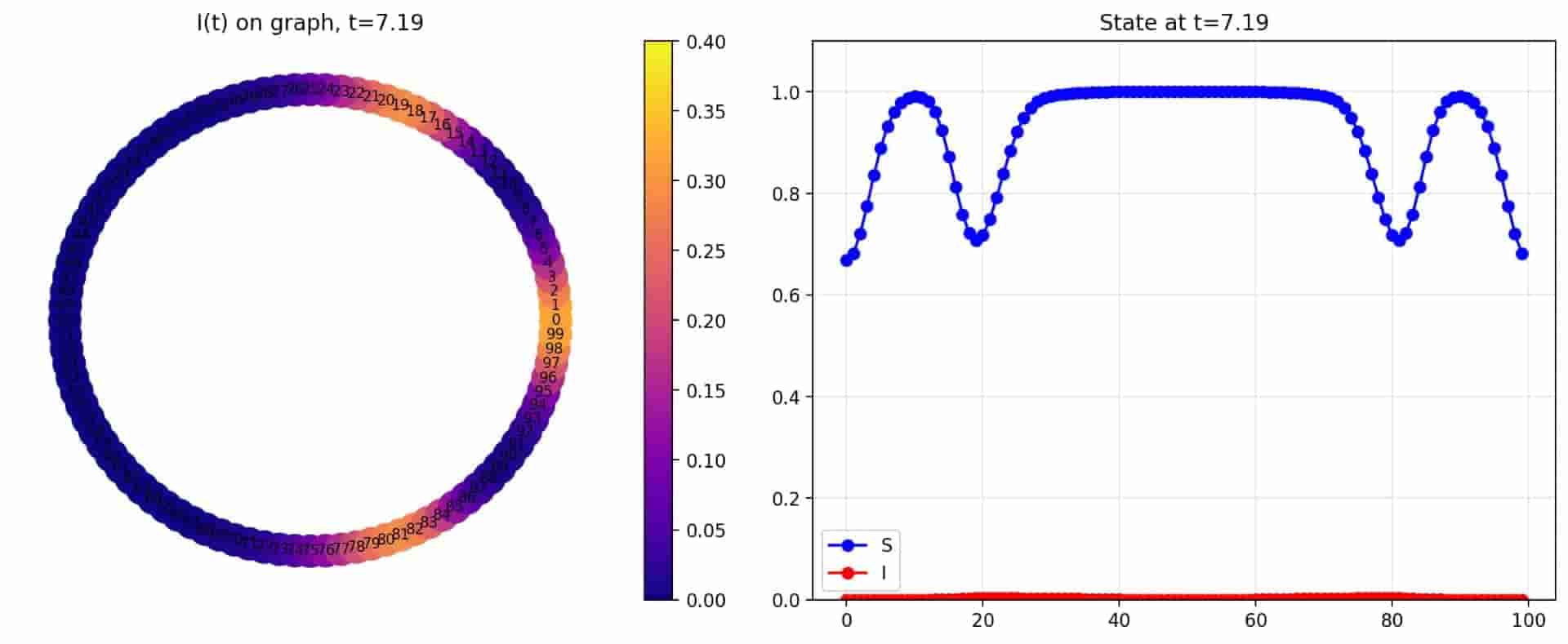}
\end{minipage}

\vspace{0.5cm}

% -------- Row 3 (centered last image) --------
\begin{minipage}{0.48\textwidth}
\centering
\includegraphics[width=\linewidth]{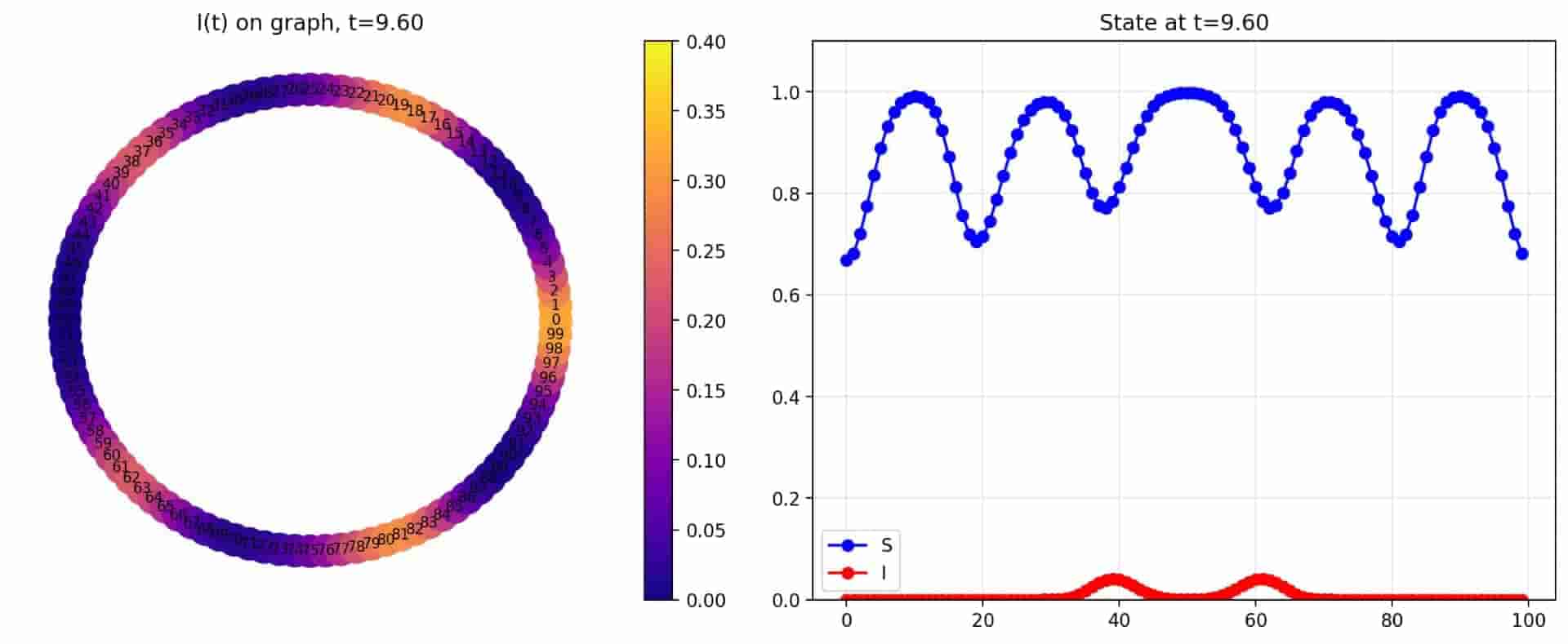}
\end{minipage}
\hfill
\begin{minipage}{0.48\textwidth}
\centering
\includegraphics[width=\linewidth]{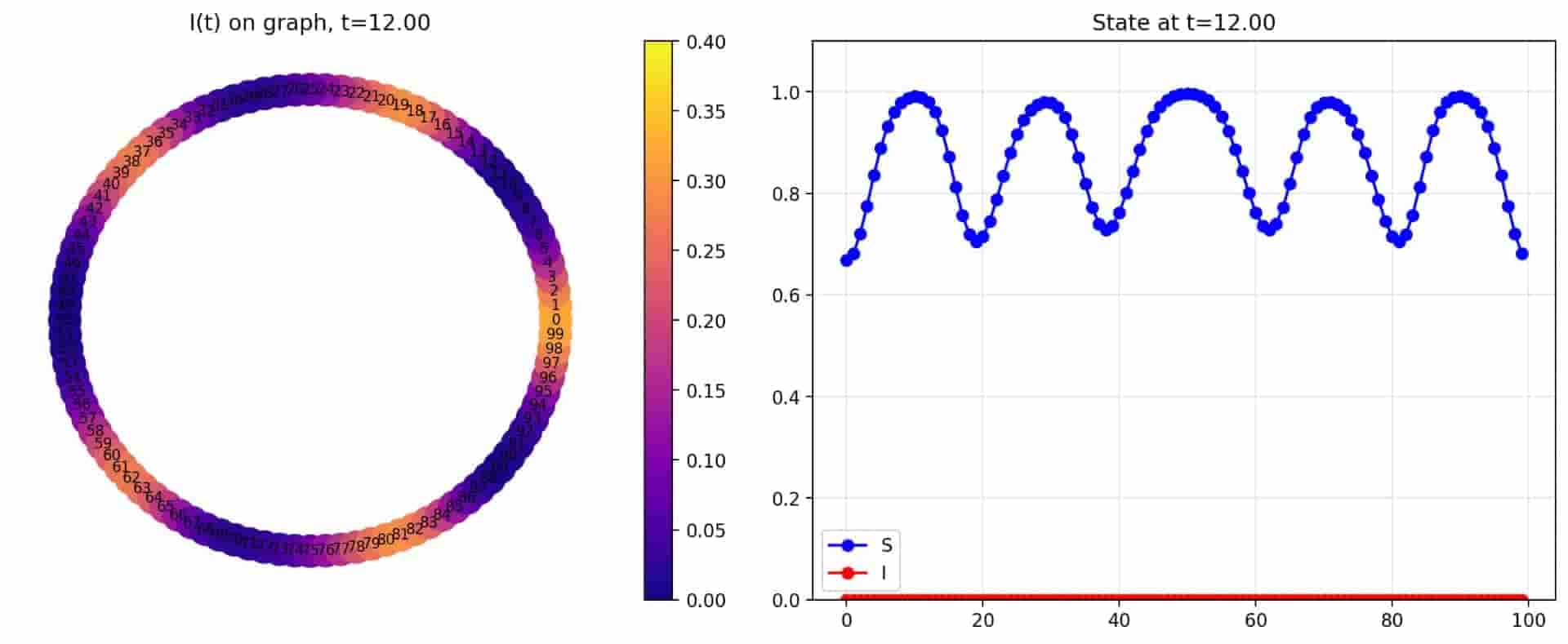}
\end{minipage}

\caption{Propagation of the opinion on a cycle. Non-local stifling.}
\end{figure}

\subsection{Related works}

Let us briefly gather in this section some remarks concerning related works.

First, we want to mention that the system with local stifling \eqref{syst loc} was considered in the paper \cite{NEKOVEE2007457}, where the authors obtain the system \eqref{syst loc} from an individual-based model using the framework of Interactive Markov Chain, and in the paper \cite{moreno2004dynamics}, where the system is studied numerically.

The Daley-Kendall model was also considered in \cite{rumorpei2026numerical} but in a different setting: the authors consider there a continuous version, where the ignorants, spreaders and stiflers move (diffuse) and where the transmission of the opinion is local. The authors conduct numerical experiments. 

    As far as we are aware, the system with non-local stifling \eqref{syst} was not considered.\\

The analysis conducted in the present paper is inspired by the theory of reaction-diffusion equations (in particular applied to epidemiological systems). We refer to \cite{berestycki2020modeling, berestyckiCriminal, ducasse2025emergence} for other examples of reaction-diffusion theory applied to sociological models, but in a continuous setting. We also mention the work \cite{besse2021spreading} where an epidemiological model on a graph is considered.\\

    Finally, we mention that there exist many other deterministic models that describe the spread of opinions, such as the {\em bounded confidence models}, see \cite{ben2015pattern, hegselmann2005opinion} and the references therein for instance.

\subsection{Hypotheses}\label{sec hyp}

We gather in this section the hypotheses that shall be assumed throughout the whole paper.\\

{\em The graph $G$}.
\begin{itemize}
    \item In the whole paper, $G$ is a countably infinite graph, whose nodes are indexed by $n\in\N$.

    \item We assume that the graph $G$ is connected.

    \item The number of neighbors that one node can have is bounded, that is, there is $V \in \N$ such that, for all $i\in \N$, we have
    $$
    \#\{j \ : \ j\sim i\}\leq V.
    $$
    \item In the whole paper, we denote $d(i,n)$ the graph distance between nodes $i$ and $n$. For notational convenience, we also denote $d(n) := d(0,n)$ the distance between the node $0$ and the node $n$. When considering the spreading of the solutions it is convenient to fix a reference point ($0$ here).
\end{itemize}

{\em Hypotheses on the transmission coefficients $(\alpha_{k,n})$.}

\begin{itemize}

\item We have $\alpha_{k,n} \geq 0$ for all $k,n\in\N$.

\item We have $\alpha_{k,n}\neq 0$ if and only if $k\sim n$.

    \item There are $\ul \alpha,\ol \alpha >0$ such that, for all $(k,n)\in \N^2$ such that $k\sim n$, we have $\ul \alpha \leq \alpha_{k,n} \leq \ol  \alpha$.

    \item For all $n\in \N$, we have $\alpha_{n,n} \geq \ul \alpha$ (this is actually a consequence of the previous hypotheses and our convention that $n\sim n$ for all $n\in \N$).
\end{itemize}

{\em Hypotheses on the stifling parameters.} 
\begin{itemize}
    \item For all $(k,n)\in \N^2$, we have $\beta_{k,n}\geq 0$.
    \item There are $\ul \beta,\ol\beta >0$ such that, for all $(k,n)\in\N^2$ such that $\beta_{k,n}\neq 0$, we have $\ul \beta \leq \beta_{k,n}\leq \ol \beta$.
    \item For all $n\in \N$, we have $\beta_{n,n} >0$.

    \item There is $R_\beta\geq 0$ such that $\beta_{k,n}\neq 0$ only if $d(k,n)\leq R_\beta$. 
\end{itemize}

The parameter $R_\beta$ is the range of the stifling process. When $R_\beta =0$, the stifling is local.\\

{\em Hypotheses on the initial data.}

The evolution system \eqref{syst} (or \eqref{syst loc}) will be completed with initial data $(S_n^0,I_n^0,R_n^0)$, for all $n\in\N$. As already mentioned, we always take $R_n^0 = 0$, for all $n\in \N$.  We also assume that
\begin{itemize}
    \item There are $\ul \sigma, \ol \sigma, \ol \iota >0$ such that 
    $$
\ul \sigma \leq S_n^0 \leq \ol \sigma,\quad \forall n\in \N,
$$
and
$$
0\leq I_n^0 \leq \ol \iota,\quad \forall n\in \N.
$$
\item We shall also assume that the initial population of spreaders is compactly supported, that is, there are only a finite number of communities where the opinion is present at initial time:
$$
\#\{n \ : \ I_n^0 >0\}<+\infty.
$$
\end{itemize}

Under all these hypotheses, it is standard to show that there exist classical solutions to the system \eqref{syst}, and that these solutions are unique and non-negative, i.e., $S_n,I_n,R_n\geq 0$ for all $n\in\N$.

\subsection{Results}

We state in this section our main results concerning the model \eqref{syst}. Our results concern the long-time behavior of the solutions $(S_n,I_n)$ of \eqref{syst}, they are stated in the theorems below. After each theorem, we give some interpretation using the rate of adoption \eqref{rate}.\\  

We first consider the model \eqref{syst loc} with local stifling. In this case, we generalize the result of Daley and Kendall \cite{DaleyKendall} that says that the opinion always spreads (there is no threshold effect such as the one observed in epidemiology), but only a portion of the population eventually adopts the opinion. We give upper and lower estimates on this proportion.

\begin{theorem}\label{th cv}
    Consider the case with purely local stifling, system \eqref{syst loc}. Let $(S_n,I_n)$ be the solution of \eqref{syst loc} arising from an initial datum $(S_n^0,I_n^0)$ satisfying the hypotheses of Section \ref{sec hyp}.\\

    Then, for all $n\in \N$,
    $$
    I_n(t)\underset{t\to+\infty}{\longrightarrow}0, \quad S_n(t)\underset{t\to+\infty}{\longrightarrow}S_n^\infty,
    $$
    where we have, for all $n\in \N$,
   $$
   S_n^0 e^{-\sum_i \frac{\alpha_{i,n}}{\beta_i}}e^{-\sum_i \frac{\alpha_{i,n}}{\alpha_{i,i}}}\leq S_n^\infty \leq S_n^0 e^{-\sum_i \frac{\alpha_{i,n}}{\beta_i}}.
    $$
\end{theorem}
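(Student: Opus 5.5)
The whole statement reduces to estimating the total spreading time $\theta_i:=\int_0^{+\infty} I_i(t)\,dt$ at each node. From the first equation of \eqref{syst loc}, $(\ln S_n)'=-\sum_i\alpha_{i,n}I_i$, so
$$
S_n(t)=S_n^0\exp\Big(-\sum_i\alpha_{i,n}\int_0^t I_i\Big).
$$
Hence $S_n^\infty=S_n^0e^{-\sum_i\alpha_{i,n}\theta_i}$, with the limit existing by monotonicity. The claimed two-sided bound on $S_n^\infty$ is then equivalent to the node-wise estimate
$$
\frac{1}{\beta_i}\;\le\;\theta_i\;\le\;\frac1{\beta_i}+\frac1{\alpha_{i,i}}\qquad\text{for every } i\in\N .
$$
The important point is that this estimate involves only node $i$, because the stifling is local.

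\emph{Step 1: finiteness of $\theta_i$ and $I_i\to0$.} Write $J_i:=P_i-S_i=I_i+R_i$, the number of adopters at node $i$. Since $I_i'=J_i'-\beta_iI_iJ_i$, integration gives $\beta_i\int_0^t I_iJ_i\le P_i$. By connectedness and the positivity of the $\alpha$'s, every node is eventually reached, so $J_i(t_0)>0$ for some $t_0$. As $J_i$ is nondecreasing, this yields $\theta_i<\infty$. Moreover $I_i'$ is bounded, because $S_i\le P_i$, $\sum_k\alpha_{k,i}I_k\le V\ol\alpha\sup_kP_k$ and $I_iJ_i\le P_i^2$. An integrable function with bounded derivative tends to $0$, so $I_i\to0$, while $J_i\to J_i^\infty>0$.

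\emph{Step 2: the lower bound $\beta_i\theta_i\ge1$.} I will introduce the fraction of active spreaders among the adopters, $u_i:=I_i/J_i\in[0,1]$. Just after the opinion first reaches node $i$, we have $R_i\ll I_i$, so $u_i=1$ at that instant. A direct computation, using $I_i'=J_i'-\beta_iI_iJ_i$, gives
$$
u_i'=\frac{J_i'}{J_i}(1-u_i)-\beta_iI_i ,
$$
that is, $\beta_iI_i=(1-u_i)(\ln J_i)'-u_i'$. Integrating from the arrival time to $+\infty$, and using $u_i(\infty)=0$ from Step 1, gives
$$
\beta_i\theta_i=1+\int (1-u_i)\,(\ln J_i)'\,dt\;\ge\;1 .
$$
This is exactly the upper bound $S_n^\infty\le S_n^0e^{-\sum_i\alpha_{i,n}/\beta_i}$.

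\emph{Step 3: the upper bound, which I expect to be the main obstacle.} We need
$$
\int(1-u_i)(\ln J_i)'\,dt=\int \frac{R_i\,S_i\sum_k\alpha_{k,i}I_k}{J_i^2}\,dt\le\frac{\beta_i}{\alpha_{i,i}} .
$$
The difficulty is that $J_i'$ contains contributions from the neighbors, so it is not controlled by node $i$ alone.

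My first attempt is to use $1-u_i=R_i/J_i$ together with $R_i(t)=\beta_i\int_0^tI_iJ_i\le\beta_iJ_i(t)\int_0^tI_i$. This gives $1-u_i\le\min\big(1,\beta_i\int_0^tI_i\big)$, and I would then try to trade the factor $\beta_i$ against a factor $\alpha_{i,i}$ coming from the self-contagion term $J_i'\ge\alpha_{i,i}S_iI_i$.

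If this fails, I would instead work directly on the $I_i$-equation. After the arrival time, $I_i'\le S_iA_i-\beta_iI_iJ_i$ with $A_i:=\sum_k\alpha_{k,i}I_k$. I would compare $\int I_i$ with $\int(-S_i'/S_i)/\alpha_{i,i}$: the inequality $\alpha_{i,i}I_i\le A_i=-(\ln S_i)'$ bounds the extra time, beyond $1/\beta_i$, spent while node $i$ is still absorbing new adopters. As a sanity check, the single-node Daley--Kendall formula gives $\theta\le1/\beta+1/\alpha$, which is exactly the expected constant.
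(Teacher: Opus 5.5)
Your Steps 1 and 2 are correct, and Step 2 takes a different route from the paper. The exact identity $\beta_i\theta_i=1+\int(1-I_i/J_i)(\ln J_i)'\,dt$ replaces the paper's use of the supersolution inequality \eqref{sursol}, passed to the limit and combined with connectedness. Two points need tightening: phrase ``$I_i/J_i=1$ at arrival'' as the limit $R_i/J_i\le\beta_i\int_{t_a}^tI_i\to0$ as $t\downarrow t_a$, and justify the improper integral at $t_a$ by monotone convergence.

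The genuine gap is Step 3. The upper bound $\theta_i\le\frac1{\beta_i}+\frac1{\alpha_{i,i}}$, which is what gives the \emph{lower} bound on $S_n^\infty$, is never proved; you only list two strategies, and neither works as stated.

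The first strategy is too lossy to reach the stated constant. Take a single node with $I^0\to0$. The pointwise bound $1-I/J\le\min(1,\beta\int_0^tI)$ turns the right-hand side of your identity into $\int_0^{\alpha\theta}\min(1,\tfrac{\beta}{\alpha}x)\frac{dx}{e^x-1}$. As $\beta/\alpha\to0$ this behaves like $\frac{\pi^2}{6}\frac{\beta}{\alpha}$. The target is $\frac{\beta}{\alpha}$, and the true value of $\beta\theta-1$ is asymptotically $\frac{\beta}{\alpha}$, so no further estimate can close the chain.

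The second strategy, as sketched, only gives $\alpha_{i,i}\theta_i\le\ln(S_i^0/S_i^\infty)$. That requires a lower bound on $S_i^\infty$, which is exactly what Step 3 is supposed to provide.

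The missing idea is already in your Step 1: do not divide by $J_i$. Keep $\beta_i\int_0^\infty I_iJ_i\,dt=R_i^\infty\le P_i$ and bound $J_i$ from below using self-contagion only:
$J_i(t)=P_i-S_i^0e^{-\sum_k\alpha_{k,i}w_k(t)}\ge P_i-S_i^0e^{-\alpha_{i,i}w_i(t)}$, where $w_i(t):=\int_0^tI_i$ (the paper's $u_i$, not yours). With the substitution $dw_i=I_i\,dt$ this gives
\[
\beta_i\Big(P_i\theta_i-\frac{S_i^0}{\alpha_{i,i}}\big(1-e^{-\alpha_{i,i}\theta_i}\big)\Big)\le P_i,
\qquad\text{hence}\qquad
\beta_i\theta_i\le 1+\frac{\beta_iS_i^0}{\alpha_{i,i}P_i}\le 1+\frac{\beta_i}{\alpha_{i,i}}.
\]
This is exactly the paper's argument. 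There, \eqref{subsol} is obtained by using $f(\sum_k\alpha_{k,n}u_k)\ge f(\alpha_{n,n}u_n)$ inside the memory term of \eqref{eq u loc}, integrating $u_n'f(\alpha_{n,n}u_n)$ exactly, and then letting $t\to+\infty$.

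In this form the neighbour contributions you were worried about only increase $J_i$, so they can simply be discarded. In your reformulation $\int R_iS_iA_i/J_i^2\,dt$, by contrast, they enter both the numerator and the denominator with competing effects.
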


\noindent {\bf Interpretation.} Theorem \ref{th cv} tells us that the opinion always spreads through the whole network. Indeed, defining $\ul r = 1- \exp(-V\frac{\ul  \alpha}{\overline\beta})$ and $\ol r = 1- \exp(-V\frac{\overline \alpha}{ \underline \beta + \ul \alpha})$ (where $\ol \alpha,\ul  \alpha, V$ are from the hypotheses in Section \ref{sec hyp}), the theorem tells us that, for all $n\in \N$ such that $I_n^0=0$ (that is, all nodes except a finite number), the rate of adoption $r_n(t)$ converges when $t$ goes to $+\infty$ to some value $r_n^\infty$ such that
$$
0<\ul r \leq r_n^\infty\leq \ol r <1.
$$
This means that, even though the nodes where the opinion is present at initial time (which are the nodes $n$ such that $I_n^0>0$) are in finite number, all the nodes will eventually adopt the opinion, but only partially (the rate of adoption is uniformly strictly larger than $0$ and smaller than $1$). \\

Without supplementary hypotheses on the coefficients or on the topology of the graph, we can not expect to get much more information on $r_n^\infty$. However, we believe that the situation can be made explicit when the model is {\em homogeneous}, see Remark \ref{conj hom} below.\\

Now that we know that the opinion always propagates, we study its {\em speed of propagation}. Indeed, we have shown that even the nodes which are very far away will eventually adopt (in some proportion) the opinion. However, it is reasonable to think that, the further the nodes, the longer it will take for the opinion to reach it. This can be quantified.

We still consider the case with local stifling, \eqref{syst loc} and we give some estimates on the time at which each node will adopt the opinion.

\begin{theorem}\label{th speed}
Let $(S_n,I_n)$ be the solution of \eqref{syst loc} with initial datum $(S_n^0,I_n^0)$ that satisfies the hypotheses of Section \ref{sec hyp}.
Let 
$$
c^\star = \inf_{\lambda>0}\sup_n \frac{S_n^0\sum_i \alpha_{i,n}e^{\lambda (d(n)-d(i))}}{\lambda}
$$
and
$$
c_\star = \inf_{\substack{ n,m\in\N^2 \\ d(n,m)\leq1}} \frac{\beta_m}{\beta_n}\alpha_{n,m}S_m^0.
$$
Owing to the hypotheses in Section \ref{sec hyp}, these quantities are finite and strictly positive.\\

    Then, the opinion spreads at most with speed $c^\star$ and at least with speed $c_\star$ in the following sense:
$$
\sup_{d(n)\geq c t}\vert S_n(t) - S_n^0\vert \underset{t\to +\infty}{\longrightarrow}0,\quad \forall c>c^\star,
$$
and
$$
\limsup_{t\to+\infty}\sup_{d(n)\leq ct} (S_n(t) - S_n^0 e^{-\sum_i \frac{\alpha_{i,n}}{\beta_i}}) \leq 0,\quad \forall c \in (0,c_\star).
$$
\end{theorem}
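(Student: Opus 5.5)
The proof splits into two independent parts: an upper bound on the speed, obtained by linearization and an exponential supersolution, and a lower bound, obtained by a quantitative version of the convergence argument behind Theorem \ref{th cv} propagated node by node.

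\textbf{Upper bound on the speed.} Since $S_n(t)\leq S_n^0$ and the stifling term is nonnegative, the spreaders satisfy the linear cooperative differential inequality
$$
I_n'(t)\leq S_n^0\sum_i \alpha_{i,n}I_i(t).
$$
Fix $c>c^\star$, pick $c'\in(c^\star,c)$, and choose $\lambda>0$ such that
$$
\sup_n \frac{S_n^0\sum_i\alpha_{i,n}e^{\lambda(d(n)-d(i))}}{\lambda}<c'.
$$
Then $J_n(t):=Ae^{\lambda(c't-d(n))}$ satisfies $J_n'\geq S_n^0\sum_i\alpha_{i,n}J_i$. Because $I^0$ has finite support, $A$ can be taken large enough that $J_n(0)\geq I_n^0$ for all $n$. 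A comparison principle for this linear cooperative system then gives $I_n\leq J_n$ for all $t\geq 0$. This is legitimate on the infinite graph: the degree is bounded by $V$, the coefficients are bounded, and $I$ is bounded by $\max(\ol\sigma+\ol\iota)$ while $J$ grows at most exponentially in $d(n)$, so a weighted maximum principle applies to $I-J$. We then integrate
$$
0\leq S_n^0-S_n(t)=\int_0^t S_n\sum_i\alpha_{i,n}I_i\leq \ol\sigma\,\ol\alpha\, V A e^{\lambda}\int_0^t e^{\lambda(c's-d(n))}ds\leq Ce^{\lambda(c't-d(n))},
$$
using $|d(i)-d(n)|\leq1$ for $i\sim n$. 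On $\{d(n)\geq ct\}$ the right-hand side is at most $Ce^{-\lambda(c-c')t}\to0$, which gives the first claim.

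\textbf{Lower bound on the speed.} Here we exploit local stifling. We have $(\ln S_n)'=-\sum_i\alpha_{i,n}I_i$, so
$$
S_n(t)=S_n^0\exp\Big(-\sum_i\alpha_{i,n}\int_0^tI_i\Big).
$$
It therefore suffices to show that for every $\e>0$ and $c<c_\star$ there is a $T$ such that $\int_0^tI_i\geq(1-\e)/\beta_i$ whenever $d(i)\leq ct+C$ and $t\geq T$. The upper bound in Theorem \ref{th cv} says exactly that $\int_0^\infty I_i\geq 1/\beta_i$, and I would reuse its proof. For a single node $m$ with $u_m=P_m-S_m=I_m+R_m$, the relation $R_m'=\beta_mI_mu_m$, together with $I_m\to0$, forces the cumulative quantity $\beta_m\int I_m$ to reach essentially $1$ once $u_m$ has become order one. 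I would make this quantitative: a lower bound on the incoming flux $S_m\sum_n\alpha_{n,m}I_n$ over a time window should yield a lower bound on $\beta_m\int_0^t I_m$ at the end of that window, with a controlled delay.

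The propagation step then goes along geodesic paths from $0$. Suppose node $n$ has accumulated $\int_0^t I_n\geq\theta/\beta_n$ by time $t_n$. Using $S_m\geq S_m^0e^{-C}$ up to that time (by the lower bound of Theorem \ref{th cv}), the neighbour $m$ receives total influx at least $\alpha_{n,m}S_m^0\theta/\beta_n$. Converting this into $\beta_m\int I_m$ introduces the factor $\frac{\beta_m}{\beta_n}\alpha_{n,m}S_m^0$, which is why $c_\star$ is the infimum of this quantity over adjacent pairs. The aim is to show that the threshold time satisfies $t_m\leq t_n+1/c+o(1)$ uniformly in $n$. Iterating along paths of length $d(n)\leq ct$ gives the result, after a final $\e$-argument to pass from threshold $\theta$ to $1-\e$.

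\textbf{Main obstacle.} The hardest step is this one-step estimate: a delay bound that is uniform over all nodes and does not degrade along the iteration, so that errors do not accumulate over $d(n)$ steps. My first attempt would be to build an explicit subsolution for the pair $(u_m,\int_0^tI_m)$ driven by a prescribed lower bound on the neighbour's $I_n$, and to close the induction with a front-like ansatz $\int_0^tI_n\geq\frac{1}{\beta_n}\phi(t-d(n)/c)$ for a fixed profile $\phi$ increasing from $0$ to $1-\e$.
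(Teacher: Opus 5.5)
Your upper-bound argument is correct and is essentially the paper's (Proposition~\ref{prop up speed}, using the weighted comparison principle of Proposition~\ref{comp}). Integrating $S_n^0-S_n$ directly instead of using $S_n=S_n^0e^{-\sum_k\alpha_{k,n}u_k}$ is an inessential variation.

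The lower bound, however, is not proved. Your reduction to $\beta_i\int_0^tI_i\ge1-\varepsilon$ uniformly on $d(i)\le ct+1$ is right, and your heuristic finds the right constant $\frac{\beta_m}{\beta_n}\alpha_{n,m}S_m^0$. But the one-step delay estimate, which you yourself call the main obstacle, and the ``final $\varepsilon$-argument'' are both left open, so this part is still a heuristic. Two points in the sketch also go the wrong way. First, a subsolution ``driven by a lower bound on the neighbour's $I_n$'' is hard to iterate, because $I_n$ is a transient pulse; the monotone quantity $w_n(t):=\int_0^tI_n$ (the paper's $u_n$) iterates easily. Second, bounding the influx through $S_m\ge S_m^0e^{-C}$ loses a factor $e^{-C}$ and cannot give $c_\star$. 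Use instead the exact identity $S_m^0-S_m(t)=S_m^0f\bigl(\sum_i\alpha_{i,m}w_i(t)\bigr)$, with $f(x)=1-e^{-x}$.

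The missing ingredient is the finite-time version of your own observation on $R_m'$. Set $A_m:=I_m+R_m=P_m-S_m$ (your $u_m$), which is nondecreasing in $t$. Then $R_m(t)=\beta_m\int_0^tI_mA_m\le\beta_mA_m(t)w_m(t)$, hence
\[
w_m'(t)=A_m(t)-R_m(t)\ge A_m(t)\bigl(1-\beta_mw_m(t)\bigr),\qquad A_m(t)\ge S_m^0f\Bigl(\sum_i\alpha_{i,m}w_i(t)\Bigr),
\]
which is the paper's \eqref{sursol}.

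Fix a small $\eta\in(0,1)$ and let $\tau_n$ be the first time at which $\beta_nw_n\ge\eta$ ($\tau_0<\infty$ by Theorem~\ref{th cv}). If $m\sim n$, then on $[\tau_n,\tau_m]$ (when nonempty) one has $w_m'\ge S_m^0f(\alpha_{n,m}\eta/\beta_n)(1-\eta)$. Hence $\tau_m\le\tau_n+1/c_\eta$, with $c_\eta$ as in \eqref{c eta}. Since $f(x)/x\to1$, we get $\liminf_{\eta\to0}c_\eta\ge c_\star$, which is why the threshold must be small. This per-step bound has no $o(1)$ term, so iterating along a geodesic gives $\tau_n\le\tau_0+d(n)/c_\eta$ with no accumulation of errors (Lemma~\ref{lem speed}).

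The upgrade from level $\eta$ to level $1$ uses the same inequality. For $t\ge\tau_n$ we have $\sum_i\alpha_{i,n}w_i\ge\alpha_{n,n}w_n\ge\underline{\alpha}\eta/\overline{\beta}$, so $w_n'\ge\gamma(1-\beta_nw_n)$ while $\beta_nw_n\le1$, with $\gamma=\underline{\sigma}f(\underline{\alpha}\eta/\overline{\beta})$. Hence $\beta_nw_n(t)\ge1-e^{-\gamma\underline{\beta}(t-\tau_n)}$. Moreover $t-\tau_n\ge t(1-c/c_\eta)-\tau_0\to+\infty$ uniformly on $d(n)\le ct$ when $c<c_\eta$. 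The paper does this last step by translating in time and passing to a limit via Arzel\`a--Ascoli (Proposition~\ref{prop low speed}); the direct Gr\"onwall bound above works equally well.
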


\noindent {\bf Interpretation.} Theorem \ref{th speed} tells us how fast the rate of adoption evolves through the graph. Indeed, it implies that
$$
\sup_{d(n)\geq ct} r_n(t) \underset{t\to+\infty}{\longrightarrow}0,\quad \forall c >c^\star
$$
and
$$
\liminf_{t\to + \infty}\inf_{d(n)\leq c t} r_n(t) \geq \ul r,\quad \forall c \in [0,c_\star[,
$$
where $\ul r$ is the lower bound for $r_n^\infty$ from the interpretation of Theorem \ref{th cv}.\\

In other words, the opinion spreads at speed at most $c^\star$ and at least with speed $c_\star$: indeed, imagine that there is an observer moving from one node to another on the graph. Assume that this observer starts at the node $n=0$ and always moves far away from this point with constant speed $c>c^\star$. By time $t$, he would have reached some node $n_t$, with $d(n_t,0) \approx ct \gg c^\star t$. Owing to Theorem \ref{th speed}, the rate of adoption at this node $r_n(t)$ would be close to $0$, that is, the opinion has not yet reached this point, and the observer will outrun the spread of the opinion.

On the other hand, if the observer moves with a speed $c<c_\star$, then the rate of adoption of the opinion will be at least $\ul r>0$ in his surroundings.\\

Again, without further hypotheses on the topology of the graph and on the parameters, one can not expect to get much more precise results on the spreading. However, we believe that, when the system is {\em homogeneous}, one could obtain more precise results.

\begin{remark}[Homogeneous case]\label{conj hom}
We say that the model is {\em homogeneous} when the following conditions hold true: there are $S^0,a,b>0$ and $R_\beta\geq 0$ such that
\begin{itemize}
    \item $S_n^0 = S^0$ for all $n\in \N$,
    \item $\alpha_{i,n} =\frac{a}{v_n}$, for all  $i,n$ such that $i\sim n$, where $v_n=\#\{i \ : \  i\sim n\}$ is the number of nodes adjacent to $n$ (recall that by convention in this paper, the node $n$ is always adjacent to itself).
    \item $\beta_{i,n} = \frac{b}{\rho_n}$, for all $i,n$  such that $d(i,n)\leq R_\beta$, where $\rho_n = \#\{i \ : \ d(i,n)\leq R_\beta\}$.
\end{itemize}

In this setting, the initial population is homogeneous, and each community is influenced similarly by each neighboring community (these assumptions were made in the simulation shown above in Section \ref{sec  DK net}). In this case, the system \eqref{syst} rewrites as

\begin{equation*}
\left\{
\begin{array}{rll}
     S'_n(t) &= - S_n(t)\left(\frac{a}{v_n}\sum_{i\sim n} I_i(t) \right),\quad &t>0,\ n\in \N,\\
    I'_n(t) &=  S_n(t)\left(\frac{a}{v_n}\sum_{i\sim n} I_i(t) \right) - I_n(t)\left(\frac{b}{\rho_n}\sum_{i} (P_i - S_i(t)) \right),\quad &t>0,\ n\in \N.
\end{array}
\right.
\end{equation*}
If these homogeneity hypotheses hold true and if the stifling is local (that is, $R_\beta=0$), then we conjecture that there is $S^\infty>0$ (independent of $n$) such that 
    $$
    S_n^\infty \underset{d(n)\to+\infty}{\longrightarrow}S^\infty.
    $$
    We leave the proof of this fact as an open question for future works.
\end{remark}

The formulas for $c_\star,c^\star$ in Theorem \ref{th speed} are rather general. When working with simpler graphs $G$, they become more explicit.

\begin{corollary}
Under the hypotheses of Section \ref{sec hyp} and of Theorem \ref{th speed}, we have the following estimates on the speeds $c_\star,c^\star$ from Theorem \ref{th speed}.
\begin{itemize}
    \item For any graph $G$, we always have the general estimates
    $$
    \frac{\ul \beta}{\ol \beta}\ul \alpha \ul \sigma \leq c_\star \leq c^\star \leq \ol \alpha \ol \sigma V.
    $$

    \item For any graph $G$, if the homogeneity hypotheses of Remark \ref{conj hom} hold true, then we have
    $$
   \frac{1}{V}a  S^0 \leq c_\star  \ \text{and}\   c^\star = \theta a S^0 ,
    $$
    where $\theta = \inf_{\lambda>0}\sup_n \left(\frac{\frac{1}{v_n}\sum_{i\sim n}e^{\lambda(d(n)-d(i))}}{\lambda}\right)$.
    
    \item Consider the case where the graph $G$ is a line and that the homogeneity hypotheses of Remark \ref{conj hom} hold true. 
Then
$$
c_\star =\frac{1}{3}a S^0  \ \text{and }\ c^\star = z a S^0,
$$
where $z = \inf_{\lambda>0} \frac{1+e^\lambda + e^{-\lambda}}{3\lambda}\approx 1.3$.

\end{itemize}

\end{corollary}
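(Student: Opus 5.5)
The plan is to bound the explicit formulas for $c_\star$ and $c^\star$ from Theorem \ref{th speed} directly, using only the hypotheses of Section \ref{sec hyp}. The one nontrivial inequality, $c_\star\leq c^\star$, will be deduced from Theorem \ref{th speed} itself by contradiction.

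\emph{General estimates.} If $d(n,m)\leq 1$, then $n\sim m$ (recall $n\sim n$), so $\alpha_{n,m}\geq\ul\alpha$. Also $\beta_m=\beta_{m,m}\in[\ul\beta,\ol\beta]$ and $S_m^0\geq\ul\sigma$. Hence every term in the infimum defining $c_\star$ is at least $\frac{\ul\beta}{\ol\beta}\ul\alpha\ul\sigma$.

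For the upper bound on $c^\star$, the sum over $i$ contains at most $V$ nonzero terms, all with $i\sim n$. For those terms, $d(n)-d(i)\leq 1$ by the triangle inequality. So for every $\lambda>0$ the quantity inside the $\sup_n$ is at most $\ol\sigma\,\ol\alpha\, V\, e^{\lambda}/\lambda$. Taking $\lambda=1$ gives $c^\star\leq e\,\ol\alpha\ol\sigma V$. To remove the factor $e$ one would need a finer count of the neighbours with $d(i)=d(n)-1$. I expect that matching the exact constant $\ol\alpha\ol\sigma V$ as stated is the delicate point. I would first check whether it follows from a refined bound, or whether the statement should carry an absolute constant.

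\emph{The inequality $c_\star\leq c^\star$.} Suppose $c^\star<c_\star$ and fix $c^\star<c<c_\star$. Since $G$ is infinite, connected and of bounded degree, $d$ is unbounded and takes every integer value along a geodesic from $0$. So for each large $t$ there is a node $n_t$ with $d(n_t)=\lceil ct\rceil$. The first conclusion of Theorem \ref{th speed}, applied with some $c'\in(c^\star,c)$, gives $S_{n_t}(t)-S_{n_t}^0\to0$. The second conclusion, applied with some $c''\in(c,c_\star)$, gives
\[
\limsup_{t\to+\infty}\Bigl(S_{n_t}(t)-S_{n_t}^0e^{-\sum_i\alpha_{i,n_t}/\beta_i}\Bigr)\leq 0 .
\]
But $S_n^0\bigl(1-e^{-\sum_i\alpha_{i,n}/\beta_i}\bigr)\geq\ul\sigma\bigl(1-e^{-\ul\alpha/\ol\beta}\bigr)>0$, uniformly in $n$, because the term $i=n$ alone contributes $\alpha_{n,n}/\beta_n\geq\ul\alpha/\ol\beta$. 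These two conclusions are therefore incompatible, which proves $c_\star\leq c^\star$.

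\emph{Homogeneous case.} Here the stifling is local, so $\beta_m=b$ for all $m$ and the ratio $\beta_m/\beta_n$ equals $1$. Moreover $\alpha_{n,m}S_m^0=aS^0/v_m\geq aS^0/V$, which gives $c_\star\geq\frac1V aS^0$. Substituting $S_n^0=S^0$ and $\alpha_{i,n}=a/v_n$ into the definition of $c^\star$ and factoring out $aS^0$ gives exactly $c^\star=\theta aS^0$.

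\emph{The line.} Take the nodes $0,1,2,\dots$ in order along the line, so $v_0=2$ and $v_n=3$ for $n\geq1$. Then $c_\star=\inf_m aS^0/v_m=aS^0/3$, and the value $3$ is attained.

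For $c^\star$, a node $n\geq1$ contributes $\frac{e^{\lambda}+1+e^{-\lambda}}{3\lambda}$, using $d(n)-d(n\pm1)=\mp1$. The node $0$ contributes $\frac{1+e^{-\lambda}}{2\lambda}$. This is no larger than the former, because
\[
3\bigl(1+e^{-\lambda}\bigr)\leq 2\bigl(1+e^{\lambda}+e^{-\lambda}\bigr)\iff 1+e^{-\lambda}\leq 2e^{\lambda},
\]
which holds for all $\lambda>0$. Hence the supremum equals $\frac{1+e^\lambda+e^{-\lambda}}{3\lambda}$, and $c^\star=zaS^0$. The numerical value $z\approx1.3$ comes from a one-variable minimization.
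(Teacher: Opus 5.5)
The paper states this corollary without proof, so your proposal can only be checked against the definitions of $c_\star$ and $c^\star$.

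\textbf{The step you left open.} It cannot be closed, because the bound $c^\star\le\ol\alpha\ol\sigma V$ is false. Your suspicion that an extra constant is needed is correct.

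Take the half-line $0,1,2,\dots$ with $\alpha_{i,n}=\alpha$ for all $i\sim n$, $S_n^0=S^0$, and any admissible local $\beta_n$. Then $V=3$, $\ol\alpha=\alpha$ and $\ol\sigma=S^0$. Evaluating the supremum at $n=1$ gives, for every $\lambda>0$,
\[
\sup_n\frac{S_n^0\sum_i\alpha_{i,n}e^{\lambda(d(n)-d(i))}}{\lambda}\ \geq\ \alpha S^0\,\frac{1+2\cosh\lambda}{\lambda}\ \geq\ \alpha S^0\,\frac{3+\lambda^2}{\lambda}\ \geq\ 2\sqrt{3}\,\alpha S^0 .
\]
Hence $c^\star\geq 2\sqrt3\,\alpha S^0>3\alpha S^0=\ol\alpha\ol\sigma V$. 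The homogeneous bi-infinite line is another counterexample: there $c^\star=z\,aS^0>aS^0=\ol\alpha\ol\sigma V$. The correct general statement is the one you actually proved, $c^\star\le e\,\ol\alpha\ol\sigma V$.

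\textbf{The other parts.} Your lower bound on $c_\star$, the homogeneous case and the line computation are correct.

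For the line, your labelling with $0$ at the endpoint (or a bi-infinite line) is genuinely needed. Since $c^\star$ takes a supremum over all nodes, if $0$ were an interior node of a half-line, the endpoint would contribute $\frac{1+e^\lambda}{2\lambda}$. This exceeds $\frac{1+e^\lambda+e^{-\lambda}}{3\lambda}$ for every $\lambda>0$, so $c^\star$ would change.

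\textbf{The inequality $c_\star\le c^\star$.} Your derivation from Theorem~\ref{th speed} is valid, with one caveat: it must be run with a datum $I^0\not\equiv 0$. If $I^0\equiv0$, then $S_n(t)\equiv S_n^0$ and the second conclusion of Theorem~\ref{th speed} fails. This is harmless, since $c_\star$ and $c^\star$ do not depend on $I^0$.

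A purely algebraic route is shorter and gives more. Suppose $d(n)\ge1$ and $m\sim n$ with $d(m)=d(n)-1$.
\begin{itemize}
\item Keeping only the term $i=m$ in the definition of $c^\star$ gives $c^\star\ge\inf_{\lambda>0}S_n^0\alpha_{m,n}e^\lambda/\lambda=e\,\alpha_{m,n}S_n^0$.
\item The pair $(m,n)$ in the definition of $c_\star$ gives $c_\star\le\frac{\beta_n}{\beta_m}\alpha_{m,n}S_n^0$.
\end{itemize}
Multiply these along a geodesic $0=n_0,n_1,\dots,n_J$. This yields $c_\star^J\le\frac{\beta_{n_J}}{\beta_{n_0}}\bigl(c^\star/e\bigr)^J\le\frac{\ol\beta}{\ul\beta}\bigl(c^\star/e\bigr)^J$, and letting $J\to\infty$ gives $c_\star\le c^\star/e$.

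This uses only the geodesics you already invoke. It makes the inequality independent of the dynamics and of the proof of Theorem~\ref{th speed}.
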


\begin{remark}
The formula for $c^\star$ is reminiscent of the formula for the speed of spreading for discrete {\em KPP equations}. These are equations of the form
$$
u'_n(t) = u_{n-1}(t) - 2 u_n(t)+u_{n+1}(t) + u_n(t)(1-u_n(t)),\quad t>0,\ n\in \Z.
$$
Such equations were introduced and studied in the continuous setting by Kolmogorov, Petrovski and Piskunov \cite{KolmogorovStudy37} and Fisher \cite{fisher1937wave}. The discrete setting was considered in \cite{zinner1993traveling} (see also \cite{besse2023logarithmic} for a recent account on this topic). In this case, one can prove that the solution converges to $1$ and propagates with a speed
$$
c_{KPP} = \min_{\lambda>0}\frac{e^\lambda - 1 + e^{-\lambda}}{\lambda}.
$$
\end{remark}

We conclude this paper with a study of the model \eqref{syst} with non-local stifling. We show that, even in this case, the opinion always spreads in a weaker sense: any zone where the opinion is never adopted can not be ``too large''.
\begin{theorem}\label{th spread non loc}
Let $(S_n,I_n)$ be the solution of \eqref{syst} with initial datum $(S_n^0,I_n^0)$ satisfying the hypotheses of Section \ref{sec hyp}.

    Then, for all $n\in \N$,
    $$
    I_n(t)\underset{t\to+\infty}{\longrightarrow}0, \quad S_n(t)\underset{t\to+\infty}{\longrightarrow}S_n^\infty,
    $$
    and
    $$
    \liminf_{d(n)\to+\infty} \frac{S_n^\infty}{S_n^0}<1.
    $$
\end{theorem}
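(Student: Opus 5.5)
Convergence of $S_n$ is immediate: $S_n$ is non-increasing and non-negative, so $S_n(t)\to S_n^\infty\geq 0$. For $I_n$, the plan is to show that $I_n$ is integrable in time and uniformly continuous, and then apply Barbalat's lemma.

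\emph{Integrability.} Since $\beta_{n,n}\geq \ul\beta$ and all terms in the stifling sum are non-negative, $R_n'\geq \ul\beta I_n (P_n-S_n)$. Moreover $P_n - S_n(t)\geq P_n - S_n(\tau)$ for $t\geq\tau$. Fix $n$ and pick $\tau$ with $S_n(\tau)<P_n$. This is possible if $I_n^0>0$, since then $P_n-S_n\ge I_n^0>0$ from time $0$. If $I_n^0=0$, it is still possible once the opinion reaches $n$ (by connectedness and $\alpha_{i,n}>0$ along paths, $S_n$ eventually decreases strictly). Then $\ul\beta(P_n-S_n(\tau))\int_\tau^\infty I_n\leq R_n(\infty)\leq P_n$, so $I_n\in L^1(0,\infty)$. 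The derivatives $I_n'$ are bounded, because all quantities lie in $[0,\ol\sigma+\ol\iota]$ and the sums are finite (bounded degree and finite range $R_\beta$). Hence $I_n$ is uniformly continuous, and $I_n\to 0$.

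For the liminf statement, argue by contradiction. Suppose $\liminf_{d(n)\to\infty} S_n^\infty/S_n^0\geq 1$. Since $S_n^\infty\leq S_n^0$, this means $S_n^\infty/S_n^0\to1$ as $d(n)\to\infty$. The key identity comes from integrating $(\ln S_n)'=-\sum_i\alpha_{i,n}I_i$:
$$\ln\frac{S_n^0}{S_n^\infty}=\sum_i \alpha_{i,n}\int_0^\infty I_i \geq \ul\alpha \int_0^\infty I_n.$$
Setting $J_n:=\int_0^\infty I_n$, the assumption gives $J_n\to 0$ as $d(n)\to\infty$. On the other hand, integrating the $R_n$ equation gives
$$P_n-S_n^\infty = I_n^0+\int_0^\infty S_n\sum_i\alpha_{i,n}I_i \,dt\quad\text{and}\quad P_n-S_n^\infty= R_n^\infty=\int_0^\infty I_n\sum_i\beta_{i,n}(P_i-S_i)\,dt.$$
For $I_n^0=0$, bound the right-hand side of the second identity above by $J_n\,\ol\beta\,\#\{i: d(i,n)\le R_\beta\}\,\sup_{d(i,n)\leq R_\beta}(P_i-S_i^\infty)$. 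Writing $\varepsilon_n:=P_n-S_n^\infty=S_n^0-S_n^\infty$ (small far away) and $\ul S:=\ul\sigma/2$ (a lower bound on $S_n(t)$ for far nodes, since $S_n\geq S_n^\infty$ and $S_n^\infty\geq S_n^0/2\geq \ul\sigma/2$ there), this gives
$$\varepsilon_n\geq \ul S\sum_i\alpha_{i,n}J_i\geq \ul S\,\ul\alpha \sum_{i\sim n}J_i,\qquad \varepsilon_n\leq C J_n\max_{d(i,n)\le R_\beta}\varepsilon_i,$$
with $C$ uniform. Combining, $\sum_{i\sim n}J_i\leq C' J_n M_n$, where $M_n:=\max_{d(i,n)\le R_\beta}\varepsilon_i$. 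Iterating the lower bound once more gives $\varepsilon_i\le C''\max_{d(j,n)\leq R_\beta+1}\ldots$, but it is simpler to use the upper bound $\varepsilon_i\le C\sum_{j\sim i}J_j$ (from the first identity with $S\le \ol\sigma$). Define $A(N):=\sup_{d(n)\geq N}J_n$, which tends to $0$. Then $J_n\le\sum_{i\sim n}J_i\leq C''' J_n A(d(n)-R_\beta-1)$, so for $d(n)$ large, $C'''A<1$ forces $J_n=0$. This means $I_n\equiv0$, hence $S_n\equiv S_n^0$ for all far nodes.

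Finally, this contradicts propagation. Take a far node $n$ with $I_n\equiv 0$ and a neighbor $m$ closer to the origin along a path from a node where $I^0>0$. From $I_n'=S_n\sum_i\alpha_{i,n}I_i\equiv0$ we get $I_m\equiv0$ for all neighbors $m$. Propagating back along the path reaches a node with $I^0>0$, which is a contradiction.

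The main obstacle is controlling the superlinear bootstrap uniformly; the above inequality $J_n\lesssim J_n\cdot(\text{small})$ is the key, and care is needed to make the constants uniform, using the bounded degree $V$ and the bounds $\ul\sigma$, $\ul\alpha$, $\ol\beta$, $R_\beta$.
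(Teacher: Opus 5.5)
Your proof is correct, and for the $\liminf$ part it takes a different route from the paper's.

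The paper works with the integro-differential equation of Lemma~\ref{lemma un}. Assuming $U_n\to0$, it bounds the memory term by $u_n(t)$ times a small final quantity. This yields, for far nodes, the linear instability $u_n'\geq \frac{\ul\sigma(1-\eta)\ul\alpha}{2}u_n$, which contradicts the boundedness of $u_n$ from Proposition~\ref{prop u}.

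You instead pass directly to $t=+\infty$ and use only two final-size identities, $S_n^0-S_n^\infty=\int_0^\infty S_n\sum_i\alpha_{i,n}I_i$ and $R_n^\infty=\int_0^\infty I_n\sum_i\beta_{i,n}(P_i-S_i)$. The first bounds $\varepsilon_n$ below linearly by the neighbouring $J_i$. The second bounds it above by $J_n$ times the nearby $\varepsilon_i$. Together they give the static inequality $J_n\leq C\,J_n\,A(d(n)-R_\beta-1)$, hence $J_n=0$ far away. You then contradict this by propagating $I\equiv0$ back along a path to a node with $I^0>0$.

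Both arguments rest on the same mechanism: near the ignorant state the gain is linear while the stifling loss is quadratic. Your upper bound $\varepsilon_n\leq J_n\sum_i\beta_{i,n}\varepsilon_i$ is exactly the paper's inequality evaluated at $t=+\infty$, where $u_n'\to0$.

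What your version buys:
\begin{itemize}
\item It avoids Lemma~\ref{lemma un} altogether.
\item It makes explicit a positivity step that the paper leaves tacit. The paper's conclusion ``$u_n'\geq c\,u_n$, hence $u_n\to+\infty$'' needs $u_n(t_0)>0$ for some $t_0$, which is precisely the connectedness fact you prove.
\end{itemize}
The paper's version is shorter, given the $u$-formulation already set up for the local case.

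For $I_n\to0$, your argument is a valid alternative to the paper's. You combine a direct integrability bound via $R_n'\geq\ul\beta I_n(P_n-S_n(\tau))$ with an $L^1$-plus-Lipschitz (Barbalat) argument. The paper instead uses the monotonicity of $S_n+I_n$ together with Proposition~\ref{prop u}.

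You should delete the abandoned sentence ``Iterating the lower bound once more gives\ldots'', which plays no role in the argument.
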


\noindent {\bf Interpretation.} This result shows that the opinion still propagates, but in a sense weaker than in Theorem \ref{th cv}. Indeed, Theorem \ref{th spread non loc} tells us that $r_n^\infty$, the final rate of adoption, is such that $\limsup_{d(n)\to +\infty} r_n^\infty >0$. This means that there will be some nodes $n$, with $d(n)$ arbitrarily large (that is, nodes arbitrarily far away from the initial nodes where the opinion is present at time $t=0$) where the opinion is eventually known. 

It is weaker than Theorem \ref{th cv} in the sense that there may be some nodes $n$ where $r_n^\infty \approx 0$, that is, nodes where the opinion is never adopted.\\

We will conclude this paper with an open question, that suggests that the non-local stifling can induce the emergence of patterns in the solutions.

\begin{conj}\label{conj patterns}
    Consider the system \eqref{syst} under the homogeneity hypotheses of Remark \ref{conj hom}. In particular, we have $\beta_{i,n} = \frac{b}{\rho_n}$, $\forall i,n$  such that $d(i,n)\leq R_\beta$, where $\rho_n = \#\{i \ : \ d(i,n)\leq R_\beta\}$.

    We conjecture that there are some values of $a,b, R_\beta$ such that 
    $$
    \limsup_{n\to+\infty}S_n^\infty> \liminf_{n\to+\infty} S_n^\infty.
    $$
    
\end{conj}

The previous Remark \ref{conj hom} conjectures that, if $R_\beta=0$ (the stifling is purely local), then $S_n^\infty\underset{d(n)\to+\infty}{\longrightarrow} S^\infty$ for some $S^\infty$: the opinion is adopted in the same proportion in each node (at least far away from the zone where the opinion is initially present). Conjecture \ref{conj patterns} claims that this becomes false when the individuals take into account the nodes that are far away in their stifling mechanism. 

We present in Section \ref{sec non loc} some simulations that illustrate this conjecture, that we leave as an open problem.

\section{The case with purely local stifling}

The goal of this section is to prove Theorem \ref{th cv} and Theorem \ref{th speed}.

\subsection{Rewriting the system}

A key idea in our analysis will be to work with auxiliary functions rather than with the functions $(S_n,I_n)$ solutions of \eqref{syst loc}. We define
\begin{equation}\label{def u}
   u_n(t)=\int_0^t I_n(\tau)d\tau,\quad n\in \N.
\end{equation}
These new functions represent in some sense the ``history'' of the propagation of the opinion. In mathematical epidemiology, these functions are called  the {\em force of infection}, and it is classical to work with them.\\

We start with proving that the functions $(u_n)_{n\in\N}$ are solutions of a system of ODEs with non-local in time terms. For later use, we state the result for the general model \eqref{syst} with non-local stifling. We specify the result to the local stifling case \eqref{syst loc} just after.

\begin{lemma}\label{lemma un}
Let $(S_n,I_n)$ be the solution of \eqref{syst} arising from an initial datum $(S_n^0,I_n^0)$ that satisfies the hypotheses of Section \ref{sec hyp}. The functions $(u_n)_{n\in\N}$ defined by \eqref{def u} satisfy, for all $t>0$,
    \begin{multline}\label{eq u}
u_n'(t) = S_n^0f\left(\sum_k \alpha_{k,n}u_k(t)\right) - \int_0^t u_n'(\tau)\left(\sum_i \beta_{i,n}S_i^0 f\left(\sum_k \alpha_{k,i}u_k(\tau)\right) \right)d\tau \\ - u_n(t)\left(\sum_i \beta_{i,n}I_i^0\right) + I_n^0.
\end{multline}
    where 
    $$
    f(x) = 1-e^{-x}.
    $$
\end{lemma}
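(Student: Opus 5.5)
First, I will solve the $S_n$ equation explicitly. Since $S_n' = -S_n\sum_k \alpha_{k,n} I_k$ and $u_k' = I_k$, the quantity $\ln S_n + \sum_k \alpha_{k,n}u_k$ is constant in time. With $u_k(0)=0$ this gives
$$
S_n(t) = S_n^0 e^{-\sum_k \alpha_{k,n}u_k(t)}, \qquad\text{hence}\qquad P_n - S_n(t) = I_n^0 + S_n^0 f\Big(\sum_k \alpha_{k,n}u_k(t)\Big).
$$
The sums are finite because $\alpha_{k,n}\neq 0$ only for the at most $V$ neighbours of $n$. If $S_n^0=0$ the formula still holds trivially; under our hypotheses $S_n^0\ge \ul\sigma>0$ anyway.

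Next, I will integrate the $I_n$ equation of \eqref{syst} from $0$ to $t$. The infection term integrates to $\int_0^t S_n\sum_k\alpha_{k,n}I_k = S_n^0 - S_n(t)$, which equals $S_n^0 f(\sum_k\alpha_{k,n}u_k(t))$ by the formula above. The left-hand side gives $I_n(t)-I_n^0 = u_n'(t) - I_n^0$. For the stifling term, I substitute the expression for $P_i - S_i$:
$$
\int_0^t I_n(\tau)\sum_i\beta_{i,n}\big(P_i - S_i(\tau)\big)\,d\tau = \int_0^t u_n'(\tau)\sum_i \beta_{i,n}S_i^0 f\Big(\sum_k\alpha_{k,i}u_k(\tau)\Big)d\tau + u_n(t)\sum_i\beta_{i,n}I_i^0 .
$$
The second piece uses $\int_0^t I_n = u_n(t)$. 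Collecting the terms yields exactly \eqref{eq u}.

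The only point needing care is justifying these manipulations on an infinite system. The sums over $i$ are finite, since $\beta_{i,n}\neq0$ only when $d(i,n)\le R_\beta$ and the graph has bounded degree. The solutions are classical and nonnegative, and $S_n, I_n$ are bounded by $P_n$, so the integrands are continuous and the interchange of finite sums and integrals is routine. I expect no real obstacle; the key observation is the explicit exponential formula for $S_n$ in terms of $u$.
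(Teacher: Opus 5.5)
Your proposal is correct and follows essentially the same route as the paper: derive $S_n(t)=S_n^0e^{-\sum_k\alpha_{k,n}u_k(t)}$, substitute $P_i-S_i(\tau)=I_i^0+S_i^0f\bigl(\sum_k\alpha_{k,i}u_k(\tau)\bigr)$ into the stifling term, and integrate the $I_n$ equation from $0$ to $t$. The paper phrases the last step as integrating an equation for $u_n''$, but this is the same computation.
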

\begin{proof}
The equation for $I_n$ in \eqref{syst} rewrites (remembering that $P_i = S_i^0+I_i^0$)
$$
u_n''(t) = -S_n'(t) - u_n'(t)\left(\sum_i \beta_{i,n}(S_i^0 - S_i(t)) \right) - u_n'(t)\left(\sum_i \beta_{i,n}I_i^0\right).
$$
On the other hand, the equation for $S_n$ in \eqref{syst} yields that $S_n'(t) = -S_n(t)(\sum_i \alpha_{i,n}u_i'(t) )$, which we can integrate to find
\begin{equation}\label{Su}
    S_n(t) = S_n^0\exp\left(-\sum_i \alpha_{i,n}u_i(t)\right).
\end{equation}
Using this expression in the equation above, we get
$$
u_n''(t) = -S_n'(t) - u_n'(t)\left(\sum_i \beta_{i,n}S_i^0\left( 1- \exp\left(-\sum_k \alpha_{k,i}u_k(t)\right)\right) \right) - u_n'(t)\left(\sum_i \beta_{i,n}I_i^0\right)
$$
Now, integrating between time $t=0$ and $t$, and using $f(x)=1-e^{-x}$, this gives
$$
u_n'(t) = S_n^0-S_n(t) - \int_0^t u_n'(\tau)\left(\sum_i \beta_{i,n}S_i^0f\left(\sum_k \alpha_{k,i}u_k(\tau)\right) \right)d\tau - u_n(t)\left(\sum_i \beta_{i,n}I_i^0\right) + I_n^0,
$$
and, using the expression \eqref{Su} again, we get the result.
\end{proof}
In particular, when we consider the case with local stifling \eqref{syst loc} (that is, when the parameters $(\beta_{k,n})$ satisfy \eqref{beta local}), we have
\begin{corollary} Let $(S_n,I_n)$ be solution of \eqref{syst loc} (the system with local stifling) arising from initial datum $(S_n^0,I_n^0)$ satisfying the hypotheses of Section \ref{sec hyp}. Let $(u_n)$ be defined by \eqref{def u}. We have
    \begin{equation}\label{eq u loc}
    u_n'(t) = S_n^0f\left(\sum_k \alpha_{k,n}u_k(\tau)\right) - \beta_n S_n^0\int_0^t u_n'(\tau) f\left(\sum_k \alpha_{k,n}u_k(\tau) \right)d\tau - u_n(t) \beta_n I_n^0 + I_n^0.    
    \end{equation}
\end{corollary}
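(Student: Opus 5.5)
This is a direct specialization of Lemma \ref{lemma un}, so I would not redo the computation from scratch. The plan is to substitute the local stifling coefficients \eqref{beta local} into identity \eqref{eq u} and simplify the $\beta$-sums term by term.

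Under \eqref{beta local}, every sum $\sum_i \beta_{i,n} g_i$ collapses to $\beta_n g_n$, since only the index $i=n$ survives. Applying this term by term gives the following.

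\begin{itemize}
\item The first term $S_n^0 f\left(\sum_k \alpha_{k,n}u_k(t)\right)$ does not involve $\beta$ and stays as it is.
\item In the memory integral, $\sum_i \beta_{i,n}S_i^0 f\left(\sum_k \alpha_{k,i}u_k(\tau)\right)$ becomes $\beta_n S_n^0 f\left(\sum_k \alpha_{k,n}u_k(\tau)\right)$. Since $\beta_n S_n^0$ is constant in $\tau$, it can be pulled out of the integral, giving $\beta_n S_n^0\int_0^t u_n'(\tau) f\left(\sum_k\alpha_{k,n}u_k(\tau)\right)d\tau$.
\item The term $u_n(t)\sum_i\beta_{i,n}I_i^0$ becomes $\beta_n I_n^0 u_n(t)$.
\item The constant $I_n^0$ is unchanged.
\end{itemize}

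Collecting these terms yields \eqref{eq u loc}.

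There is no real obstacle; the only care needed is notational. In the statement, the argument of the first $f$ should read $u_k(t)$, not $u_k(\tau)$, exactly as in \eqref{eq u}; the $\tau$ appears to be a typo. One should also check that no other index $i\neq n$ contributes. That holds because \eqref{beta local} sets $\beta_{i,n}=0$ for all $i\neq n$, so the infinite sums are in fact finite and the manipulation is legitimate.
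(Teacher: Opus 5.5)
Your proposal is correct and matches the paper's route exactly: the corollary is obtained by substituting the local coefficients \eqref{beta local} into the identity \eqref{eq u} of Lemma \ref{lemma un}, so that each $\beta$-sum reduces to its $i=n$ term. You are also right that the $u_k(\tau)$ inside the first $f$ in \eqref{eq u loc} is a typo for $u_k(t)$.
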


The proof of our results will rely on the analysis of equation \eqref{eq u} (or \eqref{eq u loc} when considering the case with local stifling). Before diving into the proof, we mention some basic facts on the functions $(u_n)_{n\in\N}$.

\begin{prop}\label{prop u}
Let $(S_n,I_n)$ be the solution of \eqref{syst} arising from an initial datum $(S_n^0,I_n^0)$ satisfying the hypotheses of Section \ref{sec hyp}. Let $(u_n)$ be defined by \eqref{def u}.

    For each $n\in\N$, the function $t\mapsto u_n(t)$ is non-negative and increasing with respect to $t$.
    
    In addition, it is bounded from above, hence it converges when $t$ goes to $+\infty$.
\end{prop}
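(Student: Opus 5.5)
Non-negativity and monotonicity come directly from the definition \eqref{def u}. Since the solution of \eqref{syst} is non-negative (as recalled at the end of Section \ref{sec hyp}), we have $I_n\geq 0$. Hence $u_n(0)=0$ and $u_n'(t)=I_n(t)\geq 0$, so $u_n$ is non-negative and non-decreasing. Strict increase is not needed for convergence, so I would only claim that $u_n$ is non-decreasing.

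For the upper bound, the plan is to use the stifling equation for $R_n$ rather than the $u$-equation \eqref{eq u}. From \eqref{syst or} we have
$$
R_n'(t)=I_n(t)\sum_i \beta_{i,n}(I_i(t)+R_i(t))\geq \beta_{n,n} I_n(t)\big(I_n(t)+R_n(t)\big),
$$
because every term in the sum is non-negative. Let $A_n:=I_n+R_n=P_n-S_n$; it is non-decreasing since $S_n'\leq 0$. The key step is:
\begin{itemize}
\item If $I_n^0>0$, then $A_n(t)\geq A_n(0)=I_n^0>0$ for all $t$.
\item If $I_n^0=0$, either $I_n\equiv 0$ (then $u_n\equiv 0$ and we are done), or there is $t_0$ with $A_n(t_0)=:a_n>0$.
\end{itemize}
In the second case, for $t\geq t_0$ we get $R_n'(t)\geq \beta_{n,n}a_n I_n(t)$. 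Integrating and using $R_n\leq P_n$ gives
$$
u_n(t)\leq u_n(t_0)+\frac{P_n}{\beta_{n,n}a_n}.
$$
Here $\beta_{n,n}>0$ by the hypotheses of Section \ref{sec hyp}. Thus $u_n$ is bounded, and being monotone it converges.

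The main (minor) obstacle is the degenerate case where $A_n$ stays zero on an initial interval. It is handled by the dichotomy above, since $u_n$ is finite at any fixed time. No uniformity in $n$ is claimed at this stage, so this pointwise bound suffices.
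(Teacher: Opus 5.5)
Your proof is correct. It starts from the same inequality as the paper, $R_n'\geq \beta_{n,n}I_n(I_n+R_n)$ together with $R_n\leq P_n$, but you close it differently.

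The paper keeps the quadratic term. It uses the integrating factor $e^{-\beta_{n,n}u_n}$ to get $(R_ne^{-\beta_{n,n}u_n})'\geq \beta_{n,n}(u_n')^2e^{-\beta_{n,n}u_n}$. Integrating gives $\beta_{n,n}\int_0^t (u_n')^2e^{-\beta_{n,n}u_n}\,d\tau\leq P_ne^{-\beta_{n,n}u_n(t)}$. It then argues by contradiction: if $u_n\to+\infty$, the left-hand side would vanish for every $t$, forcing $u_n'\equiv 0$.

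You instead bound the factor $I_n+R_n$ from below by a constant. Since it equals $P_n-S_n$ it is non-decreasing, which linearizes the inequality into $R_n'\geq \beta_{n,n}a_nI_n$ for $t\geq t_0$.

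What each route buys:
\begin{itemize}
\item The paper's route needs no case distinction. The self-interaction term $I_n^2$ acts from time $0$ even though $R_n(0)=0$.
\item Your route needs the dichotomy on whether node $n$ is ever reached. In exchange it is direct rather than by contradiction, and it yields an explicit (though $n$-dependent and, at unseeded nodes, solution-dependent) bound $u_n\leq u_n(t_0)+P_n/(\beta_{n,n}a_n)$. At seeded nodes this reads simply $u_n\leq P_n/(\beta_{n,n}I_n^0)$.
\end{itemize}

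Your remark that only monotonicity, not strict increase, is needed for convergence is also apt.
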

\begin{proof}
The fact that each function $u_n(t)$ is increasing with respect to the variable $t$ and is non-negative comes from its definition, $u_n(t)=\int_0^t I_n(\tau)d\tau$, and from the fact the functions $I_n$ are non-negative.\\

Let us prove the boundedness. Observe that, from \eqref{syst or}, we have $R'_n(t) =  I_n(t)\sum_i\beta_{i,n} (I_i(t)+R_i(t)) \geq \beta_{n,n} I_n(t)(I_n(t)+R_n(t))$. Because $u_n'(t)=I_n(t)$, this yields
$$
(R_n(t)e^{-\beta_{n,n}u_n(t)})' \geq \beta_{n,n}(u_n'(t))^2e^{-\beta_{n,n}u_n(t)},
$$
hence, remembering that $R_n\leq P_n$, we have
$$
\beta_{n,n}\int_0^t(u_n'(\tau))^2e^{-\beta_{n,n}u_n(\tau)}d\tau \leq P_n e^{-\beta_{n,n}u_n(t)}.
$$
Therefore, if $u_n$ were not bounded from above, it would go to $+\infty$ as $t$ goes to $+\infty$. Hence, we would have $\int_0^t(u_n'(\tau))^2e^{-\beta_{n,n}u_n(\tau)}d\tau=0$ for all $t>0$, and then, for each $t>0$, $u_n'(t)=0$, which is a contradiction.
\end{proof}

Observe that the upper bound we obtained depends a priori on $n$, and might diverge for large values of $n$. We shall later obtain an explicit, uniform, upper bound for the functions $u_n$.

\subsection{Convergence of the solution in the local stifling case}

In the whole section, we assume that $(S_n,I_n)$ is the solution of the model with local stifling \eqref{syst loc} arising from an initial datum $(S_n^0,I_n^0)$ that satisfies the hypotheses of Section \ref{sec hyp}. We denote $(u_n)$ the functions defined by \eqref{def u}.\\

The goal of this section is to prove Theorem \ref{th cv}. To do so, we prove the following:
\begin{prop}\label{prop cv}
    For each $n$, there is $U_n>0$ such that, we have $u_n(t) \underset{t\to+\infty}{\longrightarrow} U_n$. Moreover, we have
    $$
    \frac{1}{ \beta_n}\leq U_n\leq \frac{1}{\beta_n}+ \frac{1}{\alpha_{n,n}}.
    $$
\end{prop}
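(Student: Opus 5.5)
The approach is to work with the original compartments $S_n,I_n,R_n$ and the stifling equation $R_n'=\beta_n I_n(I_n+R_n)=\beta_n u_n'(t)\,(P_n-S_n(t))$, integrated in time. Both bounds on $U_n$ then come from comparing $R_n^\infty=\lim_{t\to+\infty}R_n(t)$ with $\int_0^\infty u_n'(P_n-S_n)\,d\tau$.

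\textbf{Step 1 (convergence and $I_n\to0$).} Proposition \ref{prop u} gives that $u_n$ is nondecreasing and bounded, so $u_n(t)\to U_n<+\infty$.
\begin{itemize}
\item $U_n>0$. Since $G$ is connected and some $I_k^0>0$, every $I_n$ becomes positive for $t>0$. This follows by induction on the distance to the initial support, using $I_n'\geq S_n\alpha_{k,n}I_k-\beta_n P_n I_n$ and $S_n>0$ by \eqref{Su}.
\item $I_n\to0$. All quantities are bounded by $P_n$, and $V$, $\ol\alpha$, $\ol\beta$ are finite. Hence $I_n'$ is bounded, so $I_n$ is uniformly continuous. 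Together with $\int_0^\infty I_n=U_n<\infty$, this forces $I_n(t)\to0$.
\item $S_n$ and $R_n$ converge. $S_n$ is monotone, so $S_n\to S_n^\infty$. Then $R_n=P_n-S_n-I_n\to P_n-S_n^\infty=:R_n^\infty$.
\end{itemize}
This step also yields the first part of Theorem \ref{th cv}.

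\textbf{Step 2 (lower bound $U_n\geq 1/\beta_n$).} Integrate $R_n'=\beta_n u_n'(P_n-S_n)$ from $0$ to $+\infty$, with $R_n(0)=0$. Since $S_n$ is nonincreasing, $P_n-S_n(\tau)\leq P_n-S_n^\infty$, so
$$R_n^\infty\leq \beta_n U_n (P_n-S_n^\infty)=\beta_n U_n R_n^\infty.$$
We have $R_n^\infty=P_n-S_n^\infty>0$. Indeed, if $I_n^0>0$ this holds because $S_n^\infty\leq S_n^0<P_n$. Otherwise $S_n$ strictly decreases once some neighbour is infected, by Step 1, so $S_n^\infty<S_n^0=P_n$. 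Dividing by $R_n^\infty$ gives $U_n\geq 1/\beta_n$.

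\textbf{Step 3 (upper bound).} By \eqref{Su} and $\alpha_{k,n}\geq0$,
$$S_n(\tau)\leq S_n^0 e^{-\alpha_{n,n}u_n(\tau)},$$
hence $P_n-S_n(\tau)\geq I_n^0+S_n^0 f(\alpha_{n,n}u_n(\tau))$. Plugging this into the integrated stifling equation and substituting $s=u_n(\tau)$ gives
$$P_n\geq R_n^\infty\geq \beta_n\Big(I_n^0U_n+S_n^0\int_0^{U_n}f(\alpha_{n,n}s)\,ds\Big).$$
Next use
$$\int_0^{U}f(\alpha s)\,ds=U-\frac{1-e^{-\alpha U}}{\alpha}\geq U-\frac1\alpha .$$
Also $I_n^0U_n\geq I_n^0(U_n-1/\alpha_{n,n})$. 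Combining these, $P_n\geq\beta_nP_n(U_n-1/\alpha_{n,n})$, which is $U_n\leq 1/\beta_n+1/\alpha_{n,n}$.

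\textbf{Main obstacle.} The only delicate points are in Step 1. One is justifying $I_n\to0$ via uniform continuity plus integrability. The other is the strict positivity $R_n^\infty>0$, which needs the propagation of positivity through the connected graph. The inequalities in Steps 2 and 3 are routine once these facts are in place.
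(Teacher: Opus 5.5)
Your proof is correct and is essentially the paper's argument. The integrated stifling identity $R_n(t)=\beta_n\int_0^t u_n'(P_n-S_n)\,d\tau$ is exactly the integral equation \eqref{eq u loc} (since $I_n=P_n-S_n-R_n$), and your Steps 2 and 3 are the $t\to+\infty$ limits of \eqref{sursol} and \eqref{subsol}. They use the same comparisons as the paper (monotonicity of $S_n$, resp.\ $S_n\le S_n^0e^{-\alpha_{n,n}u_n}$) and the same connectivity argument for positivity. The only differences are cosmetic: you get $I_n\to0$ from uniform continuity plus integrability rather than from monotonicity of $S_n+I_n$, and your upper bound absorbs the $I_n^0$ term directly instead of invoking $U_n\ge 1/\beta_n$.
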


Before going to the proof of Proposition \ref{prop cv}, let us explain how it yields Theorem \ref{th cv}.

\begin{proof}[Proof of Theorem \ref{th cv}]
    Let $(S_n,I_n)$ be solution to \eqref{syst loc}. Let $(u_n)$ be defined by \eqref{def u}. It follows from \eqref{Su} that
    $$
    S_n \underset{t\to+\infty}{\longrightarrow} S_n^0 e^{-\sum_i\alpha_{i,n}U_i}.
    $$
    Denoting $S_n^\infty := S_n^0 e^{-\sum_i\alpha_{i,n}U_i}$, we have, thanks to Proposition \ref{prop cv},
    $$
    S_n^0 e^{-\sum_i \frac{\alpha_{i,n}}{\beta_i}}e^{-\sum_i \frac{\alpha_{i,n}}{\alpha_{i,i}}}\leq S_n^\infty \leq S_n^0 e^{-\sum_i \frac{\alpha_{i,n}}{\beta_i}}.
    $$
    This proves the convergence and the estimates on $S_n(t)$.\\

    Let us show that $I_n(t)\underset{t\to+\infty}{\longrightarrow}0$. To do so, it is sufficient to observe that, for all $t>0,n\in\N$,
    $$
    (I_n(t)+S_n(t))' = -\beta_n I_n(t)(R_n(t)+I_n(t))\leq 0,
    $$
    hence $S_n+I_n$ is non-increasing with respect to the $t$ variable. Because it is non-negative, it converges to a finite limit. Because $S_n$ itself converges, $I_n$ also converges to a finite limit. 

    We also know that $u_n(t) = \int_0^t I_n(\tau) d\tau$ converges to a finite limit owing to Proposition \ref{prop cv}, hence $I_n(t)$ must converge to $0$. This concludes the proof.
\end{proof}

We now turn to the proof of Proposition \ref{prop cv}. The key point is to prove that the functions $(u_n)_{n\in\N}$ are supersolutions and subsolutions of simpler equations.
\begin{corollary}
The functions $(u_n)$ satisfy the following differential inequalities
        \begin{equation}\label{sursol}
                u_n'(t) \geq S^0_n f\left(\sum_i \alpha_{i,n}u_i(t)\right)\left(1 - \beta_n   u_n(t)\right)   - \beta_n I_n^0 u_n(t) + I_n^0,\quad t>0,\ n\in \N,
        \end{equation}
    and
        \begin{equation}\label{subsol}
                 u_n'(t) \leq S^0_nf\left(\sum_i \alpha_{i,n}u_i(t)\right) + S_n^0  \beta_n\left( \frac{1 - e^{-\alpha_{n,n}u_n}}{\alpha_{n,n}} \right) - \beta_n S_n^0 u_n  - \beta_n I_n^0 u_n(t) + I_n^0,\quad t>0, \ n\in \N.
        \end{equation}
\end{corollary}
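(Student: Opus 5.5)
Both inequalities come from the integral term in \eqref{eq u loc}, which I will call
$$J_n(t):=\beta_n S_n^0\int_0^t u_n'(\tau) f\Big(\sum_k \alpha_{k,n}u_k(\tau)\Big)d\tau .$$
Everything else in \eqref{eq u loc} stays untouched. The plan is to bound $J_n(t)$ from above to get \eqref{sursol}, and from below to get \eqref{subsol}. Two facts are used throughout: $u_n'=I_n\geq 0$ and $u_k$ is non-decreasing (Proposition \ref{prop u}), and $f$ is increasing on $[0,+\infty)$ with values in $[0,1)$.

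For the supersolution inequality \eqref{sursol}, I use monotonicity. Since every $u_k$ is non-decreasing and $\alpha_{k,n}\geq 0$, the argument $\sum_k\alpha_{k,n}u_k(\tau)$ is non-decreasing in $\tau$. Hence, for $\tau\le t$,
$$f\Big(\sum_k\alpha_{k,n}u_k(\tau)\Big)\le f\Big(\sum_k\alpha_{k,n}u_k(t)\Big).$$
Because $u_n'\ge0$, this gives $J_n(t)\le \beta_n S_n^0 f\big(\sum_k\alpha_{k,n}u_k(t)\big)\int_0^t u_n' = \beta_n S_n^0 f(\cdot)\,u_n(t)$, using $u_n(0)=0$. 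Substituting into \eqref{eq u loc} and factoring out $S_n^0 f(\cdot)$ yields \eqref{sursol} directly.

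For the subsolution inequality \eqref{subsol}, I keep only the self-interaction term. Since $f$ is increasing and every term $\alpha_{k,n}u_k\ge0$, dropping all indices $k\neq n$ gives $f\big(\sum_k\alpha_{k,n}u_k(\tau)\big)\ge f(\alpha_{n,n}u_n(\tau))$. Therefore
$$J_n(t)\ge \beta_n S_n^0\int_0^t u_n'(\tau)\big(1-e^{-\alpha_{n,n}u_n(\tau)}\big)d\tau .$$
This integral is an exact derivative in $\tau$, and integrating it gives $u_n(t)-\frac{1-e^{-\alpha_{n,n}u_n(t)}}{\alpha_{n,n}}$. The lower bound is then $J_n(t)\ge \beta_nS_n^0u_n(t)-\beta_nS_n^0\frac{1-e^{-\alpha_{n,n}u_n(t)}}{\alpha_{n,n}}$. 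Inserting it into \eqref{eq u loc} gives exactly \eqref{subsol}.

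I do not expect a genuine obstacle. The only step needing care is the sign bookkeeping: $J_n$ enters \eqref{eq u loc} with a minus sign, so an upper bound on $J_n$ yields the lower bound \eqref{sursol} on $u_n'$, and vice versa. It also matters that $u_n'\ge0$, so that inequalities on $f$ can be multiplied by $u_n'$ and integrated. The exact change of variables in the second step is the small idea that makes the lower bound explicit, and it is where $\alpha_{n,n}>0$ (guaranteed by the hypotheses) is used. I also note that the $f\big(\sum_k \alpha_{k,n}u_k(\tau)\big)$ written with $\tau$ in \eqref{eq u loc} outside the integral should read $t$, as in Lemma \ref{lemma un}.
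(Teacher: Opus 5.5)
Your proposal is correct and follows the paper's proof essentially verbatim. For \eqref{sursol} you use monotonicity to get $J_n(t)\le \beta_n S_n^0\, u_n(t)\, f\big(\sum_k\alpha_{k,n}u_k(t)\big)$; for \eqref{subsol} you use $f\big(\sum_k\alpha_{k,n}u_k\big)\ge f(\alpha_{n,n}u_n)$ and then integrate exactly, which the paper writes via the primitive $F(x)=x-(1-e^{-x})$. Your remark that the $\tau$ outside the integral in \eqref{eq u loc} should read $t$ is also right.
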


\begin{proof}
    {\em Step 1. Proof of \eqref{sursol}.}\\
Because $t\mapsto u_k(t)$ is non-decreasing with respect to $t$ for all $k\in\N$, we have
$$
\int_0^t u_n'(\tau) f\left(\sum_k \alpha_{k,n}u_k(\tau) \right)d\tau\leq \left(\int_0^t u_n'(\tau)d\tau\right) f\left(\sum_k \alpha_{k,n}u_k(t) \right)= u_n(t)f\left(\sum_k \alpha_{k,n}u_k(t) \right).
$$
Therefore, we get, from \eqref{eq u loc}
    $$
u_n'(t) \geq S_n^0f\left(\sum_k \alpha_{k,n}u_k(t)\right) - \beta_n S_n^0 u_n(t)f\left(\sum_k \alpha_{k,n}u_k(t) \right) - u_n(t) \beta_n I_n^0 + I_n^0,
$$
hence \eqref{sursol} follows.

\medskip
{\em Step 2. Proof of \eqref{subsol}}.\\
Because $u_k(\tau)\geq 0$ for all $\tau\geq 0, k\in \N$, we have
$$
f\left(\sum_k \alpha_{k,n}u_k(\tau) \right)\geq f\left( \alpha_{n,n}u_n(\tau) \right).
$$
Hence, denoting $F(x)=x -(1-e^{-x})$ the primitive of $f$ vanishing at $x=0$, we get
$$
\int_0^t u_n'(\tau) f\left(\sum_k \alpha_{k,n}u_k(\tau) \right)d\tau\geq \int_0^t u_n'(\tau) f\left( \alpha_{n,n}u_n(\tau) \right)d\tau=\frac{1}{\alpha_{n,n}}F(\alpha_{n,n}u_n(t)).
$$
Using this in \eqref{eq u loc} yields
$$
    u_n'(t) \leq S_n^0f\left(\sum_k \alpha_{k,n}u_k(\tau)\right) - \frac{\beta_n S_n^0}{\alpha_{n,n}} F(\alpha_{n,n}u_n(t)) - u_n(t) \beta_n I_n^0 + I_n^0,  
$$
which rewrites as \eqref{subsol}.
\end{proof}

Using all that, we can turn to the proof of Proposition \ref{prop cv}.

\begin{proof}[Proof of Proposition \ref{prop cv}]

Owing to Proposition \ref{prop u}, we have that, for each $n$, the function $u_n(t)$ converges toward a finite limit when $t$ goes to $+\infty$. We denote $U_n$ its limit.

\medskip
{\em Step 1. Lower bound}.

We start with the lower bound, that is, we now prove that
\begin{equation*}
U_n\geq \frac{1}{\beta_n}.
\end{equation*}

We have $u_n'(t) = I_n(t)  \underset{t\to+\infty}{\longrightarrow} 0$, hence, taking the limit $t\to+\infty$ in \eqref{sursol} yields
        $$
   0 \geq S^0_n f\left(\sum_i \alpha_{i,n}U_i\right)\left(1 - \beta_n   U_n\right)   - \beta_n I_n^0 U_n + I_n^0,\quad \forall n \in \N,
    $$
    that is,
    \begin{equation*}
    \left(S^0_n f\left(\sum_i \alpha_{i,n}U_i\right)+I_n^0\right)(\beta_n U_n -1)\geq 0,\quad \forall n \in \N.
    \end{equation*}
    To conclude, it is sufficient to say that, for all $n\in\N$, we have $S^0_n f\left(\sum_i \alpha_{i,n}U_i\right)+I_n^0>0$. This is where the fact that the graph $G$ is connected (as required in the hypotheses in Section \ref{sec hyp}) comes into play. 

    Indeed, if $n\in \N$ is such that $I_n^0>0$, then $S^0_n f\left(\sum_i \alpha_{i,n}U_i\right)+I_n^0 >0$, and this yields that $U_n \geq \frac{1}{\beta_n}$. Now, if $U_n>0$ for some $n\in \N$, and if $k$ is any neighbor node, we have $S^0_k f\left(\sum_i \alpha_{i,k}U_i\right) >0$ because $\alpha_{n,k}\neq 0$. Iterating this argument, and using that the graph $G$ is connected, this yields the result.

\medskip
{\em Step 2. Upper bound}.

Taking the limit $t\to+\infty$ in \eqref{subsol}, we get that, for each $n$ (using that $f\leq 1$)
$$
\beta_n S_n^0 U_n \leq S_n^0+\frac{S_n^0\beta_n}{\alpha_{n,n}}+ I_n^0(1-\beta_n U_n).
$$
Therefore, because $U_n\geq \frac{1}{\beta_n}$ owing to the first step, this yields that
$$
U_n \leq \frac{1}{\beta_n} + \frac{1}{\alpha_{n,n}},
$$
hence the result.
\end{proof}

\subsection{Speed of propagation}

This section is dedicated to the proof of Theorem \ref{th speed}.

We start with a technical result, a {\em comparison principle} for linear discrete equations.

\begin{prop}[Comparison principle]\label{comp}

Let $(a_n(t))_{n\in\N},(b_n(t))_{n\in\N}$ be two families of $C^1$ functions, such that $(a_n)$(resp. $(b_n)$) is bounded from above (resp. below) uniformly with respect to $n\in\N$ and such that
$$
a_n'(t)\leq S_n^0\sum_k \alpha_{k,n}a_k(t),\quad t>0, n\in \N,
$$
and 
$$
b_n'(t)\geq S_n^0\sum_k \alpha_{k,n}b_k(t),\quad t>0, n\in \N.
$$
Assume in addition that $a_n(0)\leq b_n(0)$ for all $n\in\N$.

Then
$$
a_n(t)\leq b_n(t),\quad \forall t>0, n\in \N.
$$
\end{prop}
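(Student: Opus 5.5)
Set $w_n(t) := a_n(t)-b_n(t)$. By linearity, $w_n'(t)\leq S_n^0\sum_k\alpha_{k,n}w_k(t)$, $w_n(0)\leq 0$, and $w$ is bounded from above uniformly in $n$ and locally uniformly in $t$ (it is only bounded from above). The goal is $w_n\leq 0$. The difficulty is that the system is infinite-dimensional, so a maximum-type argument cannot be applied node by node. I will work with the positive part $w_n^+=\max(w_n,0)$. Since the coefficients $S_n^0\alpha_{k,n}$ are non-negative, a Kato-type inequality gives, in the sense of upper Dini derivatives (or after integrating in time),
$$
w_n^+(t)\leq \int_0^t S_n^0\sum_k\alpha_{k,n}w_k^+(\tau)\,d\tau .
$$
To justify this, note that on $\{w_n>0\}$ one has $w_n'\leq S_n^0\sum_k\alpha_{k,n}w_k\leq S_n^0\sum_k \alpha_{k,n}w_k^+$. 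Integrating from the last zero of $w_n$ before $t$ (or from $0$, where $w_n^+=0$) gives the inequality.

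The natural next step, taking $M(t)=\sup_n w_n^+(t)$, runs into a problem. Each node has at most $V$ neighbours and $\alpha_{k,n}\leq\ol\alpha$, $S_n^0\leq\ol\sigma$, so $M(t)\leq \ol\sigma\,\ol\alpha V\int_0^t M(\tau)d\tau$, and Gronwall would give $M\equiv 0$. But this requires $M$ to be finite and measurable, i.e.\ $\sup_n w_n^+(t)<\infty$ locally uniformly in $t$. The hypotheses only give this at each fixed $t$, and I expect this to be the main point to check.

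To get around it, I would either use the standing assumption as meaning a bound uniform on $[0,T]\times\N$ (which holds in the applications, where the $a_n,b_n$ are built from $u_n$ and explicit exponentials), or avoid the supremum altogether by an exponentially weighted quantity. For the weighted version, set $W(t)=\sup_n e^{-\mu d(n)}w_n^+(t)$ with $\mu>0$. Since $|d(k)-d(n)|\leq1$ for $k\sim n$, the same integral inequality gives
$$
e^{-\mu d(n)}w_n^+(t)\leq \ol\sigma\,\ol\alpha V e^{\mu}\int_0^t W(\tau)d\tau .
$$
Hence $W\leq C\int_0^t W$ with $C=\ol\sigma\,\ol\alpha Ve^\mu$. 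The weight also makes finiteness of $W$ easier, since only a pointwise-in-$n$ upper bound with subexponential growth is needed, and uniform boundedness certainly suffices. Measurability is not an issue: one can replace $W$ by the finite, non-decreasing function $\ol W(t)=\sup_{s\leq t}W(s)$, which satisfies $\ol W(t)\leq Ct\,\ol W(t)$. For $t<1/C$ this forces $\ol W(t)=0$.

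Iterating on the intervals $[j/(2C),(j+1)/(2C)]$, restarting from the fact that $w_n\leq0$ at the left endpoint, yields $w_n^+\equiv0$ for all $t>0$ and all $n$, which is $a_n\leq b_n$.
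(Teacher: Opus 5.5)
Your proof is correct, and it follows a genuinely different route from the paper's.

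\textbf{The paper's route.} The paper uses a first-touching-time (maximum principle) argument. It replaces $c_n=b_n-a_n$ by $c_n^\e=c_n+\e e^{\lambda t}(d(n)+1)$ with $\lambda>2\sup_n S_n^0\sum_k\alpha_{k,n}$, which is still a supersolution. The uniform lower bound $c_n\geq -K$, together with the growth of $d(n)+1$, forces the first time $T$ at which some $c^\e_n$ becomes negative to be realized at a finite node $n_0$. There $(c^\e_{n_0})'(T)\le 0$. The inequality, with $\alpha_{k,n}\geq 0$, then gives $c^\e_k(T)=0$ at every neighbour. Connectedness propagates this to all nodes, which contradicts $c^\e_n(T)\to+\infty$ as $d(n)\to+\infty$.

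\textbf{Your route.} You instead derive an integral Kato inequality for $w_n^+$. You then close a Gronwall-type estimate in the weighted norm $\sup_n e^{-\mu d(n)}w_n^+$ on time steps of length $1/(2C)$, with $C=\ol\sigma\,\ol\alpha V e^{\mu}$, and iterate.

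\textbf{Comparison.} Both arguments use the same structural facts: cooperativity ($\alpha_{k,n}\geq 0$), bounded degree and coefficients, and $|d(k)-d(n)|\le 1$ for $k\sim n$. Your argument needs neither connectedness nor a pointwise derivative at a touching point. It is in effect a uniqueness result for the linear cooperative system in weighted $\ell^\infty$. It therefore tolerates growth of $w_n^+$ like $e^{\mu d(n)}$ for any fixed $\mu$, not just subexponential growth, since $\mu$ is free and only enters $C$. The paper's penalization $(d(n)+1)$ is tailored to bounded functions, although it could be adapted. The paper's argument is the classical parabolic one and is fully pointwise.

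\textbf{One correction to your discussion.} The weight does not remove the need for a bound that is uniform in time. Finiteness of $\ol W(t)=\sup_{s\le t}\sup_n e^{-\mu d(n)}w_n^+(s)$ requires a bound on $[0,t]\times\N$. A bound uniform in $n$ at each fixed time does not supply this. So you are using exactly the reading of the hypothesis that the paper uses: its proof takes $c_n(t)\geq -K$ for all $t>0$ and all $n$. That reading is what holds where the principle is applied, in Proposition \ref{prop up speed}, where $0\le I_n\le P_n$ and $v_n\geq 0$. So this is not a gap relative to the paper, but your ``or avoid the supremum altogether'' overstates what the weight achieves.
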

This result is a {\em linear comparison principle}, it is similar to the {\em parabolic comparison principle} (see \cite{protter2012maximum}). We want to emphasize that the hypotheses on the parameters $\alpha_{k,n}$ are crucial here - the result might be false without. 

\begin{proof}[Proof of Proposition \ref{comp}.]
    Define $c_n(t) =  b_n(t)-a_n(t)$. These functions are bounded from below uniformly with respect to $n\in\N$, they satisfy $c_n(0)\geq 0$ and
    $$
    c_n'(t) \geq S_n^0\sum_k \alpha_{k,n}c_k(t),\quad t>0,\ n\in \N.
    $$
    Let us take $\lambda>0$ large enough so that
    \begin{equation}\label{hyp l 2}
    \lambda>2\sup_{n\in\N}S_n^0\sum_k \alpha_{k,n}.
    \end{equation}
    Owing to the hypotheses from Section \ref{sec hyp}, this is possible.

    Take $\e>0$ and define
    $$
    c_n^\e(t)=c_n(t)+\e e^{\lambda t}(d(n)+1),\quad t>0,\ n\in \N.
    $$
    We have, for all $n\in\N$ and $t>0$,
    $$
    (c^\e_n)'(t)\geq S_n^0 \sum_k \alpha_{k,n} c_k^\e(t)  +\e \lambda e^{\lambda t}(d(n)+1) - \e e^{\lambda t}S_n^0\sum_k \alpha_{k,n}(d(k)+1).
    $$
    Owing to the hypotheses, we have that, for all $k$ such that $\alpha_{k,n}\neq 0$, $d(k)\leq d(n)+1$. Hence, because \eqref{hyp l 2} yields $\lambda (d(n)+1)\geq S_n^0\sum_k \alpha_{k,n}(d(n)+2)$, we get
    \begin{equation}\label{eq c e}
    (c_n^\e)'(t)\geq S_n^0 \sum_k \alpha_{k,n}c_k^\e(t),\quad \forall t>0,\ n\in \N.
    \end{equation}
    Let us define $T = \sup\{t>0 \ : \  c_n^\e(\tau)\geq 0\quad \forall \tau \in [0,t],\ \forall n\in \N \}$. We have $T>0$ because $c_n^\e(0)\geq \e$. Assume that $T<+\infty$: we will see that this leads to a contradiction.\\

    For all $\eta>0$, we can find $n_\eta\in\N$ such that $c_{n_\eta}^\e(T+\eta)<0$. Remembering that $c_n$ is bounded from below independently of $t,n$, we can find $K>0$ such that $c_n(t)\geq - K$ for all $t>0,n\in \N$. Hence,
    $$
    0>c_{n_\eta}^\e(T+\eta) \geq -K +\e e^{\lambda(T+\eta)}(d(n_\eta)+1).
    $$
    Therefore, when $\eta$ goes to zero, the sequence $(n_\eta)_{\eta>0}$ stays bounded. Up to extraction, it converges to a limit $n_0\in \N$, and we have
    $$
    c_{n_0}^\e(T)=0 \ \text{and}\ c_{n_0}^\e(t)\geq 0,\quad  \forall t\leq T.
    $$
    Then, we have $(c_{n_0}^\e)'(T) \leq 0$, hence \eqref{eq c e} gives
    $$
    \sum_k\alpha_{k,n}c_k^\e(T)\leq 0.
    $$
    Hence, $c_k^\e(T)=0$ for all $k$ such that $\alpha_{k,n}\neq0$, that is, for all $k$ neighbors of $n_0$. Iterating this argument, we get that $c_n^\e(T)=0$ for all $n\in\N$. This is not possible: by definition of $c_n^\e(T)$, it goes to $+\infty$ when $d(n)$ goes to $+\infty$.\\

    Hence, we have reached a contradiction: we have $T=+\infty$. Therefore, $c_n^\e(t)\geq 0$ for all $t>0,\ n\in \N$, and then
    $$
    c_n(t)\geq -\e e^{\lambda t}(d(n)+1),\quad \forall t>0,\ n\in \N.
    $$
    Because this is true for all $\e>0$, the result follows.
\end{proof}
With the above comparison principle at hand, we can give upper bounds on the spreading speed. 
\begin{prop}[Upper bound on the speed]\label{prop up speed} Let $(S_n,I_n)$ be the solution of \eqref{syst loc} with initial datum $(S_n^0 , I_n^0)$ that satisfy the hypotheses of Section \ref{sec hyp}.

Then, we have
    $$
    \sup_{d(n)\geq ct}I_n(t) \underset{t\to+\infty}{\longrightarrow} 0 \ \text{and}\ 
\sup_{d(n)\geq c t}\vert S_n(t) - S_n^0\vert \underset{t\to +\infty}{\longrightarrow}0,\quad
\forall c>c^\star.
    $$
\end{prop}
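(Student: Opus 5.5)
The plan is to linearize: the system for $I_n$ is dominated by a linear system to which the comparison principle (Proposition \ref{comp}) applies, and then to compare with an exponential supersolution travelling at speed slightly above $c^\star$.

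\textbf{Step 1: linear upper system.} From \eqref{syst loc}, $S_n\le S_n^0$ and the stifling term is nonpositive, so
$$
I_n'(t)\le S_n^0\sum_k\alpha_{k,n}I_k(t),\qquad t>0,\ n\in\N .
$$
The family $(I_n)$ is bounded above uniformly in $n$, since $I_n\le P_n\le\ol\sigma+\ol\iota$. Thus $a_n=I_n$ is admissible in Proposition \ref{comp}.

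\textbf{Step 2: exponential supersolution.} Fix $c>c^\star$ and pick $\lambda>0$ with
$$
\sup_n \frac{S_n^0\sum_i\alpha_{i,n}e^{\lambda(d(n)-d(i))}}{\lambda}<c ,
$$
which is possible by definition of $c^\star$. Set $b_n(t)=A e^{\lambda(ct-d(n))}$. Then
$$
b_n'=\lambda c\, b_n\ \ge\ S_n^0\sum_k\alpha_{k,n}e^{\lambda(d(n)-d(k))}\,b_n=S_n^0\sum_k\alpha_{k,n}b_k ,
$$
so $b$ is a supersolution. It is bounded below by $0$. Because $I^0$ is compactly supported and bounded by $\ol\iota$, choosing $A$ large gives $b_n(0)\ge I_n^0$ for all $n$. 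Proposition \ref{comp} then yields
$$
I_n(t)\le A e^{\lambda(ct-d(n))} .
$$
For any $c'>c$ and $d(n)\ge c't$ this gives $I_n(t)\le Ae^{-\lambda(c'-c)t}\to0$. Since $c>c^\star$ was arbitrary, every $c'>c^\star$ is covered by taking $c\in(c^\star,c')$, which proves the first claim.

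\textbf{Step 3: $S_n$.} By \eqref{Su}, $|S_n(t)-S_n^0|\le S_n^0\sum_i\alpha_{i,n}u_i(t)$, and $u_i(t)=\int_0^tI_i$. Integrating the bound of Step 2 gives
$$
u_i(t)\le \frac{A}{\lambda c}\,e^{\lambda(ct-d(i))} .
$$
For neighbours $i\sim n$ we have $d(i)\ge d(n)-1$, so
$$
|S_n(t)-S_n^0|\le \ol\sigma\, V\ol\alpha\,\frac{A e^{\lambda}}{\lambda c}\,e^{\lambda(ct-d(n))} .
$$
This tends to $0$ uniformly on $d(n)\ge c't$, for the same reason as in Step 2.

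\textbf{Main obstacle.} The delicate point is the admissibility conditions of Proposition \ref{comp}. The supersolution $b_n$ is unbounded above in $t$, but the proposition only requires $b$ bounded below, so this is fine. The remaining check is that the infimum defining $c^\star$ is finite and attained approximately by some $\lambda$. This holds because $d(n)-d(i)\le1$ for neighbours, so the quotient is at most $\ol\sigma\ol\alpha Ve^\lambda/\lambda<\infty$.
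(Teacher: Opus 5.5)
Your proof is correct and follows the paper's route: bound $I_n$ by the linear system via $S_n\le S_n^0$, compare through Proposition \ref{comp} with the exponential supersolution $Ae^{\lambda(ct-d(n))}$ at a speed strictly between $c^\star$ and the target speed, then integrate to control $u_n$ and hence $|S_n-S_n^0|$ via \eqref{Su}. The differences are cosmetic: $c$ and $c'$ are swapped, you use $1-e^{-x}\le x$ directly, and you state the uniform upper bound $I_n\le P_n$ that the paper leaves implicit when applying the comparison principle.
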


\begin{proof}[Proof of Proposition \ref{prop up speed}]
Because $S_n(t)$ is non-increasing with respect to $t$, we have $S_n(t)\leq S_n^0$. Using that in \eqref{syst loc}, we find
$$
I_n'(t) \leq  S_n^0 \sum_k \alpha_{k,n}I_k(t),\quad t>0,\ n\in \N.
$$
Let $c>c^\star$ be chosen, and take $c' \in ]c^\star,c[$. By the definition of $c^\star$, we can find $\lambda>0$ such that 
$$
 S_n^0\sum_k \alpha_{k,n} e^{\lambda(d(n)- d(k))}\leq \lambda c',\quad \forall n\in \N.
$$
For such a choice of $\lambda,c'>0$, and for $A>0$ to be chosen large enough after, we define, for all $t>0$ and $n\in \N$,
$$
v_n(t) = A\exp(-\lambda(d(n)-c't)).
$$
Observe that
$$
v_n'(t) - S_n^0\sum_k\alpha_{k,n}v_k(t) = \left(\lambda c'  - S_n^0\sum_k \alpha_{k,n} e^{\lambda(d(n)- d(k))}\right)v_n \geq 0,
$$
Hence, up to choosing $A$ large enough so that $v_n(0)\geq I_n(0)$ (which is possible owing to the hypotheses on $I_n^0$ in Section \ref{sec hyp}), the comparison principle, Proposition \ref{comp}, applied to $I_n,v_n$ yields that
\begin{equation}\label{est I speed}
I_n(t)\leq v_n(t),\quad \forall t>0,\ n\in \N.
\end{equation}
Therefore, 
$$
\sup_{d(n)\geq c t}I_n(t) \leq Ae^{-\lambda(c-c')t} \underset{t\to+\infty}{\longrightarrow}0.
$$
Observe that \eqref{est I speed} also yields that
$$
u_n(t) = \int_0^t I_n(\tau)d\tau \leq \frac{A}{\lambda c'} e^{-\lambda(d(n) - c't)}.
$$
Now, remembering \eqref{Su}, we have $S_n(t) = S_n^0 e^{-\sum_k \alpha_{k,n}u_k(t) }$. Using that $\alpha_{k,n}\neq 0$ implies that $\vert d(k)-d(n)\vert \leq 1$, this yields
$$
\begin{array}{rl}
S_n(t) &\geq S_n^0\exp\left( -\sum_k \frac{\alpha_{k,n}A}{\lambda c'}e^{-\lambda(d(k) - c't)}  \right)\\
&\geq S_n^0\exp\left( - e^{\lambda}\left(\sum_k \frac{\alpha_{k,n}A}{\lambda c'}\right)  e^{-\lambda(d(n) - c't)}  \right).
\end{array}
$$
On the other hand, since $S_n(t)\leq S_n^0$ for all $t>0,n\in\N$, this yields that
$$
\sup_{d(n)\geq c t}\vert S_n(t) - S_n^0\vert \underset{t\to +\infty}{\longrightarrow}0.
$$
\end{proof}
Proposition \ref{prop up speed} proves the first part of Theorem \ref{th speed}, that is, the opinion does not spread faster than $c^\star$.\\

We now turn to the proof of the lower bound on the speed. We start with the following intermediate result.

\begin{lemma}\label{lem speed}
Let $(S_n,I_n)$ be the solution of \eqref{syst loc} with initial datum $(S_n^0 , I_n^0)$ that satisfy the hypotheses of Section \ref{sec hyp}, and let $(u_n)$ be defined by \eqref{def u}.\\

    Let $\eta \in ]0,1[$, and define
    \begin{equation}\label{c eta}
    c_\eta = \inf_{\substack{ n,m\in\N^2 \\ d(n,m)=1}}\frac{S_m^0 (1-\eta)\beta_m f(\alpha_{n,m}\frac{\eta}{\beta_n})}{\eta}>0.
    \end{equation}
    For $c<c_\eta$, we have
    $$
    \liminf_{t\to+\infty}\left(\inf_{d(n)\leq c t} \beta_n u_n(t)\right) \geq \eta.
    $$
\end{lemma}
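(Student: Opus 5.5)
Set $w_n(t):=\beta_n u_n(t)$. I propose to show that $w$ crosses the level $\eta$ node by node along shortest paths from the node $0$, with at most a fixed delay $1/c_\eta$ per edge. This gives linear-in-time invasion at speed $c_\eta$.

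\emph{Step 1 (a one-step differential inequality).} Take $n,m$ with $d(n,m)=1$. Here $n$ is the node already reached and $m$ the node to be reached, as in \eqref{c eta}. Suppose that on some interval $[t_1,t_2]$ we have $w_n(t)\geq\eta$ and $w_m(t)<\eta$. The term $-\beta_m I_m^0u_m+I_m^0=I_m^0(1-w_m)$ in \eqref{sursol} is non-negative because $w_m<1$. Since $f$ is increasing and all $u_k\geq0$, we have $f\bigl(\sum_i\alpha_{i,m}u_i\bigr)\geq f(\alpha_{n,m}u_n)\geq f(\alpha_{n,m}\eta/\beta_n)$. Multiplying \eqref{sursol} by $\beta_m$ then gives, on $[t_1,t_2]$,
$$
w_m'(t)\geq \beta_m S_m^0 f\Bigl(\alpha_{n,m}\frac{\eta}{\beta_n}\Bigr)(1-\eta)\geq c_\eta\,\eta .
$$
Every $u_k$ is non-decreasing by Proposition \ref{prop u}. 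Hence once $w_n(t_1)\geq\eta$ this condition persists for all later times, and integrating shows that $w_m$ reaches $\eta$ by time $t_1+1/c_\eta$. After that it stays above $\eta$, again by monotonicity.

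\emph{Step 2 (initialization and induction).} By Proposition \ref{prop cv}, $u_0(t)\to U_0\geq 1/\beta_0$. Since $\eta<1$, there is a finite $T_0$ with $w_0(T_0)\geq\eta$. The graph is connected, so each node $n$ is joined to $0$ by a path $0=n_0\sim n_1\sim\dots\sim n_{d(n)}=n$ of consecutive nodes at distance one. Applying Step 1 along this path and inducting on $d(n)$ yields
$$
w_n(t)\geq\eta\quad\text{for all } t\geq T_0+\frac{d(n)}{c_\eta}.
$$
It matters that the constant $c_\eta$ is uniform over all edges, which is exactly why it is defined as an infimum. It is positive by the bounds in Section \ref{sec hyp}.

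\emph{Step 3 (conclusion).} Fix $c<c_\eta$. For $t\geq T_0c_\eta/(c_\eta-c)$ we have $ct\leq c_\eta(t-T_0)$. Therefore every $n$ with $d(n)\leq ct$ satisfies $t\geq T_0+d(n)/c_\eta$, hence $w_n(t)\geq\eta$. This gives $\inf_{d(n)\leq ct}\beta_n u_n(t)\geq\eta$ for all large $t$, which is stronger than the claimed $\liminf$.

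The only delicate point is Step 1. One must check that the lower bound in \eqref{sursol} does not degrade while $w_m<\eta$: the factor $1-w_m$ stays above $1-\eta$, and the $I_m^0$ term stays non-negative. The monotonicity of all the $u_k$ is what allows one already-reached neighbour to feed $m$ permanently. Apart from this, the argument is a straightforward induction.
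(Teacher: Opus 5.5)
Your proof is correct and follows essentially the same route as the paper. The paper also uses \eqref{sursol} to show that the hitting times $\tau_n=\inf\{t\geq 0:\beta_n u_n(t)\geq\eta\}$ satisfy $\tau_m\leq\tau_n+1/c_\eta$ across each edge, hence $\tau_n\leq\tau_0+d(n)/c_\eta$, and concludes from there. Your Step 3 replaces the paper's sequence-and-extraction argument with a direct estimate, and you explicitly check the sign of the $I_m^0$ term, which the paper drops silently; both are slightly cleaner but not a different idea.
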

\begin{proof}[Proof of Lemma \ref{lem speed}]
Let $\eta \in ]0,1[$ be fixed. For each $n$, we have that $t\mapsto u_n(t)$ is non decreasing and converges to a limit $U_n$ such that $U_n\geq \frac{1}{\beta_n}$, owing to Theorem \ref{th cv}. We define, for each $n$,
$$
\tau_n = \inf\{t\geq 0 \ : \ \beta_n u_n(t)\geq \eta\}.
$$
The first step of the proof consists in giving an estimate on $\tau_n$.\\

\medskip
{\em Step 1. Estimate on $\tau_n$.}

Let $n\in \N$ be fixed and take $m\in \N$, $m\neq n$, be such that $\alpha_{n,m}>0$. Then we have $d(n,m)=1$. Assume that $\tau_m > \tau_n$.

Owing to \eqref{sursol}, we have, for $t\in [\tau_n,\tau_m]$,
$$
u'_m(t) \geq S_m^0 f(\alpha_{n,m}u_n)(1-\beta_m u_m(t)),\quad \forall t\in [\tau_n,\tau_m],
$$
hence, because $f$ is non-decreasing,
$$
u'_m(t)\geq S_m^0f(\alpha_{n,m}\frac{\eta}{\beta_n})(1-\eta),\quad \forall t\in [\tau_n,\tau_m],
$$
and then, integrating with respect to $t$ for $t\in [\tau_n,\tau_m]$, we get
$$
\frac{\eta}{\beta_m}\geq (\tau_m-\tau_n)S_m^0f(\alpha_{n,m}\frac{\eta}{\beta_n})(1-\eta).
$$
Therefore, we find that
$$
\tau_m\leq \tau_n + \frac{\eta}{S_m^0(1-\eta)\beta_m f(\alpha_{n,m}\frac{\eta}{\beta_n})},
$$
hence, 
\begin{equation}\label{tm tn}
\tau_m \leq \tau_n + \frac{1}{c_\eta}.
\end{equation}
We supposed that $\tau_m>\tau_n$ to prove this, but this relation is also trivially true when $\tau_m\leq \tau_n$. Therefore, \eqref{tm tn} is true for all $n,m$ such that $d(n,m)=1$.\\

This implies that, for all $n\in\N$,
$$
\tau_n \leq \tau_0 + \frac{d(n)}{c_\eta}.
$$
\medskip
{\em Step 2. Lower bound on the propagation.}

Let $c\in [0,c_\eta[$ be chosen. We want to show that
$$
\liminf_{t\to+\infty}\left(\inf_{d(n) \leq c t} \beta_n u_n(t)\right)\geq \eta.
$$
To do so, let us choose a sequence $(t_k)_{k\in\N}>0$ such that $t_k\underset{k\to+\infty}{\longrightarrow} +\infty$, and let us choose $n_k$ such that $d(n_k)\leq c t_k$ and such that
$$
\beta_{n_k}u_{n_k}(t_{k}) = \inf_{d(n)\leq c t_k} \beta_n u_n(t_k).
$$
We want to show that $\liminf_{k\to+\infty}\beta_{n_k}u_{n_k}(t_{k}) \geq \eta$. 
%Let us argue by contradiction, assume that there is $\e>0$ such that $\e<\frac{\eta}{\beta}$ and we have (up to extraction) $\lim_{k\to+\infty}u_{n_k}(t_k) = \e$.

If we had $d(n_k)$ bounded, the result would follow directly from Theorem \ref{th cv}. Assume then that $d(n_k)\to +\infty$. Then, for $k$ large enough, we have
$$
\tau_{n_k} \leq t_0 + \frac{d(n_k)}{c_\eta} \leq \frac{d(n_k)}{c}\leq t_k.
$$
For such $k$, we have, using that the functions $u_n(t)$ are non-decreasing with respect to the $t$ variable,
$$
\beta_{n_k}u_{n_k}(t_k)\geq  \beta_{n_k}u_{n_k}(\tau_{n_k}) \geq \eta.
$$
This yields the result.
\end{proof}
We can now use this to get the following result, that gives us a lower bound on the speed of propagation.
\begin{prop}[Lower bound on the speed]\label{prop low speed}
Let $(S_n,I_n)$ be the solution of \eqref{syst loc} with initial datum $(S_n^0 , I_n^0)$ that satisfy the hypotheses of Section \ref{sec hyp}, and let $(u_n)$ be defined by \eqref{def u}. We have
    $$
\liminf_{t\to+\infty}\left(\inf_{d(n)\leq c t} \beta_n u_n(t)\right)\geq 1, \quad \forall c \in [0,c_\star[.
$$
\end{prop}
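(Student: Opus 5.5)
The plan is to combine Lemma \ref{lem speed}, applied with a \emph{small} $\eta$, with the supersolution inequality \eqref{sursol}. The lemma alone is not enough: as $\eta\to1$ we get $c_\eta\to0$. But as $\eta\to 0$ we have $f(x)\sim x$, so
$$
\frac{S_m^0(1-\eta)\beta_m f(\alpha_{n,m}\eta/\beta_n)}{\eta}\longrightarrow \frac{\beta_m}{\beta_n}\alpha_{n,m}S_m^0 .
$$
This convergence is uniform in $(n,m)$, because $\alpha_{n,m}\eta/\beta_n\le \ol\alpha\eta/\ul\beta$ and $1-f(x)/x\le x/2$. Hence $\liminf_{\eta\to0}c_\eta\ge c_\star$. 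This inequality holds since the infimum defining $c_\star$ runs over the larger set $d(n,m)\le 1$. So, given $c<c_\star$, I fix $c'\in(c,c_\star)$ and then $\eta>0$ small enough that $c'<c_\eta$.

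Step 1 uses Step 1 of the proof of Lemma \ref{lem speed}. There the hitting times $\tau_n=\inf\{t:\beta_nu_n(t)\ge\eta\}$ satisfy $\tau_n\le\tau_0+d(n)/c_\eta$. Consequently, for $d(n)\le ct$,
$$
t-\tau_n\ge \Big(1-\frac{c}{c_\eta}\Big)t-\tau_0\longrightarrow+\infty
$$
uniformly in such $n$.

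Step 2 boosts $\eta$ up to $1$ after time $\tau_n$, with a rate that is uniform in $n$. Consider a node with $I_n^0=0$; all but finitely many nodes are of this kind. For $t\ge\tau_n$, monotonicity of $f$ and of $u$ together with $u_k\ge0$ gives
$$
f\Big(\sum_i\alpha_{i,n}u_i(t)\Big)\ge f(\alpha_{n,n}u_n(t))\ge f(\ul\alpha\,\eta/\ol\beta)=:\phi>0.
$$
Set $w_n=1-\beta_nu_n$. Where $w_n\ge 0$, inequality \eqref{sursol} gives
$$
w_n'\le-\beta_nS_n^0\phi\, w_n\le-\kappa w_n,\qquad \kappa:=\ul\beta\,\ul\sigma\,\phi .
$$
Since $w_n$ is non-increasing anyway (because $u_n$ is non-decreasing), Gr\"onwall yields
$$
w_n(t)\le(1-\eta)e^{-\kappa(t-\tau_n)}\quad\text{for }t\ge\tau_n ,
$$
in the case $w_n(t)\ge0$. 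If $w_n(t)<0$, the desired inequality is immediate.

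Step 3 concludes. Combining the two steps, $\sup_{d(n)\le ct,\,I_n^0=0}\big(1-\beta_nu_n(t)\big)^+\to0$. The finitely many nodes with $I_n^0>0$ are handled directly: $\beta_nu_n(t)\to\beta_nU_n\ge1$ by Proposition \ref{prop cv}, which gives $\liminf\ge1$ for each of them. This proves the claim.

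The main obstacle I expect is checking that every constant is uniform in $n$. This concerns the convergence $c_\eta\to$ a value $\ge c_\star$, and the lower bound on $\phi$ and $\kappa$. Both rely on the bounds $\ul\alpha,\ol\alpha,\ul\beta,\ol\beta,\ul\sigma$ from Section \ref{sec hyp}.
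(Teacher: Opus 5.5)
Your argument is correct, and it takes a different route from the paper.

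\textbf{The paper's route.} It argues by compactness and contradiction. It chooses $t_k\to+\infty$ and minimizing nodes $n_k$ with $d(n_k)\le ct_k$, translates $v_k(t)=u_{n_k}(t+t_k)$, and extracts a locally $C^1$ limit $v_\infty$ by Arzel\`a--Ascoli. It then uses only the \emph{conclusion} of Lemma~\ref{lem speed} to bound the infection term from below. Finally, it integrates the limiting inequality $v_\infty'\ge\gamma(1-\beta_\infty v_\infty)$ from $-\infty$ to contradict $v_\infty(T)<1/\beta_\infty$. This is a Liouville-type argument in the spirit of reaction--diffusion theory.

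\textbf{Your route.} You reuse the hitting-time estimate $\tau_n\le\tau_0+d(n)/c_\eta$ from inside the proof of the lemma. You then apply Gr\"onwall to $1-\beta_nu_n$ from time $\tau_n$ on, bounding the infection term through the self-loop, $\alpha_{n,n}u_n(t)\ge\ul\alpha\eta/\ol\beta$.

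\textbf{What each buys.} Your approach is more elementary: no extraction, no bounds on $v_k''$, no limit object. It is also quantitative, giving $(1-\beta_nu_n(t))^+\le(1-\eta)e^{-\kappa((1-c/c_\eta)t-\tau_0)}$ uniformly on $\{d(n)\le ct,\ I_n^0=0\}$. The paper's approach needs only the liminf statement of the lemma, so it would still work without explicit hitting-time control.

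Your bookkeeping is also a bit more careful than the paper's at two points:
\begin{itemize}
\item You claim only $\liminf_{\eta\to0}c_\eta\ge c_\star$, which is what is actually true. The infimum defining $c_\eta$ excludes $m=n$, so the paper's assertion $c_\eta\to c_\star$ can fail (harmlessly).
\item Your constant $f(\ul\alpha\eta/\ol\beta)$ avoids the paper's factor $V$, which is an upper rather than a lower bound on the degree.
\end{itemize}
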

\begin{proof}

{\em Step 1. Rewriting the problem with translated functions.}

Let us take $c\in [0,c_\star[$. Because $c_\eta$ (defined by \eqref{c eta}) is such that $c_\eta \underset{\eta \to 0}{\longrightarrow} c_\star $, we take $\eta$ small enough so that $c_\eta>c$.\\

    We take a sequence of times $(t_k)_{k\in\N}$ that go to $+\infty$ as $k$ goes to $+\infty$ and a sequence $(n_k)_{k\in \N}$ such that $d(n_k)\leq c t_k$ and
    $$
    \inf_{d(n)\leq c t_k} \beta_n u_n(t_k) = \beta_{n_k}u_{n_k}(t_k).
    $$
    Let us show that $\liminf_{k\to+\infty} \beta_{n_k }u_{n_k}(t_k)\geq 1$.\\
    
    If we had $d(n_k)$ bounded independently of $k$, then the result would follow as a consequence of Theorem \ref{th cv}. From now on, we assume that $d(n_k)\underset{k\to+\infty}{\longrightarrow}+\infty$.

    We define the sequence of translated functions
    $$
    v_k(t) :=u_{n_k}(t+t_k),\quad \forall t>-t_k.
    $$
Owing to \eqref{sursol}, it satisfies, for $k$ large enough (because $I_{n_k}^0 = 0$ for $k$ large)
    \begin{equation}\label{eq vk}
    v'_k(t)\geq S_{n_k}^0 f\left(\sum_i \alpha_{i,n_k} u_i(t+t_k)\right)(1-\beta_{n_k} v_k(t)), \quad \forall t> -t_k.
    \end{equation}
Because we have $\ul \beta \leq \beta_{n_k}\leq \ol \beta$, we can assume that, up to extraction, we have $\beta_{n_k} \underset{k\to+\infty}{\longrightarrow} \beta_\infty$, for some $\beta_\infty \in [\ul \beta,\ol \beta]$.
    
Our goal in the rest of the proof is to show that $v_k$ converges to a limit function $v_\infty$ when $k$ goes to $+\infty$, and that $\beta_\infty v_\infty$ is always larger than $1$, which will yield the result.

\medskip
{\em Step 2. convergence of the functions $v_k$.}

    Observe that $v_k,v'_k,v_k^{''}$ are bounded independently of $k$. Indeed, owing to Proposition \ref{prop cv}, we have $0\leq v_k(t)\leq \frac{1}{\beta_{n_k}} + \frac{1}{\alpha_{n_k,n_k}}\leq \frac{1}{\ul \beta} + \frac{1}{\ul \alpha}$, that is, the sequence $(v_k)_{k\in\N}$ is uniformly bounded (that is, independently of $k$).

    In addition, we have $0\leq v_k'(t)=I_{n_k}(t+t_k) \leq P_{n_k} \leq  \ol \sigma + \ol \iota$, that is, the sequence $(v_k')_{k\in\N}$ is also uniformly bounded (hence the sequence $(v_k)_{k\in\N}$ is uniformly Lipschitz).

    We also have that $\vert v_k''(t)\vert =\vert I_{n_k}'(t+t_k)\vert$ is uniformly bounded.\\

Therefore, it follows from the Arzel\`a-Ascoli that there is $v_\infty\in C^1(\R)$ such that, up to extraction,
    $$
    v_k\underset{k\to+\infty}{\longrightarrow} v_\infty
    $$
    and this convergence is locally $C^1$ in $t\in \R$. \\

 Observe that, because the functions $u_{n_k}$ are non-decreasing with respect to $t$, the function $v_\infty$ is also non-decreasing with respect to $t$.\\
 
 By contradiction, we assume from now on that there is $T\in \R$ such that
    \begin{equation}\label{cont v infty}
        v_\infty(T)<\frac{1}{\beta_\infty}.
    \end{equation}
    Then, for all $t\leq T$, we have $v_\infty(t)<\frac{1}{\beta_\infty}$.\\

    On the other hand, it follows from Lemma \ref{lem speed} that, for all $t\in \R$,
        $$
    \liminf_{k\to+\infty}\sum_i \alpha_{i,n_k}u_i(t+t_k)= 
    \liminf_{k\to+\infty}\sum_i \frac{\alpha_{i,n_k}}{\beta_i} \beta_i u_i(t+t_k)\geq \frac{\ul \alpha}{\ol \beta}  V \eta. 
    $$
%    $$
%    \liminf_{k\to+\infty}\sum_i \alpha_{i,n_k}u_i(t+t_k)\geq \ul \alpha V\frac{\eta}{\beta}.
%    $$
    Indeed, when $\alpha_{i,n_k}\neq 0$, we have $d(i)\leq d(n_k)+1\leq ct_k + 1 \leq c_\eta t_k$ for $k$ large. The same argument also yields 
    \begin{equation*}
    v_\infty(t) \geq \frac{\eta}{\beta_\infty},\quad \forall t \in \R.
    \end{equation*}
    Therefore, taking the limit $k\to+\infty$ in \eqref{eq vk} (up to extraction), we get that $v_\infty$ satisfies
    \begin{equation}\label{eq v infty}
    v'_\infty(t) \geq \ul \sigma f\left(\frac{\ul \alpha}{\ol \beta} V \eta \right)(1-\beta_\infty v_\infty(t)),\quad \forall t\leq T.
    \end{equation}

    \medskip
    {\em Step 3. Conclusion.}

    Let us denote $\gamma :=\ul \sigma f\left(\frac{\ul \alpha}{\ol \beta} V \eta \right)>0$. Integrating \eqref{eq v infty}, we have, for all $z\leq T$,
    $$
    v_\infty(T)e^{\gamma \beta_\infty T} - v_\infty(z)e^{\gamma \beta_\infty z} \geq \frac{1}{\beta_\infty}\left(e^{\gamma \beta_\infty T} - e^{\gamma \beta_\infty z} \right).
    $$
Taking the limit $z\to-\infty$ yields that
$$
v_\infty(T)\geq \frac{1}{\beta_\infty},
$$
which is in contradiction with \eqref{cont v infty}. \\

Therefore, we found that $v_\infty(T)\geq \frac{1}{\beta_\infty}$ for all $T\in \R$, in particular for $T=0$. Hence,
$$
1\leq \beta_\infty v_\infty(0) = \liminf_{k\to+\infty}\beta_{n_k}v_k(0)=\liminf_{k\to+\infty}\beta_{n_k} u_{n_k}(t_k),
$$
which yields the result.
\end{proof}

Now, the conclusion of Theorem \ref{th speed} for $c<c_\star$ follows directly from Proposition \ref{prop low speed}. Indeed, we have
$$
S_n(t) - S_n^0e^{-\sum_i \frac{\alpha_{i,n}}{\beta_i}} = S_n^0(e^{-\sum_i \alpha_{i,n}u_i(t)} - e^{-\sum_i \frac{\alpha_{i,n}}{\beta_i}})
$$
and then, for any $c<c_\star$, Proposition \ref{prop low speed} yields that
$$
\limsup_{t\to+\infty}\sup_{d(n)\leq ct} (S_n(t) - S_n^0 e^{-\sum_i \frac{\alpha_{i,n}}{\beta_i}}) \leq 0
$$

Combining this with Proposition \ref{prop up speed} gives Theorem \ref{th speed}.

\section{The effects of non-local stifling}\label{sec non loc}

\subsection{Propagation}

We now consider the situation where the stifling is non-local, that is, when the parameters $(\beta_{i,n})$ do not satisfy \eqref{beta local} anymore. In this case, the spreaders located at node $n$ take into account the presence of the opinion not only in their own community, but also in the neighboring communities to estimate whether or not they should spread the opinion.\\

We first prove Theorem \ref{th spread non loc}, which states that the opinion still spreads in this situation. Again, it will be obtained by working with the equation \eqref{eq u}. Then, we present some simulations that suggest that the stifling can induce patterns when it is ``sufficiently'' non-local.

\begin{prop}\label{prop non loc}
Let $(S_n,I_n)$ be the solution of \eqref{syst} with initial datum $(S_n^0,I_n^0)$ satisfying the hypotheses of Section \ref{sec hyp}. Let $(u_n)$ be defined by \eqref{def u}.\\

Let $U_n = \lim_{t\to+\infty}u_n(t)$. Then,
$$
\limsup_{d(n)\to +\infty} U_n>0.
$$
\end{prop}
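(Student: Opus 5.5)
Throughout, $U_n$ is the (finite) limit of $u_n(t)$ given by Proposition \ref{prop u}. I would argue by contradiction and assume $U_n\to 0$ as $d(n)\to+\infty$. The idea is that far from the initial focus the spreaders are then "alone": both the infection pressure $\sum_k\alpha_{k,n}u_k$ and the stifling pressure $\sum_i\beta_{i,n}(P_i-S_i)$ are small. In such a regime the linearized dynamics of $I_n$ is pure growth, $I_n'\approx S_n^0\sum_k\alpha_{k,n}I_k$, which contradicts the smallness of the $U_n$.

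The steps, in order, are as follows.

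\emph{Step 1 (uniform smallness of the coefficients far away).} Under the contradiction hypothesis, pick $\delta>0$ small and $N$ such that $U_n\le\delta$ whenever $d(n)\ge N$. Since $u_n\le U_n$ and $f(x)\le x$, for $d(n)\ge N+R_\beta+1$ we get
$$
\sum_i\beta_{i,n}S_i^0f\Big(\sum_k\alpha_{k,i}u_k(\tau)\Big)\le C\delta
$$
for all $\tau$, with $C$ depending only on $V,\ol\alpha,\ol\beta,\ol\sigma,R_\beta$. Also $I_i^0=0$ for such $n$ and all $i$ within distance $R_\beta$ of $n$, provided $N$ is large. Similarly, $S_n^0f(\sum_k\alpha_{k,n}u_k)\ge \ul\sigma(1-C\delta)\sum_k\alpha_{k,n}u_k$.

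\emph{Step 2 (a linear differential inequality).} Plugging these bounds into \eqref{eq u} gives, for $d(n)\ge N'$,
$$
u_n'(t)\ge (1-C\delta)S_n^0\sum_k\alpha_{k,n}u_k(t)-C\delta\,u_n(t).
$$
Since $\sum_k\alpha_{k,n}u_k\ge\alpha_{n,n}u_n\ge\ul\alpha\,u_n$, we may choose $\delta$ small enough that the coefficient of $u_n$ stays positive: $u_n'\ge \kappa u_n$ with $\kappa=(1-C\delta)\ul\sigma\ul\alpha-C\delta>0$.

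\emph{Step 3 (contradiction).} By the connectedness argument of Step 1 in the proof of Proposition \ref{prop cv}, the opinion reaches every node: every $U_n>0$, and indeed $u_n(t)>0$ for $t>0$. Take a node $n$ with $d(n)\ge N'$. Then $u_n(t_0)>0$ for some $t_0$, and $u_n'\ge\kappa u_n$ forces $u_n(t)\ge u_n(t_0)e^{\kappa(t-t_0)}\to+\infty$. This contradicts $U_n\le\delta<\infty$. Hence $\limsup_{d(n)\to\infty}U_n>0$.

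The main obstacle is Step 2: one must make sure that the nonlocal memory term is indeed controlled by $C\delta\,u_n(t)$. It is, because
$$
\int_0^t u_n'(\tau)(\cdots)\,d\tau\le C\delta\int_0^t u_n'=C\delta\,u_n(t).
$$
One must also check that $\sum_k\alpha_{k,n}u_k\ge\ul\alpha\,u_n$ rather than needing neighbors. Since this inequality involves only $u_n$ itself, no comparison principle over the infinite graph is needed. The uniformity of the constants comes from the bounded degree $V$ and the finite range $R_\beta$, so the argument closes at a single node.
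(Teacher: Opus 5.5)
Your proof is correct and follows essentially the same route as the paper. Both argue by contradiction, bound the memory term in \eqref{eq u} by a small multiple of $u_n(t)$ using the finite stifling range $R_\beta$ and the bounded degree, linearize $f$ near $0$, and keep only the term $\alpha_{n,n}u_n$; this gives $u_n'\ge\kappa u_n$ at a single far node, which contradicts the boundedness of $u_n$. Your Step 3 makes explicit a point the paper leaves implicit, namely that $u_n(t_0)>0$ for some $t_0$. In the non-local setting, however, you should justify this with a direct positivity argument on \eqref{syst}, namely $(e^{Mt}I_n)'\ge e^{Mt}S_n\sum_k\alpha_{k,n}I_k$ propagated along paths from a node with $I_k^0>0$, rather than by citing the local-stifling argument of Proposition \ref{prop cv}.
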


\begin{proof} The convergence of the functions $u_n$ is already given by Proposition \ref{prop u}.

    We shall argue by contradiction. We assume that
    \begin{equation}\label{cont stif}
    \limsup_{d(n)\to +\infty} U_n=0.
    \end{equation}
    Because $U_n\geq 0$, this means that we have $U_n\underset{n\to+\infty}{\longrightarrow}0$. Owing to Lemma \ref{lemma un}, we have, using that $u_n(t)$ is non-decreasing with respect to the $t$ variable and that $f$ is also non-decreasing, for all $n\in\N, t>0$,
    $$
u_n'(t) \geq S_n^0f\left(\sum_k \alpha_{k,n}u_k(t)\right) - u_n(t)\left(\sum_i \beta_{i,n}S_i^0 f\left(\sum_k \alpha_{k,i}u_k(t)\right) \right) - u_n(t)\left(\sum_i \beta_{i,n}I_i^0\right) + I_n^0.
$$
Using again that $f$ is non-decreasing, that $\sum_k \alpha_{k,n}u_k \geq \ul \alpha u_n$ and that $\sum_k \alpha_{k,i}u_i \leq \sum_k \alpha_{k,i}U_i$, we get
    $$
u_n'(t) \geq \ul \sigma f\left( \ul \alpha u_n(t)\right) - u_n(t)\left(\sum_i \beta_{i,n}S_i^0 f\left(\sum_k \alpha_{k,i}U_k\right) \right) - u_n(t)\left(\sum_i \beta_{i,n}I_i^0\right) + I_n^0.
$$
Now, let $\e\in ]0,1[$ be fixed, and let $\eta>0$ be such that $f(x)\geq (1-\eta)x$ for $x\in [0,\e]$.

Owing to \eqref{cont stif}, we have, for $d(n)$ large enough, $\ul \alpha u_n(t)\leq \e$, hence, for such $n$, we have
    $$
u_n'(t) \geq \ul \sigma (1-\eta) \ul \alpha u_n(t) - u_n(t)\left(\sum_i \beta_{i,n}S_i^0 f\left(\sum_k \alpha_{k,i}U_k\right) \right) - u_n(t)\left(\sum_i \beta_{i,n}I_i^0\right) + I_n^0.
$$
Now, observe that, for each $n\in \N$, we have (using $f(x)\leq x$)
$$
\sum_i \beta_{i,n}S_i^0 f\left(\sum_k \alpha_{k,i}U_k\right) \leq \sum_i \beta_{i,n}S_i^0 \sum_k \alpha_{k,i}U_k,
$$
and because $\beta_{i,n}\neq 0$ only if $d(i,n)\leq R_\beta$ and $\alpha_{k,i}\neq 0$ only when $d(k,i)\leq1$, we get that (denoting $K = \max_{n} \#\{i \ : \ d(i,n)\leq R_\beta\}$)
$$
\sum_i \beta_{i,n}S_i^0 f\left(\sum_k \alpha_{k,i}U_k\right) \leq \ol \beta \ol \alpha \ol \sigma K V\left(\max_{k \ :\ d(k,n) \leq R_\beta + 1} U_k\right).
$$
For $d(n)$ large enough, owing to \eqref{cont stif}, we have $\ol \beta \ol \alpha \ol \sigma K V \left(\max_{k \ :\ d(k,n) \leq R_\beta + 1} U_k\right)\leq \frac{\ul \sigma (1-\eta)\ul \alpha}{2}$. 

Then for $d(n)$ large enough, we get (using also that $\left(\sum_i \beta_{i,n}I_i^0\right)+I_n^0=0$ for $d(n)$ large enough)
$$
u_n'(t) \geq \frac{\ul \sigma(1-\eta)\ul \alpha}{2} u_n(t),
$$
Hence, for such $n$ we would get $u_n(t)\to+\infty$ when $t\to+\infty$, which is a contradiction with the boundedness of $u_n$ (given in Proposition \ref{prop u}).
\end{proof}

\begin{remark}
    Observe that we used crucially the hypothesis that $\beta_{i,n}=0$ when $d(i,n)$ is large in the proof. Without this hypothesis, we believe that Theorem \ref{th spread non loc} could be false, in the sense that this could prevent the opinion from spreading through the network.
\end{remark}

\begin{proof}[Proof of Theorem \ref{th spread non loc}]
    The convergences of $I_n,S_n$ work similarly as in the proof of Theorem \ref{th cv}, using Proposition \ref{prop u} (which works for solutions of the system with non-local stifling \eqref{syst}).\\

    The fact that $\liminf_{d(n)\to+\infty} S_n^\infty<S_n^0$ is also a direct consequence of Proposition \ref{prop non loc} with \eqref{Su}, which states that $S_n^\infty = S_n^0e^{-\sum_k \alpha_{k,n}U_k}$.
\end{proof}

\subsection{Emergence of patterns}

We now present some simulations that suggest that the non-local stifling can induce patterns, that is, it can make the equilibria of the system switch from a homogeneous state (the opinion is adopted everywhere with the same rate) to an heterogeneous state, where some nodes adopt the opinion more than others.\\

We consider in our simulations the case where the graph $G$ is simply a path graph (linear graph) with $500$ points, that is, the nodes are $0,1,\ldots,499$ and the node $i$ is connected to $i+1$ and $i-1$ (except for $i=0$ and $i=499$ which are connected only to $i=1$ and $i=498$). Remember that, by convention, the nodes are also connected to themselves in our model.\\

We consider the system with non-local stifling, under the homogeneity hypotheses of Remark \ref{conj hom}. We take 
$$
S^0 = 1, \quad a = 4,\quad b = 10
$$
and
$$
 I_n^0 = 0 \ \text{for }  n\neq0\ \text{and}\ I_0^0 = 0.001 .
$$
We take three different values for $R_\beta$: $R_\beta =0$ (local stifling), $R_\beta= 6$ and $R_\beta = 12$.\\

The simulations show $6$ snapshots of the evolution, where we see the populations of ignorants and spreaders. The ignorants form a front while the spreaders form a pulse.

In the first two simulations, the number of ignorants converge to a homogeneous state, where it is almost equal to $0.8$ at least far away from the node $n=0$. In the second simulation, where $R_\beta=6$, there are some oscillations near the node $n=0$, but they are damped.

 In the third simulation (with $R_\beta = 12$), the oscillations do not disappear, a pattern appears.

\begin{figure}[H]
\begin{center}
    \includegraphics[scale=0.4]{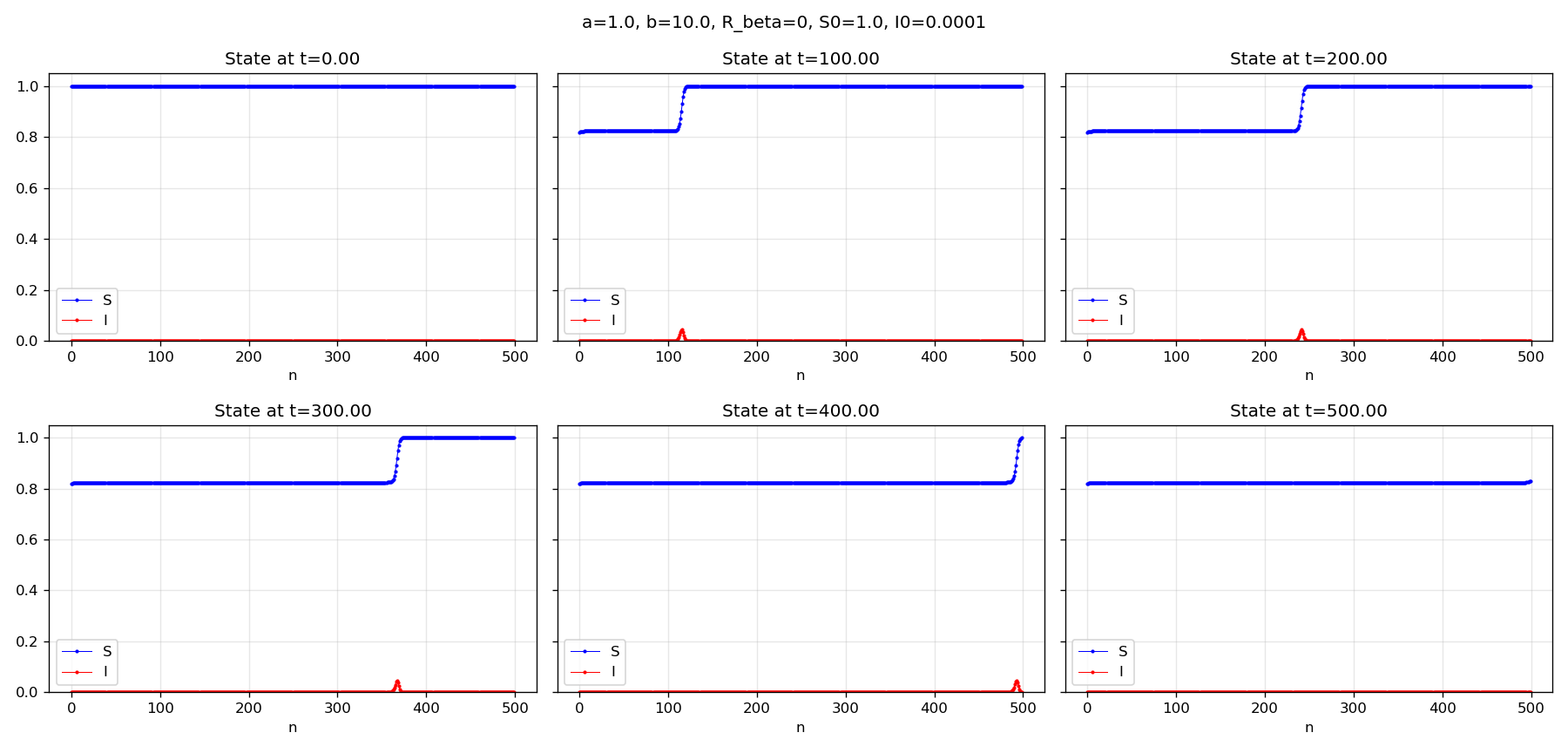}
    \caption{Local stifling, no patterns.}
\end{center}
\end{figure}

\begin{figure}[H]
\begin{center}
    \includegraphics[scale=0.4]{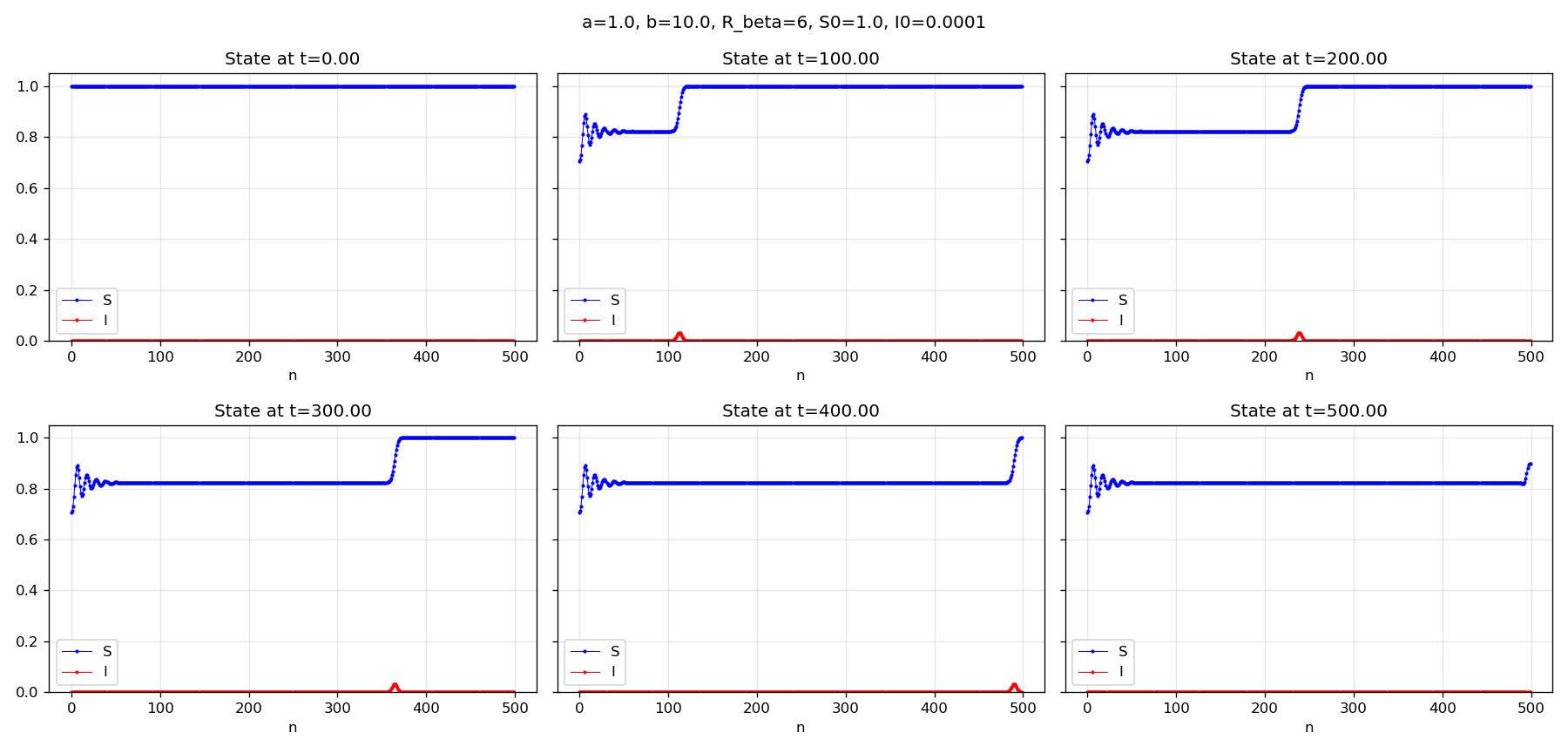}
    \caption{Non-local stifling, no patterns.}
\end{center}
\end{figure}

\begin{figure}[H]
\begin{center}
    \includegraphics[scale=0.4]{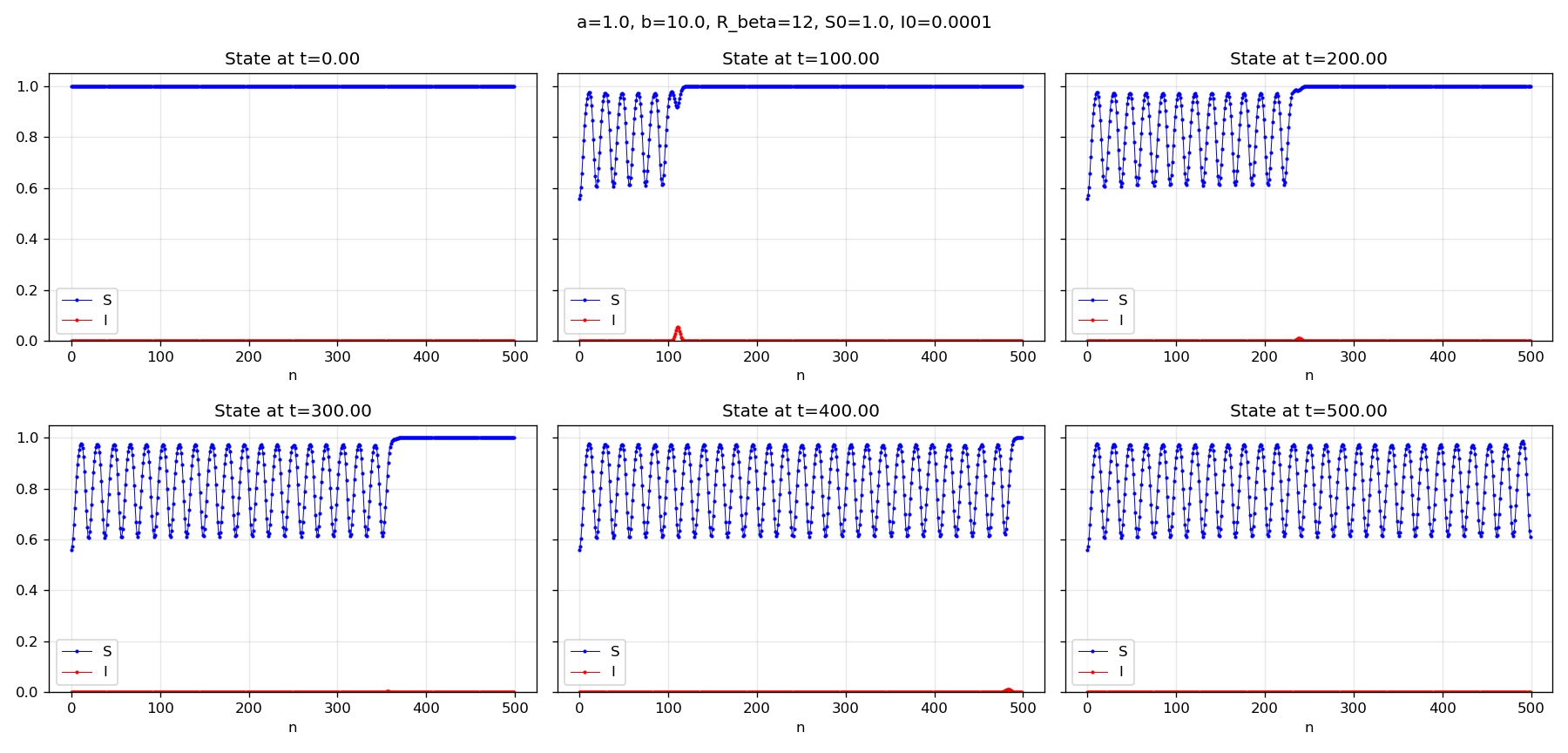}
    \caption{Non-local stifling, emergence of patterns.}
\end{center}
\end{figure}

Let us try to give an heuristic explanation of this phenomenon. As observed in the simulations, the spreaders propagate from left to right by forming a pulse. Suppose that, at a given time $T>0$, there is a rightmost node where the opinion is adopted, let us call it $n$. At this node, we would have $S_n(T)<S_n^0-\eta$ and $I_n(T)>\eta$ say. Let $i>n$ be such that $1 \ll d(i,n)<R_\beta$. The node $i$ is ``far away'' from $n$, so the opinion can not reach node $i$ soon: we can guarantee that $S_i(t)\approx S_i^0$ for some times after time $T$, but the stifling at node $i$ already feels the influence of node $n$ at time $T$. The stifling term at node $i$, that is $\sum_{k} \beta_{k,i}(I_k+R_k)$, could be large even if the opinion has not reached the node $i$. This would block the propagation at node $i$, and at the nodes close to $i$. However, as Theorem \ref{th spread non loc} shows, the opinion can not be blocked, at least not on zones that are ``too large''. This would lead to the formation of alternating zones where the opinion is adopted and others where it is ignored.\\

This is reminiscent of the phenomenon of Turing patterns. Turing patterns appear in systems with two species, one which is an inhibitor and one which is an activator. The activator creates other activators and inhibitors, while the inhibitors hinder the production of activators. Then, if the activator diffuses more slowly than the inhibitor, the system could give rise to patterns. Such phenomenon can be seen in chemical reactions, and is widely used to explain the emergence of patterns in cellular dynamics for instance. We refer to the book \cite{murray2011mathematical} and to the original paper of Turing, \cite{turing1990chemical}. The phenomenon here is more complex however: the mechanism behind the apparition of Turing patterns is the fact that diffusion can make homogeneous states unstable. Here, a linear analysis shows that homogeneous states are always stable. The patterns are created by the propagation itself.
\\

The fact that long-range stifling leads to patterns in our model is of course not restricted to path graphs. For instance, let us show the same simulation as the one presented in Figure $1$ in the introduction, the propagation of the opinion on a Watts-Strogatz graph, but with non-local stifling. We still observe that the final state is not homogeneous.

\begin{figure}[H]
\centering

% -------- Row 1 --------
\begin{minipage}{0.48\textwidth}
\centering
\includegraphics[width=\linewidth]{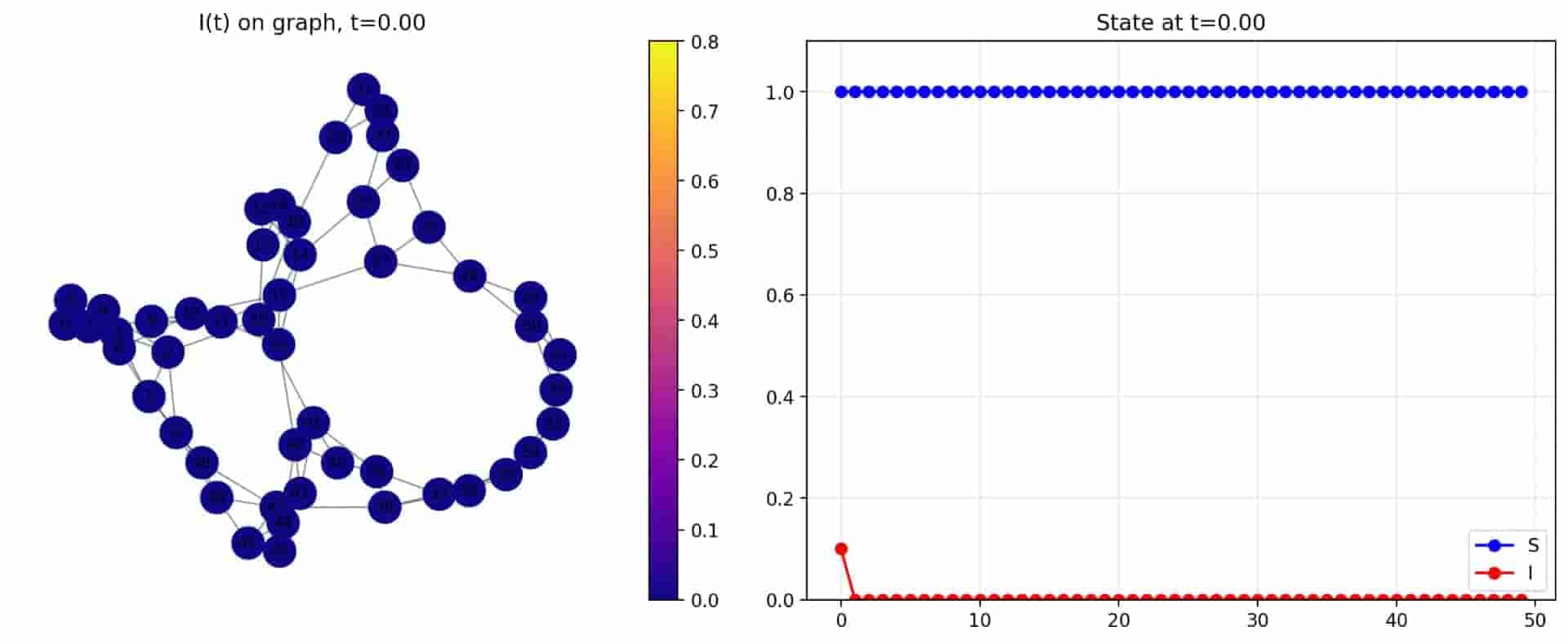}
\end{minipage}
\hfill
\begin{minipage}{0.48\textwidth}
\centering
\includegraphics[width=\linewidth]{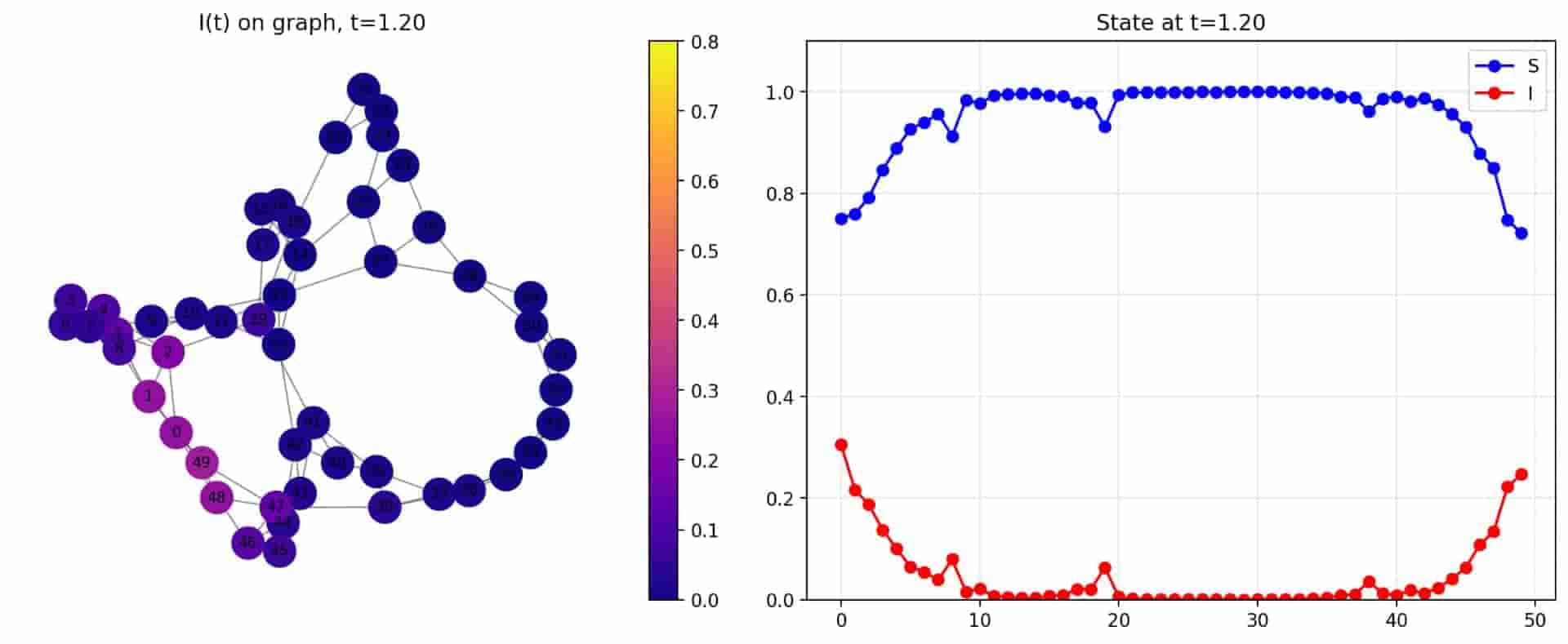}
\end{minipage}

\vspace{0.4cm}

% -------- Row 2 --------
\begin{minipage}{0.48\textwidth}
\centering
\includegraphics[width=\linewidth]{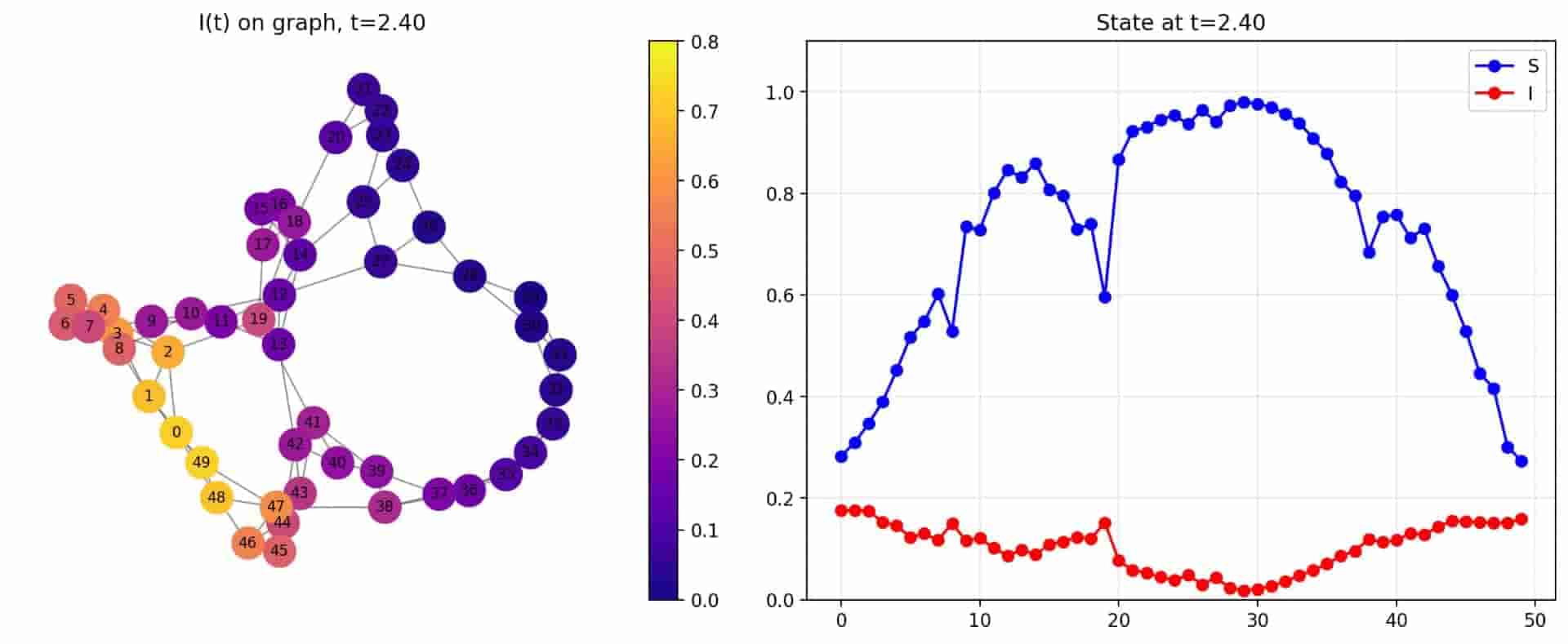}
\end{minipage}
\hfill
\begin{minipage}{0.48\textwidth}
\centering
\includegraphics[width=\linewidth]{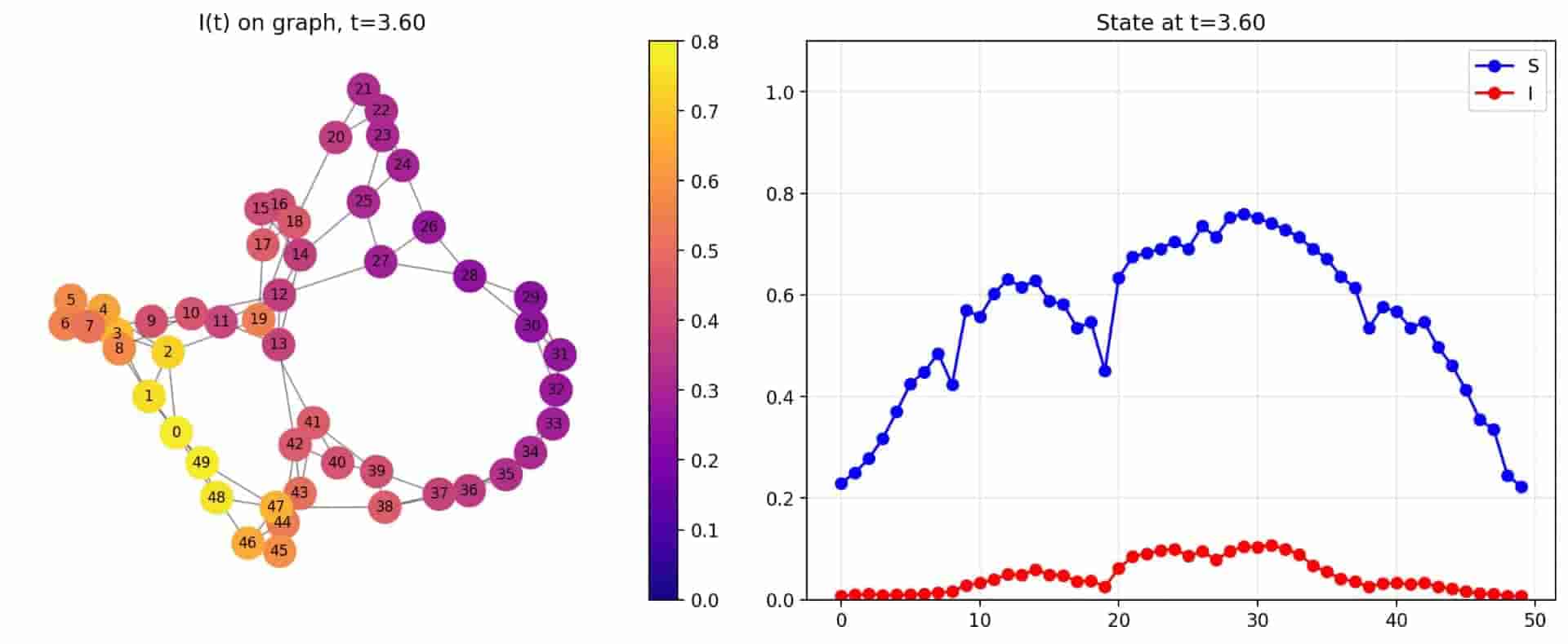}
\end{minipage}

\vspace{0.4cm}

% -------- Row 3 (centered last image) --------
\begin{minipage}{0.48\textwidth}
\centering
\includegraphics[width=\linewidth]{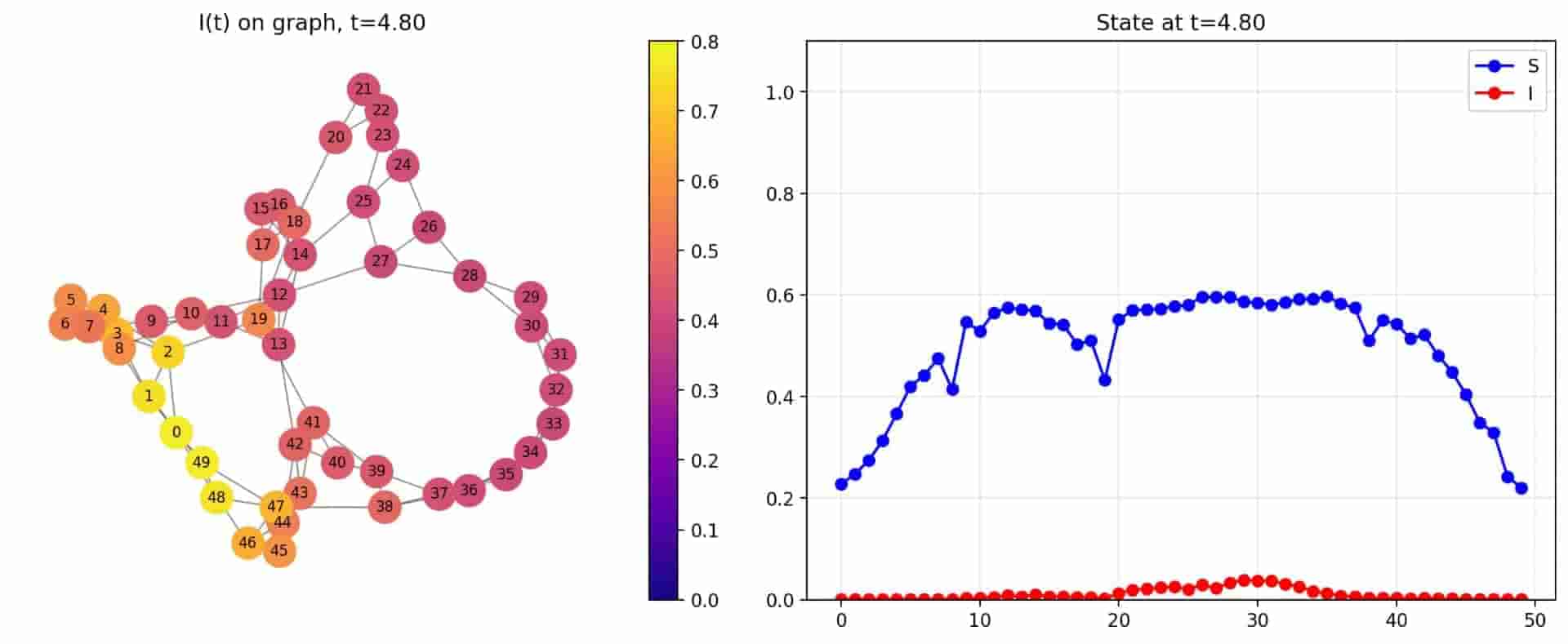}
\end{minipage}
\hfill
\begin{minipage}{0.48\textwidth}
\centering
\includegraphics[width=\linewidth]{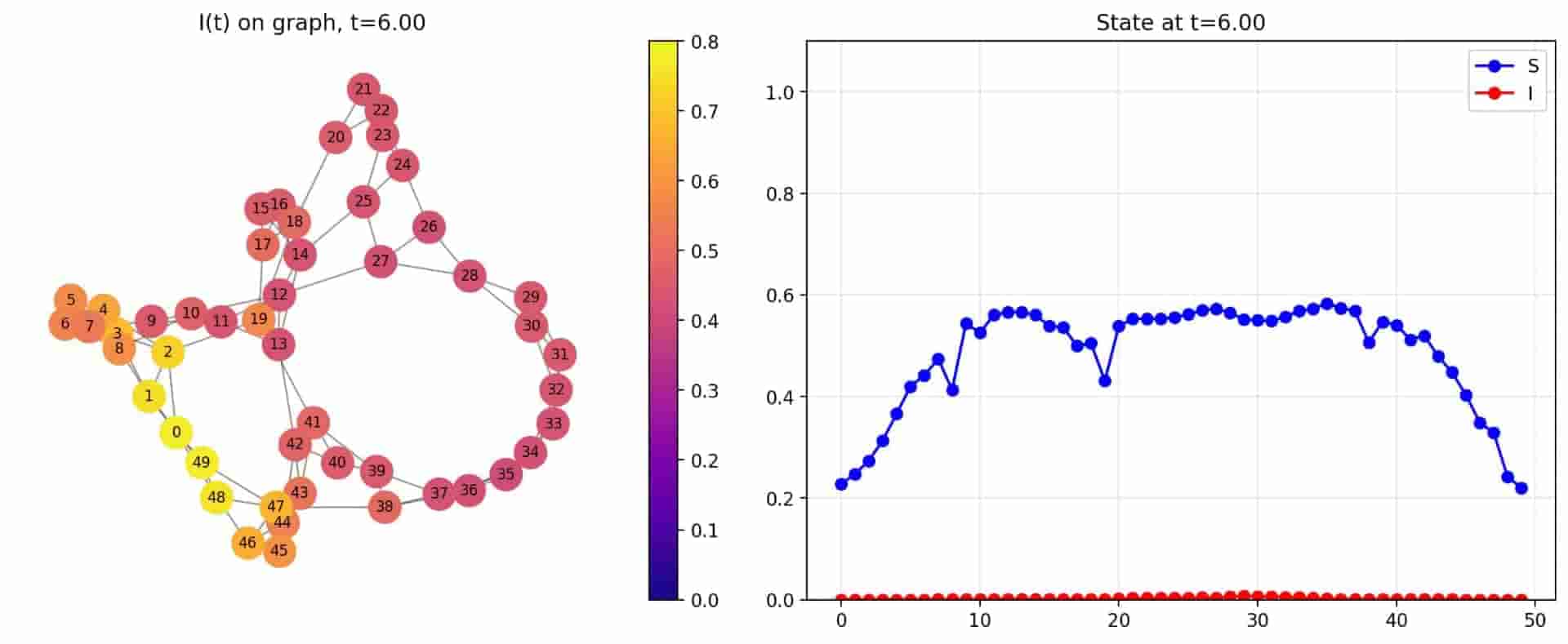}
\end{minipage}

\caption{Propagation of the opinion on a Watts-Strogatz graph. Non-local stifling.}
\end{figure}

\textbf{Acknowledgments.} This study contributes to the IdEx Université de Paris ANR-18-IDEX-0001. The research leading to these results has received funding from the ANR project “ReaCh” (ANR-23-CE40-0023-01).

\bibliographystyle{siam}
\bibliography{biblio}

\end{document}